\newcommand\R{{\mathbb R}}
\renewcommand{\Re}{\mathrm{Re}}
\theoremstyle{plain}
\newcommand{\DEQS}{\begin{eqnarray*}}
\newcommand{\EEQS}{\end{eqnarray*}}
\newcommand{\DEQSZ}{\begin{eqnarray}}
\newcommand{\EEQSZ}{\end{eqnarray}}
\newcommand{\RR}{{\mathbb{R}}}
\newcommand{\EE}{{\mathbb{E}}}
\newcommand{\CC}{\mathbb{C}}
\def\d{\text{\rm{d}}}
\def\i{\text{\rm{i}}}
\newtheorem{theorem}{Theorem}[section]
\newtheorem{proposition}[theorem]{Proposition}
\newtheorem{assumption}[theorem]{Assumption}
\newtheorem{definition}[theorem]{Definition}
\newtheorem{lemma}[theorem]{Lemma}
\newtheorem{remark}{Remark}[section]
\newtheorem{acknowledgements}{Acknowledgements}
\newtheorem{corollary}{Corollary}[section]
\newcommand\del[1]{}
\newcommand\E{{\mathbb E}}
\begin{document}

\baselineskip 15.35pt
\numberwithin{equation}{section}
\title[Stochastic nonlinear Schr\"{o}dinger equations]
{{The stochastic Strichartz estimates and stochastic nonlinear
 Schr\"{o}dinger equations driven by L\'evy noise$^\dagger$}
 \footnote{$\dagger$ This work is supported in part by NSFC (12071433, 11822106, 11831014, 12090011, 11501509) and PAPD of Jiangsu Higher Education Institutions.}}
\author[Z. Brze\'{z}niak, W. Liu and J. Zhu]{}

\begin{center}
\begin{minipage}{145mm}

	\date{today}

\end{minipage}
\end{center}

\maketitle	

\newcommand\blfootnote[1]{%
\begingroup
\renewcommand\thefootnote{}\footnote{#1}%
\addtocounter{footnote}{-1}%
\endgroup
}

\centerline{{\scshape Zdzis{\l}aw Brze\'{z}niak$^a$, Wei Liu$^{b,c}$ and  Jiahui Zhu$^{d,*}$}\blfootnote{* Corresponding author: jiahuizhu@zjut.edu.cn}}
\medskip
 {\footnotesize
\centerline{ $a.$   Department of Mathematics, University of York,  York YO10 5DD, UK}
 \centerline{ $b.$ School of Mathematics and Statistics, Jiangsu Normal University, Xuzhou 221116, China}
  \centerline{$c.$ RIMS, Jiangsu Normal University, Xuzhou 221116, China}
 \centerline{  $d.$  School of Science, Zhejiang University of Technology, Hangzhou 310019, China }
 }

 \begin{abstract}
 We establish a new version of the stochastic Strichartz estimate for the stochastic convolution driven by jump noise which we apply
to the stochastic nonlinear Schr\"{o}dinger equation with nonlinear multiplicative jump noise in the Marcus canonical form.  With the help of the deterministic Strichartz estimates, we prove the existence and uniqueness of a global solution to stochastic nonlinear Schr\"{o}dinger equation in $L^2(\RR^n)$ with either focusing or defocusing nonlinearity in the full subcritical range of exponents as in the deterministic case.

\noindent \textbf{Keywords:}  Nonlinear Schr\"{o}dinger equation, Stochastic Strichartz estimate, L\'evy noise, Marcus canonical form.\\

\noindent \textbf{AMS Subject Classification.}  60H15; 60J75; 35B65; 46B09
 \end{abstract}


\section{Introduction}

In this paper, we study the following stochastic nonlinear Schr\"{o}dinger equation in the Marcus form in $L^{2}(\RR^{n})$
\begin{align}
\begin{split}\label{SNLSE-Marcus}
    \d u(t)&={\rm{i}}[\Delta u(t)-\lambda |u(t)|^{2\sigma}u(t)]\d t-\i\sum_{j=1}^mg_j(u(t-))\diamond \d L_j(t), \;\; t>0, \\
    u(0)&=u_{0},
    \end{split}
\end{align}
where $\lambda\in\mathbb{R}$, $ \sigma>0$ and $g_j:\mathbb{C}\rightarrow \mathbb{C}$ are measurable functions satisfying some additional conditions  and  $L(t)=(L_1(t),\cdots,L_m(t))$ is an $\mathbb{R}^m$-valued pure jump L\'{e}vy process.

The nonlinear Schr\"odinger equation (NLS) is a fundamental model for describing wave propagation which appears in various fields such as nonlinear optics, nonlinear water propagation, quantum physics, Bose-Einstein condensate, plasma physics and molecular biology. The global existence results of deterministic NLS are essentially obtained by a fixed point argument in a suitable mixed space along with Strichartz estimates and conservation laws. In a wide range of physical and engineering models it should be more appropriate to incorporate some types of random perturbations  which may be caused by the influence of thermal fluctuations or inhomogeneous media etc. In \cite{Bou+Deb1999} de Bouard and Debussche studied the existence and uniqueness of global $L^{2}(\mathbb{R}^{n})$-valued solutions to the stochastic NLS with linear multiplicative  Stratonovich noise. In the subsequent paper \cite{Bou+Deb2003}, they proved the global existence and uniqueness of solutions in the case of Stratonovich noise with paths in $H^{1,2}(\mathbb{R}^{n})$. Brze\'zniak and Millet in \cite{Brz+Mil2014} established a general version of Strichartz estimates for stochastic convolution and proved the global existence and uniqueness to the stochastic NLS with nonlinear Stratonovich noise in $H^{1,2}(M)$ for a two-dimensional compact Riemannian manifold $M$ which generalized the results of \cite{Burq+Gerard2004,Bou+Deb1999,Bou+Deb2003}. Based on the stochastic Strichartz estimate established in \cite{Brz+Mil2014}, Horhung in \cite{Hornung2018} studied the existence and uniqueness of solutions to the stochastic NLS with nonlinear Stratonovich noise in $L^{2}(\RR^{n})$ for subcritical and critical nonlinearities. By using a modified Faedo-Galerkin method,  Brze\'{z}niak et al. \cite{Brz+Hor+Weis2019} constructed a martingale solution for a stochastic NLS with multiplicative Wiener noise in an abstract framework and showed the pathwise uniqueness for the case of 2D compact manifolds with bounded geometry by means of the Strichartz estimates. The work \cite{Brz+Hor+Weis2019} was extended by  Horhung in \cite{Hornung2019} to  the non-compact manifolds case.
Barbu et al. \cite{Barbu+Rockner+Zhang2014,Barbu+Rockner+Zhang2017} proved the global well-posedness result of stochastic NLS with linear multiplicative Wiener noise via the rescaling approach and an application of the Strichartz estimates. See also \cite{Barbu+Rockner+Zhang2018,Herr+Rockner+Zhang2019,HW} and the references therein for some further studies on this topic.

Concerning the study of stochastic NLS perturbed by Gaussian noise,  the Stratonovich form has its merits as the conservation law is still preserved by the solution of the equation. Moreover, the Stratonovich form  has another two important features. The first one is that the Stratonovich form  obeys the classical rules of differentiation as in ordinary calculus. The second feature is that it is consistent with Wong-Zakai type approximation which is important from a modeling point of view. However, these remarkable properties of Stratonovich form are violated if  the driving process includes jumps because the higher order integrals do not vanish in the jump  noise case.  Marcus in \cite{Marcus1978,Marcus1981} introduced a new type of integral which pertains the same preferable properties as that of Stratonovich integral in the continuous case. We are thus motivated to study stochastic NLS of the Marcus canonical form \eqref{SNLSE-Marcus}. One most delightful property of the Marcus canonical form in \eqref{SNLSE-Marcus} is that it allows the equation to preserve the $L^{2}(\RR^{n})$-norm of the solution. This ensures the non-blow-up of the solution in finite time.

We study the global existence for two important types of stochastic NLS equations  for  the nonlinear term $\lambda |u(t)|^{2\sigma}u(t)$,  $\sigma\in(0,\frac2n)$,  which is either defocusing if  $\lambda>0$, or  focusing if  $\lambda<0$. In a forthcoming paper \cite{Brz+Zhu_2020} we will investigate related questions for a larger class of defocusing type nonlinearities.
 We prove a general version of Strichartz estimate for stochastic convolution driven by compensated Poisson random measures (see Proposition \ref{prop-str-ineq-11}). This Strichartz estimate for jump noise is new and novel. Compared to papers by de Bouard and Debussche \cite{Bou+Deb1999,Bou+Deb2003}, our Strichartz estimate holds for arbitrary admissible pair $(p,r)$ and hence avoids the additional restriction $0<\sigma<\frac1{n+1}$ for $n\geq 3$ on $\sigma$.  On the basis of this stochastic Strichartz estimate, our global existence and uniqueness result is established in the full range of subcritical case which is consistent with the deterministic NLS in $L^2$, see e.g. \cite{Linares+Ponce}.
 This general version of the Strichartz estimate also allows us to obtain the existence and uniqueness results to the stochastic NLS with nonlinear noise which differs from the linear noise results from \cite{Brz+Hor+Manna} and \cite{Bou+Hau2019}. As the nonlinear term is not Lipschitz, we shall truncate the nonlinear term and combine the Strichartz estimates for the approximate solution to construct a solution of the truncated problem by using a fixed point argument in the space $ D(0,T;L^2(\RR^n))\cap L^p(0,T;L^r(\RR^n))$. A novel aspect of our equation \eqref{SNLSE-Marcus} in the Marcus canonical form is the conservation of $L^{2}(\RR^{n})$-norm and the global existence is a consequence of the $L^{2}(\RR^{n})$-norm-preserving condition. We emphasize that our stochastic Strichartz estimates are useful not only for equations in the Marcus canonical form. Indeed, they can be used to study stochastic NLS in the It\^o form. The advantage of the Marcus canonical form is twofold. Firstly, the solutions to such equations preserve a.s.  the $L^2$-norm. Secondly, they are believed to be robust with respect to the approximation of the noise.
In the case of the gaussian noise, related research is contained in recent papers \cite{Brz+Man+Mukh2019},  \cite{Fan+Xu_2019},  \cite{Gu+Tsai_2019} and \cite{Man+Mukh+Panda2019}.

In spite of quite a number of contributions on stochastic NLS with the Gaussian noise, the theory is much less well-developed in the case where the driving noise has jumps. Recently de Bouard and Hausenblas in \cite{Bou+Hau2019} studied the existence of martingale solutions of the stochastic NLS driven by a L\'evy-type noise but they considered the case of focusing nonlinearity and linear noise in $H^{1,2}(\RR^{n})$.  Brze\'zniak et al. in \cite{Brz+Hor+Manna} constructed a martingale solution of the stochastic NLS with a multiplicative jump noise by using a variant of the Faedo-Galerkin method. Comparing our work with \cite{Brz+Hor+Manna},  we would like to point out three main differences. Firstly, the current paper establishes the stochastic Strichartz estimates for the stochastic NLS driven by Poisson random measures and use them to prove  the existence and uniqueness of strong solutions to stochastic nonlinear Schr{\"o}dinger equation,  while the other paper proves the existence (but  not the uniqueness) of  weak martingale solutions to stochastic NLS using the compactness method and the generalization of the Skorokhod-Jakubowski theorem from \cite{Brz+Haus+Raza_2018} and \cite{Motyl_2012}. Secondly, the current paper deals with  solutions  with  the initial data from  the $L^2$ space, while the other paper deals with solutions with initial data belongs to the energy space $H^1$.  Thirdly, the current paper is setup in the whole Euclidean space while the other paper is on a compact Riemannian manifold. The common feature of both papers is the use of stochastic equations with respect to L\'evy noise in the Marcus canonical form, which is in comparison with \cite{Bou+Hau2019}. An important consequence of this is that the solutions constructed in both papers have their mass, i.e. the $L^2$-norm, a.s. preserved. Let us point out that Brze\'{z}niak and Manna \cite{Brz+Man2019} recently proved the existence of weak martingale solutions for stochastic Landau-Lifshitz-Gilbert equations with pure jump noise also in the Marcus canonical form.

We believe that the study of the stochastic Strichartz estimate in this paper may generate new insights about SPDEs with jump noise in the future. For instance, recently the authors \cite[Proposition 3.4]{Br+Liu+Zhu-SPL-2019} proved the boundedness of stochastic convolution of Stokes operator in $L^{4}(0,T;L^{4}(D))$ in a very similar spirit and established the existence and uniqueness of solutions of 2D stochastic Navier-Stokes equation in certain Sobolev spaces of negative order. See also \cite{Br+Liu+Zhu2019} for similar results of stochastic quasi-geostrophic equations.  We also expect that our investigation of Marcus canonical stochastic NLS equation may shed some lights on the analysis  of stochastic partial differential equations perturbed by jump noise.

Let us formulate our main result of this paper.
          \begin{theorem}\label{main-theo}Let $p\geq 2$, $0<\sigma<\frac{2}{n}$, $r=2\sigma+2$  such that  $(p,r)$ is an admissible pair,  i.e. the conditions (\ref{eqn-admissible pair-01}-\ref{eqn-admissible pair-02}) below are satisfied, and $u_0\in L^p(\Omega,\mathcal{F}_{0};L^2(\RR^n))$.
          Under Assumption \ref{assu-1}, there exists a unique global mild solution  $u=(u(t))$, $t\in[0,T]$ of \eqref{SNLSE-Marcus} such that $$u\in L^{p}\big(\Omega; L^{\infty}(0,T;L^2(\RR^n))\cap L^p(0,T;L^r(\RR^n))\big),$$
          and $u(\cdot,\omega)\in D(0,T;L^2(\RR^n))$ for $\mathbb{P}$-a.a. $\omega\in\Omega$. Moreover, we have for all $t\in[0,T]$,
           $$\|u(t)\|_{L^2(\RR^n)}=\|u_0\|_{L^2(\RR^n)},\quad\mathbb{P}\text{-a.s.} $$
          \end{theorem}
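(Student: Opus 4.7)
The plan is to combine the stochastic Strichartz estimate of Proposition \ref{prop-str-ineq-11} with the classical deterministic Strichartz estimates in a Banach fixed-point argument, and then to promote the resulting local solution to a global one by exploiting the mass-preservation property intrinsic to the Marcus canonical form. Writing the Schr\"odinger group as $U(t) = e^{\i t \Delta}$, the mild formulation of \eqref{SNLSE-Marcus} reads
\begin{equation*}
u(t) = U(t) u_0 - \i \lambda \int_0^t U(t-s) |u(s)|^{2\sigma} u(s)\,\d s - \i \sum_{j=1}^m \int_0^t U(t-s) g_j(u(s-)) \diamond \d L_j(s),
\end{equation*}
and I seek a fixed point among progressively measurable c\`adl\`ag processes in the Banach space $\mathcal{X}_{T_0} := L^{p}(\Omega; L^{\infty}(0,T_0;L^{2}(\RR^n)) \cap L^{p}(0,T_0;L^{r}(\RR^n)))$.

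Because $|u|^{2\sigma} u$ is only locally Lipschitz on $L^r$, I first localize the nonlinearity by a smooth cutoff $\theta_R(\|u\|_{L^p(0,t;L^r)})$. The deterministic inhomogeneous Strichartz estimates for the admissible pair $(p,r)$ with $r = 2\sigma + 2$ bound the nonlinear Duhamel term in $L^{p}(0,T_{0};L^{r})$ by $C T_0^{\alpha} R^{2\sigma} \|u\|_{L^p(0,T_0;L^r)}$ with $\alpha = 1 - n\sigma/2 > 0$, which is precisely where the subcritical hypothesis $\sigma < 2/n$ enters. Proposition \ref{prop-str-ineq-11}, together with the Lipschitz and linear growth bounds on $g_j$ built into Assumption \ref{assu-1}, supplies the matching estimate for the Marcus stochastic convolution in the $\mathcal{X}_{T_0}$ norm. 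For $T_0$ small enough, depending only on $R$ and $\|u_0\|_{L^p(\Omega;L^2)}$, the Picard map is a contraction and yields a unique truncated solution $u_R \in \mathcal{X}_{T_0}$.

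To remove the truncation and reach the full horizon $T$, I exploit the defining feature of the Marcus formulation. Applying the Marcus-It\^o formula to $\|u_R\|_{L^{2}}^{2}$ (justified by regularizing $\Delta$ through its Yosida approximation and passing to the limit) makes every contribution vanish: the Laplacian is skew-adjoint on $L^{2}$, the phase modulation $-\i |u|^{2\sigma} u$ is orthogonal to $u$ pointwise, and the Marcus flow generated by each $-\i g_j$ preserves $|u|^2$ pointwise by construction. Hence $\|u_R(t)\|_{L^{2}} = \|u_{0}\|_{L^{2}}$ almost surely on $[0,T_0]$. Since the local existence time depends on the initial data only through $\|u_{0}\|_{L^{2}}$, one can iterate the construction on a finite deterministic partition of $[0,T]$ of mesh size fixed by the conserved mass, then let $R \to \infty$ to stitch together a unique global mild solution with the required integrability. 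Uniqueness on each subinterval immediately propagates to global uniqueness.

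The principal obstacle I anticipate is the stochastic Strichartz bound for the Marcus-form convolution itself: rewriting the Marcus integral as an It\^o integral against the compensated Poisson random measure produces a jump-size-dependent nonlinear drift correction, and one must show that this correction is compatible with the mixed $L^{p}L^{r}$ framework without destroying the linear growth structure that drives the contraction. This is precisely the technical heart of Proposition \ref{prop-str-ineq-11}. A secondary but delicate point is the rigorous application of the Marcus-It\^o formula to mild solutions, which lack the regularity required for a direct computation; the standard remedy is a Yosida regularization of $\Delta$ accompanied by a careful passage to the limit. Once both ingredients are in place, the fixed-point and concatenation scheme runs parallel to the Gaussian case.
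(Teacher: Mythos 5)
Your overall architecture --- truncation of the nonlinearity, a Banach fixed point in $L^{p}(\Omega; L^{\infty}(0,T_0;L^{2})\cap L^{p}(0,T_0;L^{r}))$ driven by the deterministic Strichartz estimates and Proposition \ref{prop-str-ineq-11}, and mass conservation via a Yosida-regularized It\^o/Marcus computation --- coincides with the paper's. However, there is a genuine gap in your globalization step. You assert that ``the local existence time depends on the initial data only through $\|u_0\|_{L^2}$'' and that one can therefore iterate ``on a finite deterministic partition of $[0,T]$ of mesh size fixed by the conserved mass.'' This is the standard deterministic $L^2$-subcritical argument, but it does not transfer to the stochastic setting: mass conservation controls only the $L^{\infty}_t L^{2}_x$ component of the norm, whereas the truncation is removed through the stopping times $\tau_k=\inf\{t:\|u^{k}\|_{Y_t}>k\}$, whose definition involves the $L^{p}(0,t;L^{r})$ component as well. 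On each subinterval the Strichartz norm of the solution is bounded by $C\|u(t_0)\|_{L^2}$ \emph{plus} the $L^{p}(t_0,t;L^{r})$ norm of the stochastic convolution $\Psi_2^{k}(u^{k})$, which is a random variable with finite $p$-th moment but no almost sure bound. Consequently the admissible step size is random, a deterministic mesh fixed by $\|u_0\|_{L^2}$ cannot close the bootstrap, and ``letting $R\to\infty$'' is not justified without a bound on $\E\|u^{k}\|^{p}_{L^{p}(0,T;L^{r})}$ that is uniform in $k$.

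This uniform bound is precisely the content of Step 1 of the paper's proof of Theorem \ref{main-theo}, and it is the part your sketch omits. The paper introduces pathwise quantities $M_k(\omega)$ (built from the conserved mass and from $\|\Psi_2^{k}(u^{k})\|^{p}_{L^{p}(0,T;L^{r})}$, the latter estimated in expectation via Proposition \ref{prop-str-ineq-11} and norm conservation), defines random interval lengths $T_k(\omega)$ and an increasing family of stopping times $\sigma_k^{j}$ at which the smallness condition $C|\lambda|\,t^{(1-\frac{n\sigma}{2})p}(M_k^{t})^{2\sigma}\le \frac{1}{4\cdot 2^{2\sigma}}$ first fails, runs a continuity/convexity argument with the auxiliary function $f(x)=M+\frac{1}{4(2M)^{2\sigma}}x^{2\sigma+1}-x$ on each random interval, and sums over at most $[T/T_k]+1$ intervals; the resulting bound requires applying the stochastic Strichartz estimate with the two exponents $q=p$ and $q=p\theta$, $\theta=\frac{4\sigma}{(2-n\sigma)p}+1$. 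Only then does Chebyshev's inequality give $\mathbb{P}(\tau_k=T)\to 1$ and hence $\tau_\infty=T$ a.s. The paper itself flags that these stopping times are ``essential in the proof'' and distinguish its argument from the Gaussian-noise references; your proposal needs this ingredient (or a substitute for it) to be complete.
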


     The rest of the paper is organized as follows. In Section 2 we introduce some notations and assumptions and investigate the stochastic Strichartz estimate. Section 3 is devoted to studying the truncated equation and proving the local existence and uniqueness of \eqref{NSS-integral-form}. In the last section, we first prove that the $L^{2}$-norm of the solution in \eqref{SNLSE-Marcus} is conserved and get a uniform estimate of the solutions of the truncated equations in  $L^{p}(\Omega;L^p(0,T;L^r(\RR^n)))$. Finally, we establish the existence and uniqueness of global solutions.

\section{Notations and Strichartz estimates}
Let  $(\Omega,\mathcal{F},\mathbb{F},\mathbb{P})$, where $\mathbb{F} = (\mathcal{F}_t )_{t\geq 0}$, be a filtered probability space satisfying the usual hypothesis. Let $L(t)=(L_1(t),\cdots,L_m(t))$, $t\geq0$ be an $\mathbb{R}^m$-valued pure jump L\'{e}vy process with L\'{e}vy measure $\nu$, i.e. $L(t)=\int_0^t\int_{B}z\tilde{N}(\d s,\d z)$, where $B=\{z\in\mathbb{R}^m:0<|z|\leq 1\}$ and $N$ is a time homogeneous Poisson random measure on $\RR^{+}\times(\mathbb{R}^{m}-\{0\})$ with $\sigma$-finite intensity measure $\nu$ satisfying $\int_{B}|z|^{2}\nu(\d z)<\infty$.

Based on the definition of the Marcus canonical integral from \cite{Marcus1978,Marcus1981}, equation \eqref{SNLSE-Marcus} with the notation $\diamond$ is defined by
\begin{align}
\begin{split}\label{NSS-integral-form}
     \d u(t)=&\i[\Delta u(t)-\lambda |u(t)|^{2\sigma}u(t)]\d t+\int_B\Big[\Phi(z,u(t-))-u(t-)\Big]\tilde{N}(\d t,\d z)\\
     &+\int_B\Big[\Phi(z,u(t))-u(t)+\i\sum_{j=1}^mz_jg_j(u(t))\Big]\nu(\d z)\d t,\quad t>0,
     \end{split}
\end{align}
with $\Phi(z,x)$ being the value  at time $t=1$ of the solution of the following equation
\begin{align}\label{phi-equation}
\frac{\partial \Phi}{\partial t}(t,z,x)=-\i\sum_{j=1}^mz_jg_j(\Phi(t,z,x)),\quad \Phi(0,z,x)=x.
\end{align}

We make the following assumptions.
\begin{assumption}\label{assu-1}
\begin{enumerate}
\item[(1)]  Let $T\in(0,\infty)$, $n\in\mathbb{N}$ and $(S_t)_{t\in\RR}$ denote the group of isometries on $L^{2}(\RR^{n})$ generated by the operator $\i\Delta$.
\item[(2)] For each $1\leq j\leq m$, there exists a function $\tilde{g}_{j}:[0,\infty)\rightarrow\RR$ of class $C^{1}$ such that $g_{j}$ is given by $g_j(y)=\tilde{g}_j(|y|^2)y$, $y\in\mathbb{C}$. We also assume there exist constants $L_{1},L_{2}>0$ such that  for all $x,y\in\mathbb{C}$
    \begin{align}
    \max_{1\leq j\le m}|g_j(x)-g_j(y)|&\leq L_{1}|x-y|,\label{Lip_of_g}\\
    \max_{1\leq j,k\leq m}|g_j'(x)g_k(x)-g_j'(y)g_k(y)|&\leq L_{2}|x-y|.\label{Lip_of_m_ji}
    \end{align}
\end{enumerate}
\end{assumption}
 \begin{remark}
 \begin{enumerate}
 \item[(1)]We identify below $\CC$ with $\RR^2$ and denote $\langle \cdot, \cdot\rangle$ (resp. $|\cdot|$)  the scalar product (resp. the Euclidean norm) in $\CC \cong \RR^2$.
      Let us introduce the linear operator $\mathcal{I}:\RR^2\ni(y_{1},y_{2})\mapsto (-y_{2},y_{1})\in\RR^2$
      and identify the operator of multiplication in $\mathbb{C}$ by the imaginary unit $\i$ with the operator $\mathcal{I}$ in $\mathbb{R}^2$. Given a function $g_i$ as in Assumption \ref{assu-1} we define $\phi_{j}(y):=(\i g_j)(y)=\tilde{g}_{j}(|y|^2)\mathcal{I}y \in \mathbb{R}^2$, for $y\in \mathbb{R}^2$. Then we have
 \begin{align*}
     [\phi'_j(y)](x)=2\tilde{g}'_j(|y|^2)\langle y,x\rangle \mathcal{I}y+\tilde{g}_j(|y|^2)\mathcal{I}x, \quad\text{for }x,y\in\mathbb{R}^{2}.
 \end{align*}
 From this it follows that
 \begin{align*}
        [(\i g_{j})'(y)](-\i g_k(y))&=2\tilde{g}'_j(|y|^2)\langle y,-\mathcal{I}g_k(y)\rangle \mathcal{I}y-\tilde{g}_j(|y|^2)\mathcal{I}^2g_k(y)\\
        &=-2\tilde{g}'_j(|y|^2)\tilde{g}_k(|y|^2)\langle y,\mathcal{I}y\rangle \mathcal{I}y+\tilde{g}_j(|y|^2)\tilde{g}_k(|y|^2)y\\
        &=\tilde{g}_j(|y|^2)\tilde{g}_k(|y|^2)y =m_{j,k}(y),
 \end{align*}
 where $m_{jk}(y)=\tilde{g}_j(|y|^2)\tilde{g}_k(|y|^2)y$. Thus the above condition \eqref{Lip_of_m_ji} is equivalent to
 \begin{align*}
&\max_{1\leq j,k\leq m}|m_{jk}(x)-m_{jk}(y)|\leq L_{2}|x-y|,\quad x,y\in \CC.
 \end{align*}
 For Assumption \ref{assu-1}$(2)$ to hold, it is sufficient that each $\tilde{g}_{j}\in C_{b}^{2}([0,\infty);\mathbb{R})$ and $\sup_{\theta>0}(1+\theta)|\tilde{g}_{j}'(\theta)|<\infty$, for $j=1,\cdots,m$.
\item[(2)] Under Assumption \ref{assu-1}, there exists a measurable mapping $\Phi:\mathbb{R}^+\times \mathbb{R}^m\times \mathbb{C}\rightarrow \mathbb{C}$ such that, for each $z\in\mathbb{R}^m$, $x\in\CC$, the function $t\mapsto \Phi(t,z,x)$ is continuously differentiable and solves \eqref{phi-equation}. In virtue of Lemma \ref{lem-stoch-lip}, the stochastic integral and Lebesgue integral in \eqref{NSS-integral-form} are well defined.
\end{enumerate}
    \end{remark}

We now state the following famous deterministic Strichartz estimates, we refer the reader to e.g. \cite[Theorem  2.3.3]{Caz2003}  for the proof. Let us first recall the definition of an admissible pair. We say a pair $(p,r)$ is admissible if
  \begin{align}\label{eqn-admissible pair-01}
  \frac2p=n\Big(\frac12-\frac1r\Big)
  \end{align}
  and
    \begin{align}\label{eqn-admissible pair-02}
  \begin{cases}
  2\leq r \leq \infty,\quad  &\mbox{if}  \quad n=1,\\
   2\leq r <\infty, \quad &\mbox{if} \quad n=2,\\
     2\leq r \leq \frac{2N}{N-2},\; & \mbox{if}\quad  n\geq 3.
  \end{cases}
  \end{align}
  Notice that $(\infty,2)$ and $(2,\frac{2n}{n-2})$, $n\geq3$ are always admissible.

\begin{proposition}\label{prop-strichartz-1}
Let $(p,r)$ and $(\gamma,\rho)$ be two admissible pairs and let $\gamma',\rho'$  be conjugates of $\gamma$ and $\rho$. Then
\begin{enumerate}
\item[(1)] for every $\phi\in L^2(\RR^n)$, the function $t\mapsto S_t\phi$ belongs to $L^p(\RR;L^r(\RR^n))\cap L^{\infty}(\RR;L^2(\RR^n))$ and there exists a constant $C$  such that
\begin{align}\label{eq-stri-1}
\|S_\cdot\phi\|_{L^p(\RR;L^r(\RR^n))}\leq C\|\phi\|_{L^2(\RR^n)}.
\end{align}
\item[(2)] Let $I$ be an interval of $\RR$ and $J=\bar{I}$ with $0\in J$. Then for every $f\in L^{\gamma'}(I;L^{\rho'}(\RR^n))$, the function $t\mapsto \Phi_f(t)=\int_0^tS_{t-s}f(s)\d s$ belongs to $L^p(I;L^r(\RR^n))\cap L^{\infty}(J;L^2(\RR^n))$ and there exists a constant $C$ independent of $I$  such that
\begin{align}
&\|\Phi_f\|_{L^{\infty}(J;L^2(\RR^n))}\leq C\|f\|_{L^{\gamma'}(I;L^{\rho'}(\RR^n))} ;\label{det-stri-ine-L2}\\
&\|\Phi_f\|_{L^p(I;L^r(\RR^n))}\leq C\|f\|_{L^{\gamma'}(I;L^{\rho'}(\RR^n))} .\label{det-stri-ine}
\end{align}
\end{enumerate}

\end{proposition}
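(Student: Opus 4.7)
The plan is to follow the classical route via the dispersive decay, a $TT^\ast$ argument, Hardy--Littlewood--Sobolev, and the Christ--Kiselev lemma for the off-diagonal case of (2). The starting point is the pointwise--in--time dispersive estimate for the free Schr\"odinger group: for $t\neq 0$,
\begin{equation*}
\|S_t\phi\|_{L^\infty(\RR^n)}\le C\,|t|^{-n/2}\|\phi\|_{L^1(\RR^n)},
\end{equation*}
which one derives from the explicit convolution kernel $(4\pi\i t)^{-n/2}e^{\i|x|^2/(4t)}$. Interpolating this with the $L^2$--isometry $\|S_t\phi\|_{L^2}=\|\phi\|_{L^2}$ via Riesz--Thorin yields
\begin{equation*}
\|S_t\phi\|_{L^r(\RR^n)}\le C\,|t|^{-n(1/2-1/r)}\|\phi\|_{L^{r'}(\RR^n)},\qquad 2\le r\le\infty,
\end{equation*}
and the exponent $n(1/2-1/r)$ equals $2/p$ precisely when $(p,r)$ is admissible in the sense of \eqref{eqn-admissible pair-01}.

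For part (1), define $T:L^2(\RR^n)\to L^p(\RR;L^r(\RR^n))$ by $T\phi(t)=S_t\phi$. The conservation of the $L^2$--norm gives the $L^\infty(\RR;L^2)$ bound immediately. For the $L^p_tL^r_x$ bound I compute the formal adjoint $T^\ast F=\int_\RR S_{-s}F(s)\,ds$, so that $TT^\ast F(t)=\int_\RR S_{t-s}F(s)\,ds$. Minkowski's inequality in $L^r_x$ combined with the interpolated dispersive estimate above reduces the bound on $TT^\ast$ to the Hardy--Littlewood--Sobolev inequality applied to the kernel $|t-s|^{-2/p}$ between $L^{p'}_t$ and $L^p_t$, which is valid in the non-endpoint range $p>2$ since $2/p\in(0,1)$ and $1/p'-1/p=1-2/p$. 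A standard $TT^\ast$ identity then gives $\|T\|^2=\|TT^\ast\|$, yielding \eqref{eq-stri-1}. The endpoint $p=2$, $r=2n/(n-2)$ for $n\ge 3$ is the Keel--Tao endpoint, which I would invoke from the literature rather than reprove.

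For part (2), the diagonal case $(\gamma,\rho)=(p,r)$ follows by exactly the same HLS argument applied to the truncated kernel $\mathbf 1_{s<t}|t-s|^{-2/p}$. The $L^\infty(J;L^2)$ bound \eqref{det-stri-ine-L2} is obtained by duality: testing $\Phi_f(t)$ against $\psi\in L^2$ gives $\langle \Phi_f(t),\psi\rangle=\int_0^t\langle f(s),S_{s-t}\psi\rangle\,ds$, and Cauchy--Schwarz in $L^{\rho'}_x\times L^\rho_x$ together with H\"older in time and \eqref{eq-stri-1} applied to $S_\cdot\psi$ yields $|\langle\Phi_f(t),\psi\rangle|\le C\|f\|_{L^{\gamma'}_tL^{\rho'}_x}\|\psi\|_{L^2}$, uniformly in $t$ and in $I$. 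Finally, the fully off-diagonal estimate \eqref{det-stri-ine} is deduced from its diagonal version through the Christ--Kiselev lemma, which upgrades an untruncated bilinear bound to its causal/retarded truncation whenever the time exponents satisfy $\gamma'<p$; the borderline $\gamma'=p$ follows directly from the $TT^\ast$ computation without needing Christ--Kiselev.

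The main technical obstacle is the endpoint admissible pair $(2,2n/(n-2))$ in dimension $n\ge 3$, for which the naive HLS argument breaks down because $2/p=1$ lies outside the HLS range; handling it genuinely requires the Keel--Tao dyadic decomposition, and I would simply cite it. A secondary care point is ensuring the constants in (2) are independent of the interval $I$, which is automatic because the kernel bound $|t-s|^{-2/p}$ and the HLS constant depend only on the exponents and not on the truncation of the time domain; the Christ--Kiselev step preserves this independence.
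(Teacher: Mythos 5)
The paper does not prove this proposition at all --- it is quoted as the classical deterministic Strichartz estimate with a citation to \cite[Theorem 2.3.3]{Caz2003} --- and your argument (dispersive decay, $TT^\ast$ plus Hardy--Littlewood--Sobolev, duality for the $L^\infty_tL^2_x$ bound, Christ--Kiselev for off-diagonal retarded estimates, Keel--Tao cited at the endpoint) is exactly the standard proof found in that reference and its modern variants, so it is correct and consistent with the paper's intent. The only gloss worth flagging is your remark that the borderline $\gamma'=p$ ``follows directly from the $TT^\ast$ computation'': since $\gamma'\le 2\le p$, that borderline forces the double endpoint $\gamma=p=2$, whose \emph{retarded} version is itself part of the Keel--Tao theorem rather than a consequence of the untruncated $TT^\ast$ identity --- harmless here, as the paper only ever uses non-endpoint pairs.
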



\begin{remark}\label{remark-stri-eq-1}
     Note that inequality \eqref{eq-stri-1}
      is  consistent with Assumption 3.1 in \cite{Brz+Mil2014}. Take $v\in L^{\infty}(0,T;L^2(\RR^n))$, by  \eqref{eq-stri-1} we have
     \begin{align*}
         \|S_{\cdot-s}1_{[s,T]}(\cdot)v_s\|_{L^p(0,T;L^r(\RR^n))}&=\Big(\int_s^T\|S_{t}S_{-s}v_s\|^p_{L^r(\RR^n)}\d t\Big)^{\frac1p}\\
      &   \leq \Big(\int_0^T\|S_{t}S_{-s}v_s\|^p_{L^r(\RR^n)}\d t\Big)^{\frac1p}
         \leq C\|v_s\|_{L^2(\RR^n)}.
     \end{align*}
     It follows that
     \begin{align*}
            \Big\| \int_0^t S_{t-s}v_s\,\d s\Big\|_{L^p(0,T;L^r(\RR^n))}&=\Big\| \int_0^T1_{[s,T]}(t)S_{t-s}v_s\,\d s\Big\|_{L^p(0,T;L^r(\RR^n))}\\
            &\leq \int_0^T\|1_{[s,T]}(t)S_{t-s}v_s \|_{L^p (0,T;L^r(\RR^n))}\d s \\
            &\leq  C  \int_0^T \|v_s\|_{L^2(\RR^n)}\d s\\
            &\leq T C\|v\|_{L^{\infty}(0,T;L^2(\RR^n))}.
     \end{align*}
    This inequality will play a key role later.

\end{remark}
    Throughout the paper, the symbol $C$ will denote a positive generic constant whose value may change from place to place. If a constant depends on some variable parameters, we will put them in subscripts.

Let $p,r\in[2,\infty)$ with $\frac{2}{p}=n(\frac12-\frac1r)$, that is $(p,r)$ is an admissible pair.
For $0\leq t_1\leq t_2$, let us denote $D(t_1,t_2;L^2(\mathbb{R}^n))$ the space of all right continuous functions with left-hand limits from $[t_{1},t_{2}]$ to $L^2(\mathbb{R}^n)$ and
\begin{align}
Y_{[t_1,t_2]}:=L^{\infty}(t_1,t_2;L^2(\mathbb{R}^n))\cap L^p(t_1,t_2;L^r(\mathbb{R}^n)).
\end{align}
Then $Y_{[t_{1},t_{2}]}$ is a Banach space with norm defined by
  \begin{align}
   \|u\|_{Y_{[t_{1},t_{2}]}}:=\sup_{s\in[t_{1},t_{2}]}\|u(s)\|_{L^2(\mathbb{R}^n)}+\Big(\int_{t_{1}}^{t_{2}}\|u(s)\|_{L^r(\mathbb{R}^n)}^p\d s\Big)^{\frac1p}.
  \end{align}
For the simplicity of notation, we write $Y_t$ instead of $Y_{[0,t]}$.   Notice that $Y_t$, $t\geq 0$ is a non-decreasing family of Banach spaces. That is if $0<s<t$ and $u\in Y_t$, then $u|_{[0,s]}\in Y_s$ and $\|u|_{[0,s]}\|_{Y_s}\leq \|u\|_{Y_{t}}$.
  Let $\tau>0$ be a stopping time. We call $\tau$ an accessible stopping time if there exists an increasing sequence $(\tau_n)_{n\in\mathbb{N}}$ of stopping times such that $\tau_{n}<\tau$ and $\tau_{n} \nearrow \tau$ $\mathbb{P}$-a.s. as $n\rightarrow\infty$ and we call $(\tau_n)_{n\in\mathbb{N}}$ an approximating sequence for $\tau$.

   Let $M^p_{\mathbb{F}}(Y_{\tau}):=L^{p}(\Omega; L^{\infty}(0,\tau;L^2(\RR^n))\cap L^p(0,\tau;L^r(\RR^n)))$ denote the space of all $L^{2}(\RR^{n})\cap L^{r}(\RR^{n})$-valued $\mathbb{F}$-progressively measurable processes $u:[0,T]\times \Omega \rightarrow L^2(\RR^n)\cap L^{r}(\RR^{n})$  satisfying
  \begin{align*}
  \|u\|^p_{M^p_{\mathbb{F}}(Y_{\tau})}:=\EE\|u\|^p_{Y_\tau}=\EE\Big(\sup_{s\in[0,\tau]}\|u(s)\|^{p}_{L^2(\mathbb{R}^n)}+\int_0^{\tau}\|u(s)\|_{L^r(\mathbb{R}^n)}^p\d s\Big)<\infty.
  \end{align*}

Now we introduce the definitions of local solutions and maximal local solutions, see e.g. \cite{Brz+Zhu2016} for more details.
  \begin{definition}
        A local mild solution to equation \eqref{NSS-integral-form} is an $\mathbb{F}$-progressively measurable process $u(t)$, $t\in[0,\tau)$, where $\tau$ is an accessible stopping time with an approximating sequence $(\tau_n)_{n\in\mathbb{N}}$ of stopping times such that for every $n\in\mathbb{N}$,
       \begin{enumerate}
        \item[(i)] $(u_t)_{t\in[0,\tau_{n}]}\in D(0,\tau_{n};L^{2}(\mathbb{R}^{n}))$, $\mathbb{P}$-a.s.;
\item[(ii)] $(u(t))_{t\in[0,\tau_{n}]}$ belongs to $M^p_{\mathbb{F}}(Y_{\tau_{n}})$;
\item[(iii)] for every $t\in [0,T]$, the following equality holds
        \begin{align*}
          u(t\wedge \tau_{n})=&S_{t\wedge\tau_{n}}u_0-\i\int_0^{t\wedge\tau_{n}}S_{t\wedge\tau_{n}-s}(\lambda |u(s)|^{2\sigma}u(s))\,\d s+I_{\tau_n}(\Phi(z,u)-u)(t\wedge\tau_{n})\nonumber\\
     &+\int_0^{t\wedge\tau_{n}}\int_BS_{t\wedge\tau_{n}-s}\Big[\Phi(z,u(s)))-u(s)+\i\sum_{j=1}^mz_jg_j(u(s))\Big]\nu(\d z)\d s, \;\;\mathbb{P}\text{-a.s. }
        \end{align*}
          where $I_{\tau_n}(\Phi(z,u)-u)$ is a process defined by
          \begin{align*}
         I_{\tau_n}(\Phi(z,u)-u)(t)= \int_0^t\int_B1_{[0,\tau_{n}]}S_{t-s}\Big[\Phi(z,u(s\wedge\tau_{n}-))-u(s\wedge\tau_{n}-)\Big]\tilde{N}(\d s,\d z).
          \end{align*}
        \end{enumerate}
        A local mild solution $(u(t))_{t\in[0,\tau)}$ is called unique, if for any other local mild solution $(v(t))_{t\in[0,\sigma) }$ of \eqref{NSS-integral-form}, we have
        \begin{align*}
            \mathbb{P}(u(t)=v(t),\;\;\forall t\in[0,\tau\wedge\sigma))=1.
        \end{align*}
         A local mild solution $u=(u(t))_{0\leq t<\tau}$ is called a maximal local mild solution if for any other local mild
   solution $(v(t))_{t\in[0,\sigma) }$
   satisfying $\sigma\geq \tau$ a.s. and $v|_{[0,\sigma)}$ is equivalent to $
   u$, one has  $\sigma=\tau$ a.s.. \\
           A local mild solution $(u(t))_{t\in[0,\tau)}$ is a global mild solution if $\tau=T$, $\mathbb{P}$-a.s. and $u\in M^p_{\mathbb{F}}(Y_T)$.
  \end{definition}

Now let us explore the stochastic Strichartz estimates. First we recall the definition of martingale type $2$ Banach space.
We say a real separable Banach space $(E,\|\cdot\|_E)$ is of martingale type
$2$ if there is a constant $K(E)>0$ such that
for all $E$-valued discrete martingales $\{M_n\}_{n=0}^N$ the
following inequality holds
\begin{align*}
    \sup_{n}\E\|M_n\|_{E}^2\leq K(E)\sum_{n=0}^N\E\|M_n-M_{n-1}\|_{E}^2,
\end{align*}
where we set $M_{-1}=0$ as usual. Note that all $L^p$ spaces, $p\geq
2$ are of martingale type $2$.

Let $E$ be a separable Banach space of martingale type $2$ and let $\xi:[0,T]\times\Omega \times Z\rightarrow E$ be an $E$-valued $\mathbb{F}$-predictable process in $L^{2}([0,T]\times \Omega\times Z)$.  For detailed discussion of stochastic integral with respect to Poisson random measure in martingale type $2$ Banach space,  we refer to \cite{Zhu+Brz+Hau2017,Br+Liu+Zhu2019}.  The Burkholder inequality holds in this framework, i.e.  there exists a generic constant $C_{q}$ depending only on $q$ and the constant $K(E)$ from the martingale type 2 condition such that (see e.g. \cite[Corollary 2.4]{Br+Liu+Zhu2019})
 \begin{align}
		    &\EE\sup_{t\in[0,T]}\Big\|\int_0^t\int_Z\xi(s,z)\tilde{N}(\d s,\d z)\Big\|_E^{q}\leq C_{q}\,\EE\Big(\int_0^T\int_Z \|\xi(s,z)\|^q_E\;\nu(\d z)\d s\Big)\nonumber\\
		 &\hspace{6cm}+C_{q}\,\EE \Big(\int_0^T\int_Z\|\xi(s,z)\|^{2}_E\;\nu(\d z)\d s\Big)^{\frac{q}{2}}\text{  for all }2\leq q<\infty.\label{intro-eq-main-1}
	\end{align}
	\begin{lemma}Suppose that $F$ is a martingale type $2$ Banach space. Let $p\in[2,\infty)$ and $\xi:[0,T]\times\Omega \times Z\rightarrow  L^p(0,T;F)$ be an $\mathbb{F}$-predictable process.  For all $q\geq 2$, we have
	 \begin{align}
		    \EE\Big\|\int_0^T\int_Z\xi(s,z)\tilde{N}(\d s,\d z)\Big\|_{L^p(0,T;F)}^{q}\leq& C_{q}\,\EE\Big(\int_0^T\int_Z \|\xi(s,z)\|^q_{L^p(0,T;F)}\;\nu(\d z)\d s\Big)\nonumber\\
		 &+C_{q}\,\EE \Big(\int_0^T\int_Z\|\xi(s,z)\|^{2}_{L^p(0,T;F)}\;\nu(\d z)\d s\Big)^{\frac{q}{2}}.\label{Burk-Lp}
	\end{align}
	\end{lemma}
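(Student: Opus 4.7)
The plan is to reduce the statement to a direct application of the Burkholder-type inequality \eqref{intro-eq-main-1} by taking the ambient martingale type 2 Banach space to be $E=L^p(0,T;F)$ itself, rather than $F$. The whole argument therefore hinges on one structural fact: if $F$ is of martingale type $2$ and $p\in[2,\infty)$, then the Bochner space $L^p(0,T;F)$ is again of martingale type $2$. Granted this, the inequality \eqref{Burk-Lp} is literally \eqref{intro-eq-main-1} applied to $\xi$ regarded as a predictable $E$-valued process, evaluated at the upper limit $t=T$ (and thus in fact weaker than the maximal-in-time version available from \eqref{intro-eq-main-1}).

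To establish the structural fact, I would take a discrete $L^p(0,T;F)$-valued martingale $(M_n)_{n=0}^N$ with $M_{-1}=0$ and, for each fixed $t\in[0,T]$, observe that $(M_n(t))_{n=0}^N$ is an $F$-valued discrete martingale. The martingale type $2$ property of $F$ gives
\begin{align*}
\sup_{n}\E\|M_n(t)\|_F^2 \leq K(F)\sum_{n=0}^N \E\|M_n(t)-M_{n-1}(t)\|_F^2.
\end{align*}
Raising to the power $p/2\geq 1$, integrating in $t$, and combining Minkowski's inequality in $L^{p/2}(0,T)$ with the tower of $L^p$--$L^2$ comparisons (noting that $\|M_n\|_{L^p(0,T;F)}^2 = \bigl\| \|M_n(\cdot)\|_F^2\bigr\|_{L^{p/2}(0,T)}$) yields the corresponding martingale type $2$ inequality for $L^p(0,T;F)$, with a constant $K(L^p(0,T;F))$ depending only on $p$ and $K(F)$. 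Once this is in hand, the Burkholder inequality \eqref{intro-eq-main-1} is applicable with $E=L^p(0,T;F)$, and writing it out for $\xi(s,z)\in L^p(0,T;F)$ produces exactly \eqref{Burk-Lp}.

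The only technical subtlety I expect is bookkeeping rather than a genuine obstacle: namely, checking that the stochastic integral $\int_0^T\int_Z \xi(s,z)\,\tilde N(\d s,\d z)$ constructed with values in $L^p(0,T;F)$ agrees (after the obvious pathwise evaluation in $t$) with what one would obtain by integrating first in $F$ and then taking the $L^p(0,T;\cdot)$-norm. This is a standard Fubini-type identification for Bochner-valued stochastic integrals against compensated Poisson random measures in martingale type $2$ spaces and can be quoted from \cite{Zhu+Brz+Hau2017} or \cite{Br+Liu+Zhu2019}. With that identification the proof is complete.
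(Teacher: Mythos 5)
Your overall strategy is exactly the paper's: the lemma is the Burkholder inequality \eqref{intro-eq-main-1} applied with $E=L^p(0,T;F)$, so everything reduces to the structural fact that $L^p(0,T;F)$ inherits martingale type $2$ from $F$ when $p\ge 2$. The paper simply quotes this fact (identifying $L^p(0,T;F)$ with a closed subspace of $L^p(\RR^+;F)$ and asserting that the latter is of martingale type $2$). You instead attempt to prove it, and this is where there is a genuine gap: the pointwise-in-$t$ argument does not close. From the fibrewise estimate $\E\|M_N(t)\|_F^2\le K(F)\sum_n\E\|d_n(t)\|_F^2$ (with $d_n=M_n-M_{n-1}$), raising to the power $p/2$ and integrating in $t$ controls the quantity $\big\|\,\E\|M_N(\cdot)\|_F^2\,\big\|_{L^{p/2}(0,T)}=\big\|\,\|M_N\|_F^2\,\big\|_{L^{p/2}_t(L^1_\omega)}$. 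What you need to bound, however, is $\E\|M_N\|_{L^p(0,T;F)}^2=\big\|\,\|M_N\|_F^2\,\big\|_{L^1_\omega(L^{p/2}_t)}$, and Minkowski's integral inequality gives $\big\|\,\|M_N\|_F^2\,\big\|_{L^{p/2}_t(L^1_\omega)}\le\big\|\,\|M_N\|_F^2\,\big\|_{L^1_\omega(L^{p/2}_t)}$, i.e.\ the comparison runs in the wrong direction on the left-hand side. (It is fine on the right-hand side, where the triangle inequality in $L^{p/2}(0,T)$ and Minkowski do yield $\big\|\sum_n\E\|d_n(\cdot)\|_F^2\big\|_{L^{p/2}_t}\le\sum_n\E\|d_n\|^2_{L^p(0,T;F)}$.) So the chain of inequalities you describe bounds a quantity that is in general smaller than the one appearing in the definition of martingale type $2$ for $L^p(0,T;F)$.

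The fact itself is true, but its proof needs more than the second-moment inequality for $F$ applied fibre by fibre. One either invokes Pisier's theory (martingale type $2$ is equivalent to $2$-uniform smoothability, and $L^p(\mu;F)$ is $2$-uniformly smooth for $2\le p<\infty$ whenever $F$ is), or one uses the extrapolated $p$-th moment form of the martingale type $2$ inequality, $\E\|M_N(t)\|_F^p\le C_p\,\E\big(\sum_n\|d_n(t)\|_F^2\big)^{p/2}$, which can be integrated in $t$, combined with the triangle inequality in $L^{p/2}(0,T)$ to obtain the $p$-th moment inequality for $L^p(0,T;F)$, and then converted back to second moments by the standard equivalence of the $q$-moment formulations. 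The simplest repair, and what the paper does, is to quote the known result for $L^p(\RR^+;F)$ and restrict to the closed subspace $L^p(0,T;F)$. Finally, your concluding remark about identifying the $L^p(0,T;F)$-valued stochastic integral with the fibrewise one is not actually needed for this lemma, since here $\xi$ is already $L^p(0,T;F)$-valued and the integral is the $E$-valued one; that identification is what gets used in the subsequent proposition on the stochastic convolution.
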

	\begin{proof}
	The proof of  this lemma is an immediate consequence of the facts that $L^p(\RR^+;F)$ is a martingale type 2 Banach space, and since $L^p(0,T;F)$ is isometrically identified with a closed subspace of $L^p(\RR^+;F)$, it is still a martingale type 2 Banach space.
	\end{proof}

\begin{proposition}\label{prop-str-ineq-2} Let $\xi:[0,T]\times\Omega \times Z\rightarrow L^2(\RR^n;\CC)$ be an $\mathbb{F}$-predictable process.  For all $q\geq 2$, we have
\begin{align}\label{str-L2-ineq}
\EE\sup_{t\in[0,T]}\Big\|\int_0^t\int_Z S_{t-s}\xi(s,z)\tilde{N}(\d s,\d z)\Big\|^q_{L^2(\RR^n)}\leq &C_{q}\,\EE\int_0^T\int_Z\|\xi(s,z)\|^q_{L^2(\mathbb{R}^n)}\nu(\d z)\d s\nonumber\\
&+C_{q}\,\EE\Big(\int_0^T\int_Z\|\xi(s,z)\|^2_{L^2(\mathbb{R}^n)}\nu(\d z)\d s\Big)^{\frac{q}{2}}.
\end{align}
\end{proposition}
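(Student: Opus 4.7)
The plan is to exploit the fact that by Assumption \ref{assu-1}(1), $(S_t)_{t\in\RR}$ is a group of isometries on $L^2(\RR^n)$, which allows us to reduce the stochastic convolution to a genuine $L^2$-valued martingale on which the Burkholder inequality \eqref{intro-eq-main-1} can be applied directly.

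First, I would introduce the auxiliary process
\begin{align*}
\tilde M(t) := \int_0^t\int_Z S_{-s}\xi(s,z)\,\tilde N(\d s,\d z), \qquad t\in[0,T].
\end{align*}
Since $S_{-s}$ does not depend on $t$ and $S_{-s}\xi(s,\cdot)$ takes values in the martingale type $2$ Banach space $L^2(\RR^n)$, $\tilde M$ is a well-defined $L^2(\RR^n)$-valued c\`adl\`ag martingale. Moreover, using the group property $S_{t-s}=S_tS_{-s}$, one immediately identifies
\begin{align*}
\int_0^t\int_Z S_{t-s}\xi(s,z)\,\tilde N(\d s,\d z) = S_t\tilde M(t), \qquad t\in[0,T],
\end{align*}
which is justified because $S_t\in\mathcal{L}(L^2(\RR^n))$ commutes with the stochastic integral in $s$.

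Second, I would use the fact that each $S_t$ is an isometry of $L^2(\RR^n)$ to get the pointwise identity $\|S_t\tilde M(t)\|_{L^2(\RR^n)}=\|\tilde M(t)\|_{L^2(\RR^n)}$, and therefore
\begin{align*}
\EE\sup_{t\in[0,T]}\Big\|\int_0^t\int_Z S_{t-s}\xi(s,z)\,\tilde N(\d s,\d z)\Big\|^q_{L^2(\RR^n)}
=\EE\sup_{t\in[0,T]}\|\tilde M(t)\|^q_{L^2(\RR^n)}.
\end{align*}
Now I would apply the Burkholder-type inequality \eqref{intro-eq-main-1} to $\tilde M$ with $E=L^2(\RR^n)$, obtaining an upper bound in terms of $\|S_{-s}\xi(s,z)\|_{L^2(\RR^n)}$. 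Using once more that $S_{-s}$ is an $L^2$-isometry, $\|S_{-s}\xi(s,z)\|_{L^2(\RR^n)}=\|\xi(s,z)\|_{L^2(\RR^n)}$, and \eqref{str-L2-ineq} follows.

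I do not expect any serious obstacle: the only delicate point is justifying the interchange $S_t\int_0^t\int_Z S_{-s}\xi\,\d\tilde N=\int_0^t\int_Z S_{t-s}\xi\,\d\tilde N$, which for a bounded operator acting on a martingale type $2$ stochastic integral against a compensated Poisson random measure is standard (it can be proved first for simple integrands and then passed to the limit using the Burkholder inequality together with the isometry of $S_{-s}$). The isometry property of $(S_t)$ is the entire content of the argument; without it, one would only get a bound involving $\|S_{t-s}\xi\|_{L^2}$ inside the integral, which is of course the same thing here but crucially \emph{not} the case for the $L^p(0,T;L^r(\RR^n))$-version established later in Proposition \ref{prop-str-ineq-11}.
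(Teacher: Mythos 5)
Your argument is correct, and it takes a genuinely different (more self-contained) route than the paper. The paper disposes of this proposition in one line: it observes that $(S_t)_{t\in\RR}$ is a unitary group, hence a $C_0$-group of contractions on $L^2(\RR^n)$, and invokes the general maximal inequality for stochastic convolutions of contraction semigroups driven by compensated Poisson random measures (Theorem 3.1 of the authors' earlier paper cited as \cite{Br+Liu+Zhu2019}). You instead give the direct proof that is available precisely because $(S_t)$ is a \emph{group of isometries}: factor $S_{t-s}=S_tS_{-s}$, pull $S_t$ out of the stochastic integral so that the convolution becomes $S_t\tilde M(t)$ for a genuine $L^2(\RR^n)$-valued martingale $\tilde M$, use the isometry to reduce $\sup_t$ of the convolution to $\sup_t\|\tilde M(t)\|_{L^2(\RR^n)}$, apply the Burkholder inequality \eqref{intro-eq-main-1}, and undo $S_{-s}$ by isometry. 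This is exactly the mechanism that underlies the cited theorem in the unitary-group case (the general contraction case needs a dilation to a group of isometries before the same trick applies), so your route buys self-containedness and transparency at the cost of not covering mere contraction semigroups, while the paper's route is shorter but outsources the work to an external result. The one point worth making fully precise is the interchange
\begin{align*}
S_t\int_0^t\int_Z S_{-s}\xi(s,z)\,\tilde{N}(\d s,\d z)=\int_0^t\int_Z S_{t-s}\xi(s,z)\,\tilde{N}(\d s,\d z)
\end{align*}
together with the choice of version: the identity holds a.s.\ for each fixed $t$, so to take the supremum over all $t\in[0,T]$ one should \emph{define} the convolution as $S_t\tilde M(t)$, which is c\`adl\`ag because $\tilde M$ is c\`adl\`ag and $t\mapsto S_t$ is strongly continuous; as you note, this is standard and does not affect the estimate.
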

\begin{proof}
Note that $(S_t)_{t\in\RR}$ is a unitary group in $L^2(\mathbb{R}^n)$, so $(S_t)_{t\in\RR}$ is a $C_0$-group of contractions. The required result \eqref{str-L2-ineq} follows from a straightforward application of the maximal inequality  in  \cite[Theorem 3.1]{Br+Liu+Zhu2019}.
\end{proof}

Now we shall establish a stochastic Strichartz inequality for
the stochastic convolutions driven by the compensated Poisson random measures which will play a major role in the study of stochastic NLS.
\begin{proposition}\label{prop-str-ineq-11}
Let $(p,r)$ be an admissible pair and  $p,r\in[2,\infty)$. Then for all $q\geq 2$ and all $\mathbb{F}$-predictable process $\xi:[0,T]\times\Omega \times Z\rightarrow L^{r}(\RR^{n};\mathbb{C})$ in $L^{q}\big(\Omega;L^{2}([0,T]\times Z;L^{2}(\mathbb{R}^{n}))\cap L^{q}([0,T]\times Z;L^{2}(\mathbb{R}^{n}))\big)$, we have
\begin{align}\label{str-ineq-stoch}
\EE\Big\|\int_0^\cdot\int_Z S_{\cdot-s}\xi(s,z)\tilde{N}(\d s,\d z)\Big\|^q_{L^p(0,T;L^r(\RR^n))}\leq &C_{q}\,\EE\Big(\int_0^T\int_{Z}\|\xi(s,z)\|^2_{L^2(\RR^n)}\nu(\d z)\d s\Big)^{\frac{q}{2}}\nonumber\\
&+C_{q}\,\EE\Big(\int_0^T\int_{Z}\|\xi(s,z)\|^q_{L^2(\RR^n)}\nu(\d z)\d s \Big).
\end{align}

\end{proposition}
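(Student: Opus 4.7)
The plan is to lift the time-parameterised stochastic convolution into a single $L^{p}(0,T;L^{r}(\RR^{n}))$-valued Poisson stochastic integral and then combine the Burkholder-type inequality \eqref{Burk-Lp} with the deterministic Strichartz inequality \eqref{eq-stri-1}. Since $r\in[2,\infty)$, the space $L^{r}(\RR^{n})$ is of martingale type $2$, and therefore so is $L^{p}(0,T;L^{r}(\RR^{n}))$; this is precisely the setting in which \eqref{Burk-Lp} applies.

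For each $\mathbb{F}$-predictable $\xi$ as in the statement I would introduce the $L^{p}(0,T;L^{r}(\RR^{n}))$-valued predictable process
$$\eta(s,z):=\bigl(t\mapsto 1_{[s,T]}(t)\,S_{t-s}\xi(s,z)\bigr).$$
Using that $(S_{t})_{t\in\RR}$ is a unitary group on $L^{2}(\RR^{n})$ together with \eqref{eq-stri-1} (exactly as observed in Remark \ref{remark-stri-eq-1}), one gets the pointwise bound
$$\|\eta(s,z)\|_{L^{p}(0,T;L^{r}(\RR^{n}))}\le C\,\|\xi(s,z)\|_{L^{2}(\RR^{n})},$$
so the moment assumptions on $\xi$ guarantee all the integrability needed to apply \eqref{Burk-Lp} to the integrand $\eta$.

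The principal technical point is a stochastic Fubini-type identification
$$\Bigl(t\mapsto \int_{0}^{t}\!\!\int_{Z}S_{t-s}\xi(s,z)\,\tilde{N}(\d s,\d z)\Bigr) \;=\; \int_{0}^{T}\!\!\int_{Z}\eta(s,z)\,\tilde{N}(\d s,\d z)\qquad \text{in }L^{p}(0,T;L^{r}(\RR^{n})),\;\mathbb{P}\text{-a.s.}$$
For simple predictable integrands $\xi$ this is immediate, since on each elementary block the integrand $1_{[s,T]}(t)S_{t-s}$ is deterministic and one can interchange the finite Poisson sum with the Bochner integral in $t$. The general case then follows by a density argument: approximate $\xi$ by simple predictable processes in the norm $\|\cdot\|_{L^{q}(\Omega;L^{2}\cap L^{q}([0,T]\times Z;L^{2}))}$ and pass to the limit using \eqref{Burk-Lp} itself together with the pointwise Strichartz bound above.

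Once this identification is in hand, applying \eqref{Burk-Lp} with $F=L^{r}(\RR^{n})$ to the integrand $\eta$ and substituting $\|\eta(s,z)\|_{L^{p}(0,T;L^{r}(\RR^{n}))}\le C\|\xi(s,z)\|_{L^{2}(\RR^{n})}$ yields \eqref{str-ineq-stoch} with a constant depending only on $q$, the Strichartz constant, and the martingale type $2$ constant of $L^{r}(\RR^{n})$. The main obstacle is really this Fubini-type identification, as neither side of \eqref{str-ineq-stoch} is defined pathwise without care; everything else amounts to a clean composition of the martingale type $2$ Burkholder inequality for $L^{p}(0,T;L^{r})$-valued Poisson integrals with the deterministic Strichartz estimate.
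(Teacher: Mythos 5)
Your proposal is correct and follows essentially the same route as the paper's proof: both lift the convolution to the single $L^p(0,T;L^r(\RR^n))$-valued Poisson integral of the predictable integrand $(s,z)\mapsto 1_{[s,T]}(\cdot)S_{\cdot-s}\xi(s,z)$, apply the martingale type $2$ Burkholder inequality \eqref{Burk-Lp}, and then invoke the deterministic Strichartz estimate \eqref{eq-stri-1} together with unitarity of $S_{-s}$ to reduce the $L^p(0,T;L^r)$-norm of the integrand to $\|\xi(s,z)\|_{L^2(\RR^n)}$. Your explicit treatment of the identification step via simple integrands and density is a point the paper passes over as a chain of equalities, but it is the same argument.
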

\begin{proof}
For an $L^{r}(\mathbb{R}^{n})$-valued $\mathbb{F}$-predictable process $\xi$, we define an $L^{p}(0,T;L^{r}(\RR^{n}))$-valued function $\Upsilon$ as follows
\begin{align}
     \Upsilon_{s,z}:=\big\{[0,T]\ni t\mapsto 1_{[s,T]}(t)S_{t-s}\xi(s,z)\big\},\quad s\in[0,T],\,z\in Z.
\end{align}
Note that the mapping $\Gamma_{s}:L^{2}(\RR^{n})\ni x\mapsto 1_{[s,T]}(\cdot)S_{t-s}x\in L^{p}(0,T;L^{r}(\RR^{n}))$ is linear and continuous and $\Upsilon_{s,z}(\omega)=\Gamma_{s}\circ \xi(s,z,\omega)$, $(s,z,\omega)\in [0,T]\times Z\times \Omega$. So the process $\Upsilon_{s,z}$ is $\mathbb{F}$-predictable.
 Applying inequality \eqref{Burk-Lp} gives
\begin{align*}
&\EE\left\|\int_0^\cdot\int_Z S_{\cdot-s}\xi(s,z)\tilde{N}(\d s,\d z)\right\|^q_{L^p(0,T;L^r(\RR^n))}\\
&=\EE\Big(\int_0^T\Big\|\int_0^T\int_Z1_{[s,T]}(t) S_{t-s}\xi(s,z)\tilde{N}(\d s,\d z)\Big\|^p_{L^r(\RR^n)}\d t\Big)^{\frac{q}{p}}\\
&=\EE\Big(\int_0^T\Big\|\int_0^T\int_Z
\Upsilon_{s,z}(t)\tilde{N}(\d s,\d z)\Big\|^p_{L^r(\RR^n)}\d t\Big)^{\frac{q}{p}}\\
&= \EE\Big\|\int_0^T\int_Z
\Upsilon_{s,z}(\cdot)\tilde{N}(\d s,\d z)\Big\|^q_{L^p(0,T;L^r(\RR^n))}\\
&\leq  C_q  \EE\Big(\int_0^T\int_Z
\|\Upsilon_{s,z}(\cdot)\|^2_{L^p(0,T;L^r(\RR^n))}\nu(\d z)\d s\Big)^{\frac{q}{2}}\\
&\hspace{1cm}+C_q \EE\Big(\int_0^T\int_Z
\|\Upsilon_{s,z}(\cdot)\|^q_{L^p(0,T;L^r(\RR^n))}\nu(\d z)\d s\Big).
\end{align*}
By using the Strichartz inequality \eqref{eq-stri-1},
we have
\begin{align*}
\|\Upsilon_{s,z}(\cdot)\|_{L^p(0,T;L^r(\RR^n))}&=\| 1_{[s,T]}(\cdot) S_{\cdot-s}\xi(s,z)\|_{L^p(0,T;L^r(\RR^n))}\\
&\leq \|  S_{\cdot-s}\xi(s,z)\|_{L^p(0,T;L^r(\RR^n))}\\
&\leq C\|S_{-s}\xi(s,z)\|_{L^2(\RR^n)}\\
&\leq C\|\xi(s,z)\|_{L^2(\RR^n)}.
\end{align*}
Inserting back gives
\begin{align*}
   \EE\Big\|\int_0^\cdot\int_Z S_{\cdot-s}\xi(s,z)\tilde{N}(\d s,\d z)\Big\|^q_{L^p(0,T;L^r(\RR^n))}\leq  &C_{q}\EE\Big(\int_0^T\int_Z
\|\xi(s,z))\|^2_{L^2(\RR^n)}\nu(\d z)\d s\Big)^{\frac{q}{2}}\\
&+C_q \EE\Big(\int_0^T\int_Z
\|\xi(s,z)\|^q_{L^2(\RR^n)}\nu(\d z)\d s\Big).
\end{align*}

\end{proof}

\begin{remark} Although we only consider the stochastic NLS equations in this paper, similar estimates could be achieved for other stochastic PDE models. For example, the authors  in \cite{Br+Liu+Zhu2019,Br+Liu+Zhu-SPL-2019}  studied the existence and uniqueness of mild solutions for the 2D stochastic Navier-Stokes equation and the stochastic 2D quasi-geostrophic equation by separating the equations into a deterministic nonlinear PDE and a linear stochastic PDE respectively. With some technical modifications, the above arguments can be carried over to show the $L^{4}(0, T;L^{4}(\mathbb{R}^{2}))$-integrability of the stochastic convolutions of the linear stochastic PDEs of these two equations, which would guarantee the existence of solutions in $L^{4}(0, T;L^{4}(\mathbb{R}^{2}))$ to the stochastic Navier-Stokes equation and the stochastic 2D quasi-geostrophic equation.
\end{remark}
Put $T_{0}\geq 0$. Denote by $\mathbb{F}^{T_{0}}:=(\mathcal{F}_{t+T_{0}})_{t\geq0}$ the shifted filtration.  Define a new process by
\begin{align*}
N_{T_{0}}(t,A):=N(t+T_{0},A)-N(T_{0},A)
\end{align*}
for each $t\geq0$ and $A\in \mathcal{Z}$. It is easy to verify that $N_{T_{0}}(t,A)$ is a Poisson random measure with respect to $\mathbb{F}^{T_{0}}$ with the same intensity measure $\nu$ and
\begin{align}\label{changle-variable-SC}
\int_{0}^{t}\int_{Z}S_{t-s}g(T_{0}+s,z)\tilde{N}_{T_{0}}(\d s,\d z)=\int_{T_{0}}^{T_{0}+t}\int_{Z}S_{T_{0}+t-s}g(s,z)\tilde{N}(\d s,\d z).
\end{align}

\begin{corollary}\label{prop-str-ineq-1} Let $T_0\geq0$ and $T_{1}>0$. Assume that $(p,r)$ is an admissible pair and $p,r\in[2,\infty)$. Then for all $q\geq 2$ and all $\mathbb{F}^{T_{0}}$-predictable process $\xi:[0,T_{1}]\times\Omega \times Z\rightarrow L^2(\RR^n;\CC)$ in $L^{q}(\Omega;L^{2}([0,T_{1}]\times Z);L^{r}(\mathbb{R}^{n}) \cap L^{q}([0,T_{1}]\times Z);L^{2}(\mathbb{R}^{n}))$,
\begin{align}\label{str-ineq-stoch-2}
\EE\Big\|\int_0^\cdot\int_Z S_{\cdot-s}\xi(s,z)\tilde{N}_{T_{0}}(\d s,\d z)\Big\|^q_{L^p(0,T_{1};L^r(\RR^n))}\leq &C_{q}\Big[\EE\Big(\int_0^{T_1}\int_{Z}\|\xi(s,z)\|^2_{L^2(\RR^n)}\nu(\d z)\d s\Big)^{\frac{q}{2}}\nonumber\\
&+\EE\Big(\int_0^{T_1}\int_{Z}\|\xi(s,z)\|^q_{L^2(\RR^n)}\nu(\d z)\d s\Big)\Big].
\end{align}

\end{corollary}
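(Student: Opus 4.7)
The plan is to deduce Corollary \ref{prop-str-ineq-1} as a direct consequence of Proposition \ref{prop-str-ineq-11}, simply by re-running its proof with the original filtration $\mathbb{F}$ and compensated Poisson random measure $\tilde{N}$ replaced by their shifted counterparts $\mathbb{F}^{T_0}$ and $\tilde{N}_{T_0}$. The first step is to record the fact, already noted just before the statement, that $N_{T_0}(t,A)=N(t+T_0,A)-N(T_0,A)$ defines a time homogeneous Poisson random measure relative to $\mathbb{F}^{T_0}$ with the same intensity measure $\nu$. Hence $\tilde{N}_{T_0}$ is its compensated version with respect to $\mathbb{F}^{T_0}$.

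The second step is to check that all three ingredients used in the proof of Proposition \ref{prop-str-ineq-11} are insensitive to this change of filtration. Namely: (i) the Burkholder-type inequality \eqref{Burk-Lp} in the martingale type 2 Banach space $L^{p}(0,T_{1};L^{r}(\mathbb{R}^{n}))$ is an abstract statement that holds for the compensated integral of any predictable process against any compensated Poisson random measure (in particular $\tilde{N}_{T_{0}}$) in a martingale type 2 Banach space; (ii) the factorization $\Upsilon_{s,z}=\Gamma_{s}\circ\xi(s,z,\cdot)$ through the deterministic linear continuous map $\Gamma_{s}: L^{2}(\mathbb{R}^{n})\ni x\mapsto 1_{[s,T_{1}]}(\cdot)S_{\cdot-s}x\in L^{p}(0,T_{1};L^{r}(\mathbb{R}^{n}))$ still produces an $\mathbb{F}^{T_{0}}$-predictable $L^{p}(0,T_{1};L^{r}(\mathbb{R}^{n}))$-valued process because $\xi$ is $\mathbb{F}^{T_{0}}$-predictable by assumption; (iii) the deterministic Strichartz inequality \eqref{eq-stri-1}, applied to the function $\Upsilon_{s,z}$ exactly as in the proof of Proposition \ref{prop-str-ineq-11}, still yields $\|\Upsilon_{s,z}\|_{L^{p}(0,T_{1};L^{r}(\mathbb{R}^{n}))}\le C\|\xi(s,z)\|_{L^{2}(\mathbb{R}^{n})}$ since it is a purely deterministic estimate for the unitary group $(S_{t})_{t\in\mathbb{R}}$.

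Combining (i)--(iii) exactly as in the proof of Proposition \ref{prop-str-ineq-11} yields the desired bound \eqref{str-ineq-stoch-2}. The integrability hypothesis imposed on $\xi$ in the statement is precisely what is needed to ensure that both terms on the right-hand side of \eqref{Burk-Lp} (applied to $\Upsilon_{s,z}$) are finite, so no additional truncation or localization argument is required. There is no genuine obstacle here; the only point that requires mild care is bookkeeping the predictability with respect to the shifted filtration $\mathbb{F}^{T_0}$ and the correct interpretation of $\tilde{N}_{T_0}$, together with the identity \eqref{changle-variable-SC} if one wishes to translate the statement back to an integral over $[T_{0},T_{0}+T_{1}]$ against the original $\tilde{N}$.
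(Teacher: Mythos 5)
Your proposal is correct and matches the paper's (implicit) reasoning: the corollary is stated without proof precisely because, as you note, $\tilde{N}_{T_0}$ is a compensated Poisson random measure with respect to $\mathbb{F}^{T_0}$ with the same intensity $\nu$, so the proof of Proposition \ref{prop-str-ineq-11} carries over verbatim. Your checklist (i)--(iii) of what needs to be invariant under the shift is exactly the right bookkeeping.
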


\section{A truncated equation}

  In this section, we will construct a local solution of equation \eqref{NSS-integral-form}. Since the nonlinear term is not Lipschitz, we will use a similar truncation argument as in \cite{Brz+Mil2014,Bou+Deb1999,Hornung2018} and approximate the original equation by truncating the nonlinear term as follows.
  First we define a truncation function $\theta$. Let $\theta:\mathbb{R}_+\rightarrow[0,1]$ be a non-increasing $C_0^{\infty}$ function such that
  $1_{[0,1]}\leq \theta \leq 1_{[0,2]}$ and $\inf_{x\in\RR_+}\theta'(x)\geq-1$. For $R\geq 1$, set $\theta_R(\cdot)=\theta(\frac{\cdot}{R})$.
\begin{remark}\label{rem-theta}If $h:\RR_+\rightarrow\RR_+$ is a non-decreasing function, then for every $x,y\in\RR$,
\begin{align*}
\theta_R(x)h(x)\leq h(2R),\;\; |\theta_R(x)-\theta_R(y)|\leq \frac1R|x-y|.
\end{align*}
\end{remark}
Let us fix $R\geq 1$.
Now we will prove the existence and uniqueness of the global solution $u^R$ to the following truncated equation
  \begin{align}
  \begin{split}\label{SEE-trucated-1}
     u(t)=&S_tu_0-\i\int_0^tS_{t-s}(\theta_R(\|u\|_{Y_s})\big(\lambda |u(s)|^{2\sigma}u(s) \big)\d s\\
     &+\int_0^t\int_BS_{t-s}\Big[\Phi(z,u(s-))-u(s-)\Big]\tilde{N}(\d s,\d z)\\
     &+\int_0^t\int_BS_{t-s}\Big[\Phi(z,u(s))-u(s)+\i\sum_{j=1}^mz_jg_j(u(s))\Big]\nu(\d z)\d s,\quad 0\leq t\leq T.
     \end{split}
\end{align}
 For the simplicity of presentation, we shall adopt the following notations for $t\in[0,T]$,
   \begin{align}
     \begin{split}\label{eqn-Psi}
  [\Psi_1^{R}(u)](t)=&-\i\int_0^tS_{t-s}(\theta_R(\|u\|_{Y_s})\big(\lambda |u(s)|^{2\sigma}u(s) \big) \d s,\\
    [\Psi_2^{R}(u)](t) = & \int_0^t\int_BS_{t-s}\Big[\Phi(z,u(s-))-u(s-)\Big]\tilde{N}(\d s,\d z),\\
     [\Psi_3^{R}(u)](t) =&\int_0^t\int_BS_{t-s}\Big[\Phi(z,u(s))-u(s)+\i\sum_{j=1}^mz_jg_j(u(s))\Big]\nu(\d z)\d s.
 \end{split} \end{align}

Now let us estimate the deterministic term $\Psi_1^{R}(u)$.

  \begin{proposition}\label{prop_sto_stri_1}
Assume that $0<\sigma<\frac{2}{n}$ and $r=2\sigma+2$.  Then for every $T>0$ the function $\Psi_1^{R}$ maps   $Y_T$ into itself and,
   for all $u_1,u_2\in Y_T$, we have
  \begin{align}\label{est-phi-1}
     \|\Psi_1^{R}(u_1)-\Psi_1^{R}(u_2)\|_{Y_T}\leq C_{\sigma}|\lambda|R^{2\sigma}T^{1-\frac{n\sigma}{2}}\|u_1-u_2\|_{Y_T}.
  \end{align}
  \end{proposition}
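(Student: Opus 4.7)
The plan is to use the deterministic Strichartz estimate to reduce the problem to an $L^{p'}(0,T;L^{r'})$-bound on the difference of the truncated nonlinearities, and then to exploit the cutoff $\theta_R$ via a fundamental-theorem-of-calculus argument along the line segment joining $u_2$ to $u_1$.

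First I would apply the inhomogeneous Strichartz estimates \eqref{det-stri-ine-L2}--\eqref{det-stri-ine} of Proposition~\ref{prop-strichartz-1} with admissible pair $(\gamma,\rho)=(p,r)$, where $r=2\sigma+2$ so that $r'=\frac{2\sigma+2}{2\sigma+1}$. The admissibility condition $\frac{2}{p}=n\bigl(\frac{1}{2}-\frac{1}{r}\bigr)$ yields $p=\frac{4\sigma+4}{n\sigma}$, and the hypothesis $\sigma<2/n$ is equivalent to $p>r$ (equivalently $(2\sigma+1)p'<p$), which will be essential for the temporal H\"older step. The task then becomes to show
\[
\bigl\|\theta_R(\|u_1\|_{Y_\cdot})|u_1|^{2\sigma}u_1-\theta_R(\|u_2\|_{Y_\cdot})|u_2|^{2\sigma}u_2\bigr\|_{L^{p'}(0,T;L^{r'}(\RR^n))}\leq C_\sigma R^{2\sigma}T^{1-\frac{n\sigma}{2}}\|u_1-u_2\|_{Y_T}.
\]

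The central obstacle is that the naive additive decomposition $\theta_R(\|u_1\|_{Y_s})[|u_1|^{2\sigma}u_1-|u_2|^{2\sigma}u_2]+[\theta_R(\|u_1\|_{Y_s})-\theta_R(\|u_2\|_{Y_s})]|u_2|^{2\sigma}u_2$ leaves ``cross'' contributions such as $\theta_R(\|u_1\|_{Y_s})\|u_2(s)\|_{L^r}^{2\sigma}$, for which the truncation on $u_1$ gives no $R$-control of the other argument $u_2$. To circumvent this I would instead write
\[
\theta_R(\|u_1\|_{Y_s})|u_1|^{2\sigma}u_1-\theta_R(\|u_2\|_{Y_s})|u_2|^{2\sigma}u_2=\int_0^1\frac{d}{dt}\Bigl[\theta_R(\|u_t\|_{Y_s})|u_t|^{2\sigma}u_t\Bigr]\,dt
\]
with $u_t:=u_2+t(u_1-u_2)$. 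Differentiating yields one contribution carrying $\theta_R'(\|u_t\|_{Y_s})\,\partial_t\|u_t\|_{Y_s}\,|u_t|^{2\sigma}u_t$ and one carrying $\theta_R(\|u_t\|_{Y_s})\,\partial_t(|u_t|^{2\sigma}u_t)$. Both vanish outside $\{\|u_t\|_{Y_s}\leq 2R\}$, and on this set $\|u_t\|_{L^p(0,s;L^r)}\leq 2R$. Moreover $|\theta_R'|\leq 1/R$, $|\partial_t\|u_t\|_{Y_s}|\leq\|u_1-u_2\|_{Y_T}$ by convexity of the norm, and $|\partial_t(|u_t|^{2\sigma}u_t)|\leq (2\sigma+1)|u_t|^{2\sigma}|u_1-u_2|$.

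The estimate is then closed by two H\"older steps. Spatial H\"older (using $\frac{1}{r'}=\frac{2\sigma+1}{r}$) converts the pointwise bounds into $\||u_t|^{2\sigma+1}\|_{L^{r'}}=\|u_t\|_{L^r}^{2\sigma+1}$ and $\||u_t|^{2\sigma}|u_1-u_2|\|_{L^{r'}}\leq\|u_t\|_{L^r}^{2\sigma}\|u_1-u_2\|_{L^r}$. A three-exponent temporal H\"older in $L^{p'}(0,T)$---chosen so that the $\|u_t\|_{L^r}$-factors are raised to power $p$ and the $\|u_1-u_2\|_{L^r}$-factor to power $p$, which is licit precisely because $\sigma<2/n$---produces the residual time exponent $1-\frac{n\sigma}{2}$; combined with the bound $\int_0^T\mathbf{1}_{\|u_t\|_{Y_s}\leq 2R}\|u_t\|_{L^r}^p\,ds\leq(2R)^p$, this yields at most $R^{2\sigma+1}T^{1-\frac{n\sigma}{2}}\|u_1-u_2\|_{Y_T}$ for the $\theta_R'$-piece and $R^{2\sigma}T^{1-\frac{n\sigma}{2}}\|u_1-u_2\|_{Y_T}$ for the $\theta_R$-piece. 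The extra factor $1/R$ from $|\theta_R'|\leq 1/R$ in the former compensates exactly one power of $R$, so both contributions are bounded by $C_\sigma R^{2\sigma}T^{1-\frac{n\sigma}{2}}\|u_1-u_2\|_{Y_T}$. Minkowski's integral inequality in $t\in[0,1]$ and the reinsertion of $|\lambda|$ and the Strichartz constant complete the proof of \eqref{est-phi-1}; the mapping property $\Psi_1^R(Y_T)\subset Y_T$ follows analogously by taking $u_2\equiv 0$.
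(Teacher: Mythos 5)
Your argument is correct, and the reduction via the inhomogeneous Strichartz estimates and the two H\"older steps (spatial, using $\tfrac{1}{r'}=\tfrac{2\sigma+1}{r}$, and temporal, producing $T^{1-\frac{n\sigma}{2}}$ from $1-\tfrac{r}{p}=1-\tfrac{n\sigma}{2}>0$) is exactly what the paper does. Where you genuinely diverge is in handling the cutoff. The paper does use the ``naive'' additive decomposition
$\big(\theta_R(\|u_1\|_{Y_\cdot})-\theta_R(\|u_2\|_{Y_\cdot})\big)|u_2|^{2\sigma}u_2+\theta_R(\|u_1\|_{Y_\cdot})\big(|u_1|^{2\sigma}u_1-|u_2|^{2\sigma}u_2\big)$,
and the cross term you identify as the central obstacle is neutralized there by a simple device: introducing the exit times $\tau_i=\inf\{t:\|u_i\|_{Y_t}>2R\}\wedge T$ and assuming WLOG $\tau_1\leq\tau_2$ (by symmetry of the left-hand side one may factor out whichever $\theta_R$ has the smaller exit time). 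Then the second term is supported on $[0,\tau_1]\subset[0,\tau_2]$, where \emph{both} $\|u_1\|_{L^p(0,\tau_1;L^r)}$ and $\|u_2\|_{L^p(0,\tau_1;L^r)}$ are $\leq 2R$, and the first term is supported on $[0,\tau_2]$ where $\|u_2\|_{L^p(0,\tau_2;L^r)}\leq 2R$; this yields the same $(2R)^{2\sigma}$ and $\tfrac{1}{R}(2R)^{2\sigma+1}$ factors you obtain. Your fundamental-theorem-of-calculus argument along $u_t=u_2+t(u_1-u_2)$ avoids this case distinction entirely, since each piece of $\tfrac{d}{dt}[\theta_R(\|u_t\|_{Y_s})|u_t|^{2\sigma}u_t]$ is automatically supported on $\{\|u_t\|_{Y_s}\leq 2R\}$; the price is a mild regularity point you should flag explicitly, namely that $t\mapsto\|u_t\|_{Y_s}$ is only Lipschitz (with constant $\|u_1-u_2\|_{Y_s}$, by convexity), so the derivative exists merely a.e.\ in $t$ and the identity must be read in the absolutely continuous sense before applying Minkowski's inequality in $t$. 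With that caveat your estimate closes with the same constant structure $C_\sigma|\lambda|R^{2\sigma}T^{1-\frac{n\sigma}{2}}$, and taking $u_2\equiv 0$ (so $\Psi_1^R(0)=0$) indeed gives the mapping property, just as in the paper.
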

  \begin{proof} Let us choose and fix $T>0$.   Take $u\in Y_{T}$. Let us define $\tau=\inf\{t\geq 0:\|u\|_{Y_t}> 2R\}\wedge T$. Then $\tau$ is a stopping time, c.f. \cite{Bass}. Observe that $\theta_R(\|u\|_{Y_t})=0$ for $\|u\|_{Y_t}\geq 2R$. Since $t\rightarrow \|u\|_{Y_t}$ is non-decreasing on $[0,T]$, we have $\theta_R(|u|_{Y_t})=0$ for $t\geq \tau$.
  By applying the Strichartz inequality \eqref{det-stri-ine-L2} and \eqref{det-stri-ine}, we get
    \begin{align*}
          \sup_{0\leq t\leq T} \|\Psi_1^R(u)(t)\|_{L^2(\RR^n)}
           &\leq C \big\|\theta_R(\|u\|_{Y_\cdot})) \big(\lambda |u|^{2\sigma}u \big)\big\|_{{L^{p'}([0,T];L^{r'}(\RR^n))}};\\
           \|\Psi_1^{R}(u)\|_{L^p(0,T;L^r(\mathbb{R}^n))}&\leq C\|  \theta_R(\|u\|_{Y_\cdot})\big(\lambda |u|^{2\sigma}u \big)\|_{L^{p'}([0,T];L^{r'}(\RR^n))}.
  \end{align*}
  Collecting the above two estimates and then applying H\"older's inequality, we obtain
    \begin{align*}
           \|\Psi_1^R(u)\|_{Y_t}&\leq C\big\|\theta_R(\|u\|_{Y_\cdot})) \big(\lambda |u|^{2\sigma}u \big)\big\|_{{L^{p'}([0,T];L^{r'}(\RR^n))}}\\
  & \leq C|\lambda|\cdot \| |u|^{2\sigma+1}\|_{L^{p'}([0,\tau];L^{r'}(\RR^n))}\\
  &=C|\lambda|\Big(\int_0^{\tau}\Big(\int_{\RR^n}|u(t,x)|^{r}\d x\Big)^{\frac{p'}{r'}}\d t\Big)^{\frac{1}{p'}}\\
  &\leq C|\lambda|\tau^{\frac{p-r}{p-1}\frac{1}{p'}}\Big(\int_0^{\tau}\Big(\int_{\RR^n}|u(t,x)|^{r}\d x\Big)^{\frac{p}{r}}\d t\Big)^{\frac{1}{p}\cdot\frac{r}{r'}}\\
  &\leq C|\lambda|T^{1-\frac{n\sigma}{2}}\Big(\int_0^{\tau}\Big(\int_{\RR^n}|u(t,x)|^{r}\d x\Big)^{\frac{p}{r}}\d t\Big)^{\frac{1}{p}(2\sigma+1)}\\
 &\leq C|\lambda|T^{1-\frac{n\sigma}{2}}\|u\|^{2\sigma+1}_{Y_{\tau}},
  \end{align*}
  where $1-\frac{n\sigma}{2}>0$. Now take $u_1,u_2\in Y_T$. Let us define $\tau_i=\inf\{t\geq 0:|u_i|_{Y_t}> 2R\}\wedge T$, $i=1,2$. Without loss of generality we may assume that $\tau_1\leq \tau_2$.
Similarly, by the Strichartz inequality \eqref{det-stri-ine-L2} and \eqref{det-stri-ine}, we obtain
  \begin{align*}
   &  \sup_{0\leq t\leq T}  \|\Psi_1^{R}(u_1)-\Psi_1^{R}(u_2)\|_{L^2(\mathbb{R}^n)}\leq C\big\|\theta_R(\|u_1\|_{Y_\cdot})\lambda |u_1|^{2\sigma}u_1-\theta_R(\|u_2\|_{Y_\cdot})\lambda |u_2|^{2\sigma}u_2\big\|_{L^{p'}(0,T;L^{r'}(\RR^n))},\\
  & \|\Psi_1^{R}(u_1)-\Psi_1^{R}(u_2)\|_{L^p(0,T;L^r(\mathbb{R}^n))}\leq C\big\|\theta_R(\|u_1\|_{Y_\cdot})\lambda |u_1|^{2\sigma}u_1-\theta_R(\|u_2\|_{Y_\cdot})\lambda |u_2|^{2\sigma}u_2\big\|_{L^{p'}(0,T;L^{r'}(\RR^n))}.
     \end{align*}
   Combing the above two estimates and then applying  Remark \ref{rem-theta},  Taylor's formula and H\"older's inequality, we get
   \begin{align*}
          \|\Psi_1^{R}(u_1)-\Psi_1^{R}(u_2)\|_{{Y_T}}&\leq 2C\big\|\theta_R(\|u_1\|_{Y_\cdot})\lambda |u_1|^{2\sigma}u_1-\theta_R(\|u_2\|_{Y_\cdot})\lambda |u_2|^{2\sigma}u_2\big\|_{L^{p'}(0,T;L^{r'}(\RR^n))}\\
          &\leq 2C\big\Vert \big(\theta_R(\|u_1\|_{Y_\cdot})-\theta_R(\|u_2\|_{Y_\cdot})\big)\lambda |u_2|^{2\sigma}u_2\big\Vert_{L^{p'}(0,T;L^{r'}(\RR^n))}\\
           &\hspace{0.5cm}+2C\big\Vert\theta_R(\|u_1\|_{Y_\cdot})\big(\lambda |u_1|^{2\sigma}u_1-\lambda |u_2|^{2\sigma}u_2\big)\big\|_{L^{p'}(0,T;L^{r'}(\RR^n))}\\
            &\leq 2|\lambda|\frac{C}{R}\|u_1-u_2\|_{Y_T}\left\|u_2\right\|^{2\sigma+1}_{L^{p'}(0,\tau_2;L^{r'}(\RR^n))}\\
           &\hspace{0.5cm}+2|\lambda|C\big\Vert  |u_1|^{2\sigma}u_1- |u_2|^{2\sigma}u_2\big\|_{L^{p'}(0,\tau_1;L^{r'}(\RR^n))}\\
                      &\leq 2|\lambda|\frac{C}{R} T^{1-\frac{n\sigma}{2}}\|u_2\|^{2\sigma+1}_{L^{p}(0,\tau_2;L^{r}(\RR^n))}\|u_1-u_2\|_{Y_T}
          \\&\hspace{0.5cm}+2|\lambda|C_{\sigma}T^{1-\frac{n\sigma}{2}} (\|u_{1}\|_{L^{p}(0,\tau_{1;}L^{r}(\RR^{n}))}+\|u_{2}\|_{L^{p}(0,\tau_{1;}L^{r}(\RR^{n}))})^{2\sigma}\|u_{1}-u_{2}\|_{L^{p}(0,\tau_{1;}L^{r}(\RR^{n}))},
                       \end{align*}
  where  $1-\frac{n\sigma}{2}>0$.
 It follows that
    \begin{align*}
          \|\Psi_1^{R}(u_1)-\Psi_1^{R}(u_2)\|_{Y_{T}}
          \leq 4|\lambda|C T^{1-\frac{n\sigma}{2}}(2R)^{2\sigma}\|u_1-u_2\|_{Y_T}+2|\lambda|C_{\sigma}T^{1-\frac{n\sigma}{2}} (4R)^{2\sigma}\|u_1-u_2\|_{Y_T}
  \end{align*}
  which proves \eqref{est-phi-1}.

  \end{proof}


    To establish the stochastic Strichartz estimates for the stochastic term $\Psi_2^{R}$, we  need  the following two technical lemmas.

    \begin{lemma}\label{lem-norm conservation}
Under Assumption \ref{assu-1}, the Marcus map given in \eqref{phi-equation} satisfies
  \begin{equation}
 \label{eqn-norm conservation}
  |\Phi(\theta,z,y)|=|\Phi(0,z,y)|=|y|, \;\;\;\mbox{ for
all }\theta\in \mathbb{R},\;\; z\in \mathbb{R}^m, \;\;y\in \CC.
\end{equation}
  \end{lemma}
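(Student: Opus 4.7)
The approach is to show that the modulus $|\Phi(\cdot,z,y)|$ is conserved by the flow of the ODE \eqref{phi-equation}, from which \eqref{eqn-norm conservation} follows immediately by the uniqueness of solutions to that ODE and the initial condition $\Phi(0,z,y)=y$. The key point is that the right-hand side of \eqref{phi-equation} is, at each instant, tangent to the circle of radius $|\Phi|$ centered at the origin, because multiplication by $-\i$ is a rotation by $\pi/2$.

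Concretely, I would differentiate $t\mapsto |\Phi(t,z,y)|^2$ along a solution and use the explicit structure $g_j(y)=\tilde{g}_j(|y|^2)y$ with $\tilde{g}_j$ real-valued. Writing $\Phi=\Phi(t,z,y)$ and using the identification $\CC\cong\RR^2$ from the remark following Assumption \ref{assu-1}, the ODE \eqref{phi-equation} becomes
\begin{align*}
\dot{\Phi}=\sum_{j=1}^m z_j\phi_j(\Phi)=\sum_{j=1}^m z_j\tilde{g}_j(|\Phi|^2)\,\mathcal{I}\Phi,
\end{align*}
so that
\begin{align*}
\frac{d}{dt}|\Phi|^2=2\langle\Phi,\dot{\Phi}\rangle=2\Bigl(\sum_{j=1}^m z_j\tilde{g}_j(|\Phi|^2)\Bigr)\langle\Phi,\mathcal{I}\Phi\rangle=0,
\end{align*}
since the operator $\mathcal{I}$ is a rotation by $\pi/2$ and hence $\langle y,\mathcal{I}y\rangle=0$ for all $y\in\RR^2$. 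Consequently $|\Phi(t,z,y)|$ is constant in $t$, and equality with $|\Phi(0,z,y)|=|y|$ follows from the initial condition.

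There is essentially no obstacle here: everything reduces to the algebraic identity $\langle y,\mathcal{I}y\rangle=0$, combined with the fact that the scalar $z_j\tilde{g}_j(|\Phi|^2)$ is real. The only point requiring a brief justification is that the solution $t\mapsto\Phi(t,z,y)$ exists globally in $t$, which is already built into the setup of \eqref{phi-equation} under Assumption \ref{assu-1}(2); indeed since $|\Phi|$ stays bounded by $|y|$ on any interval where the solution exists, a standard continuation argument extends the solution to all of $\RR$, so the identity \eqref{eqn-norm conservation} holds for every $\theta\in\RR$.
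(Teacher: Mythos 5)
Your proof is correct and is essentially the same as the paper's: the paper likewise differentiates $|\Phi(\theta,z,y)|^2$ in $\theta$, uses the structure $g_j(y)=\tilde{g}_j(|y|^2)y$ with $\tilde{g}_j$ real, and concludes via the identity $\Re\langle \i u,u\rangle=0$, which is exactly your $\langle y,\mathcal{I}y\rangle=0$ under the identification $\CC\cong\RR^2$. (Your omission of the minus sign from $\dot\Phi=-\sum_j z_j\phi_j(\Phi)$ is immaterial, since the inner product vanishes either way.)
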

\begin{proof}
 Let us fix $ z\in \mathbb{R}^m$ and $y\in \CC$. Then we have
 \begin{align}
\frac12  \frac{\partial |\Phi(\theta,z,y)|^2}{\partial \theta}&=\Re\big{\langle} \frac{\partial
\Phi(\theta,z,y)}{\partial\theta},\Phi(\theta,z,y) \big{\rangle}\\
&=-\Re\big{\langle} \i\sum_{j=1}^nz_jg_j(\Phi(\theta,z,y)),\Phi(\theta,z,y)\big{\rangle}\\
&=-\sum_{j=1}^n z_j\Re\big{\langle} \i\;\tilde{g}_j(|\Phi(\theta,z,y)|^2)\Phi(\theta,z,y),
\Phi(\theta,z,y)\big{\rangle}\\
&=- \sum_{j=1}^n  \tilde{g}_j(|\Phi(\theta,z,y)|^2)\Re\big[\Phi(\theta,z,y)\overline{\i
\Phi(\theta,z,y)}\big]=0,
  \end{align}
where we used the following fundamental identity:
          \begin{equation}\label{eqn-=0}
                     \Re \langle \i u,u\rangle= \Re \bigl[ \i\langle  u,u\rangle \bigr]=\Re\bigl[ \i \vert u \vert ^2
\bigr]=0,\;\;\ u \in \mathbb{C}.
          \end{equation}
\end{proof}

For the simplicity of notation, for each $s\in[0,T]$, $z\in\mathbb{R}^{m}$, $y\in\mathbb{C}$, we denote
  \begin{align*}
  &\mathcal{G}(s,z,y):=\Phi(s,z,y)-y\\
   &\mathcal{H}(s,z,y):=\Phi(s,z,y)-y+\i\sum_{j=1}^mz_jg_j(y).
  \end{align*}
  For abbreviation, we  let $G(z,y)=\mathcal{G}(1,z,y)$ and $H(z,y)=\mathcal{H}(1,z,y)$.
  \begin{lemma}\label{lem-stoch-lip}
Under Assumption \ref{assu-1}, there exist $C_{m}^{1},C_{m}^{2},C_{m}^{3},C_{m}^{4}>0$  such that for all $y,y_{1},y_{2}\in\mathbb{C}$ and all  $z\in\RR^{m}$: $|z|_{\RR^m}\leq 1$,
 \begin{align}
  |G(z,y)|&\leq C_{m}^{1}|z|_{\RR^m}|y|\label{G-est-growth}\\
   |G(z,y_1)-G(z,y_2)|&\leq C_{m}^{2}|z|_{\RR^m}|y_1-y_2|\label{G-est-Lip}\\
     |H(z,y)|&\leq C_{m}^{3}|z|^2_{\RR^m}|y|\\
   |H(z,y_1)-H(z,y_2)|&\leq C_{m}^{4}|z|^2_{\RR^m}|y_1-y_2|
  \end{align}
  \end{lemma}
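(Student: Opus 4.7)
The plan is to derive all four bounds from the integral/Taylor representations of $\Phi(1,z,y)$ coming from the defining ODE \eqref{phi-equation}, combined with the norm–preservation identity \eqref{eqn-norm conservation} and the two Lipschitz hypotheses in Assumption \ref{assu-1}.

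For the $G$–estimates, I would first rewrite
\begin{align*}
G(z,y) = \Phi(1,z,y) - y = -\i \int_0^1 \sum_{j=1}^m z_j\, g_j(\Phi(s,z,y))\,\d s.
\end{align*}
Since $g_j(y) = \tilde g_j(|y|^2)y$ vanishes at $0$, the Lipschitz condition \eqref{Lip_of_g} gives $|g_j(w)| \le L_1 |w|$. Combining this with Lemma \ref{lem-norm conservation}, which says $|\Phi(s,z,y)| = |y|$, yields $|g_j(\Phi(s,z,y))| \le L_1 |y|$, and hence $|G(z,y)| \le L_1 \sqrt{m}\,|z|_{\RR^m}\,|y|$, proving \eqref{G-est-growth}. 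For \eqref{G-est-Lip}, subtract the integral identity for $y_1$ and $y_2$, apply \eqref{Lip_of_g}, and use Gronwall's inequality to obtain $|\Phi(s,z,y_1)-\Phi(s,z,y_2)| \le e^{L_1\sqrt{m}|z|_{\RR^m}}|y_1-y_2|$; since $|z|_{\RR^m}\le 1$, this exponential is a constant and \eqref{G-est-Lip} follows after one more integration.

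For the $H$–estimates the key observation is that, by the first‑order Taylor formula applied to $s\mapsto \Phi(s,z,y)$ at $s=0$,
\begin{align*}
H(z,y) = \Phi(1,z,y) - y - \Phi_s(0,z,y) = \int_0^1 (1-s)\,\Phi_{ss}(s,z,y)\,\d s,
\end{align*}
using $\Phi_s(0,z,y) = -\i\sum_j z_j g_j(y)$. Differentiating the ODE \eqref{phi-equation} once more and invoking the computation in Remark (which identifies $(\i g_j)'(y)(-\i g_k(y))$ with $m_{jk}(y)=\tilde g_j(|y|^2)\tilde g_k(|y|^2)y$), I get
\begin{align*}
\Phi_{ss}(s,z,y) = -\sum_{j,k=1}^m z_j z_k\, m_{jk}(\Phi(s,z,y)).
\end{align*}
Since $m_{jk}(0)=0$, the Lipschitz property \eqref{Lip_of_m_ji} (in its equivalent form given in the Remark) gives $|m_{jk}(w)|\le L_2|w|$, so norm preservation produces $|\Phi_{ss}(s,z,y)| \le L_2 m\,|z|_{\RR^m}^2\,|y|$, and integrating yields $|H(z,y)|\le \tfrac{1}{2}L_2 m\,|z|_{\RR^m}^2\,|y|$. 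For the Lipschitz bound \eqref{Lip_of_m_ji} (equivalent form) combined with the Gronwall estimate on $|\Phi(s,z,y_1)-\Phi(s,z,y_2)|$ already used above gives $|m_{jk}(\Phi(s,z,y_1)) - m_{jk}(\Phi(s,z,y_2))| \le L_2 e^{L_1\sqrt{m}}|y_1-y_2|$, and integrating yields the desired bound for $|H(z,y_1)-H(z,y_2)|$.

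The only nontrivial step is the identification of $\Phi_{ss}$ with the $m_{jk}$ term, but this is essentially written out in the Remark preceding the lemma, so no real obstacle arises; the remainder is bookkeeping with Gronwall and the elementary inequality $\sum_j |z_j|\le \sqrt{m}\,|z|_{\RR^m}$.
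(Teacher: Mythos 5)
Your proof is correct and follows the same basic strategy as the paper: integrate the defining ODE \eqref{phi-equation} to represent $G$ and $H$, use \eqref{Lip_of_g} together with $g_j(0)=0$ and $m_{jk}(0)=0$ to get linear growth, and control $|\Phi(s,z,y_1)-\Phi(s,z,y_2)|$ by Gronwall for the two Lipschitz bounds. The two places where you deviate are both harmless simplifications. First, for the growth estimates you invoke Lemma \ref{lem-norm conservation} to get $|\Phi(s,z,y)|=|y|$ directly, whereas the paper bounds $|\Phi|\le |y|+|\mathcal{G}|$ and runs Gronwall a second time; your route is shorter and gives the cleaner constants $L_1\sqrt{m}$ and $\tfrac12 L_2 m$ (the paper's constants carry an extra exponential factor, which is immaterial since $|z|_{\RR^m}\le 1$). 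Second, your Taylor-remainder identity $H(z,y)=\int_0^1(1-s)\,\Phi_{ss}(s,z,y)\,\d s$ with $\Phi_{ss}=-\sum_{j,k}z_jz_k\,m_{jk}(\Phi)$ is, after Fubini, exactly the paper's iterated integral $\int_0^1\!\int_0^a \sum_{j,k}z_jz_k\,m_{jk}(\Phi(b,z,y))\,\d b\,\d a$; the identification of the second derivative with $m_{jk}$ is indeed the computation recorded in the Remark following Assumption \ref{assu-1}. The only point worth making explicit is that $s\mapsto\Phi(s,z,y)$ is $C^2$, which follows from $\tilde g_j\in C^1$ via the ODE; the paper uses this implicitly as well. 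No gaps.
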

 \begin{proof}
 Using the Cauchy-Schwartz inequality  we deduce
 \begin{align*}
 |\mathcal{G}(s,z,y)|=|\Phi(s,z,y)-y|&=\Big|\int_0^s \sum_{j=1}^m -\i z_jg_j(\Phi(a,z,y))\d a\Big|\\
 &\leq  |z|_{\RR^m}\int_0^s \Big(\sum_{j=1}^{m}|g_j(\Phi(a,z,y))|^2\Big)^{\frac12}\d a\\
 &\leq  |z|_{\RR^m}L_{1}m^{\frac12}\int_0^s|\Phi(a,z,y))|\d a\\
  &\leq |z|_{\RR^m}L_{1}m^{\frac12}|y|s+ |z|_{\RR^m}L_{1}m^{\frac12}\int_0^s|\mathcal{G}(a,z,y)|\d a,
 \end{align*}
 where in the last two steps we used assumption \eqref{Lip_of_g} and the definition of $\mathcal{G}$.
Applying the Gronwall lemma yields
  \begin{align}\label{lem-est-eq-10}
     |\mathcal{G}(s,z,y)|&\leq sm^{\frac12}L_{1}|z|_{\RR^m}|y|e^{ m^{\frac12}sL_{1}|z|_{\RR^m}},
  \end{align}
  which proves \eqref{G-est-growth} with $s=1$.
Similarly, we have
  \begin{align}
  \begin{split}\label{lem-est-eq-11}
     |\mathcal{G}(s,z,y_1)-\mathcal{G}(s,z,y_2)|&=|\Phi(s,z,y_1)-y_1-\Phi(s,z,y_2)+y_2|\\
     &=\Big| \int_0^s\sum_{j=1}^m -\i z_j \Big(g_j(\Phi(a,z,y_1))-g_j(\Phi(a,z,y_2))\Big)   \d a    \Big|\\
     &\leq |z|_{\RR^m}\int_0^s \Big(\sum_{j=1}^{m}|g_j(\Phi(a,z,y_1))-g_j(\Phi(a,z,y_2))|^2\Big)^{\frac12}\d a\\
     &\leq m^{\frac12}|z|_{\RR^m}L_{1}\int_0^s|\Phi(a,z,y_1)-\Phi(a,z,y_2)|\d a.
 \end{split}
  \end{align}
  Taking the second and the last expressions of the above inequality we deduce that
  \begin{align*}
         |\Phi(s,z,y_1)-\Phi(s,z,y_2)|\leq |y_1-y_2|+m^{\frac12}|z|_{\RR^m}L_{1}\int_0^s|\Phi(a,z,y_1)-\Phi(a,z,y_2)|^2\d a.
  \end{align*}
  Applying the Gronwall inequality we get
  \begin{align}\label{lem-est-eq-12}
  |\Phi(s,z,y_1)-\Phi(s,z,y_2)|\leq |y_1-y_2|e^{sm^{\frac12}|z|_{\RR^m}L_{1}}.
  \end{align}
 The required result \eqref{G-est-Lip} is obtained on inserting \eqref{lem-est-eq-12} back into \eqref{lem-est-eq-11} and putting $s=1$.
  Observe that
  \begin{align*}
     |H(z,y)|&=|\Phi(1,z,y)-y+\i\sum_{j=1}^mz_jg_j(y) |\\
     &=\Big| \int_0^1  -\sum_{j=1}^m z_j \Big[\i g_j (\Phi(a,l,y))-\i g_j(y)\Big]  \d a\Big| \\
      &=\Big| \int_0^1  \sum_{j=1}^m z_j \int_0^a\Big[\frac{\d (\i g_j)}{\d\Phi}(\Phi(b,z,y))\Big]\Big(-\i\sum_{k=1}^m z_kg_k(\Phi(b,z,y))\Big)\d b  \d a\Big|.
      \end{align*}
      We now apply the Cauchy-Schwartz inequality, Assumption \ref{assu-1} and \eqref{lem-est-eq-10} to obtain
      \begin{align*}
      &|H(z,y)|=|\Phi(1,z,y)-y+\i\sum_{j=1}^mz_jg_j(y) |\\
       & \leq \int_0^1 \int_0^a \Big| \sum_{j=1}^m z_j \Big(\sum_{k=1}^m|z_k|^2\Big)^{\frac12}\Big(\sum_{k=1}^m  \Big|  \tilde{g}_j(|\Phi(b,z,y)|^2)\tilde{g}_k(|\Phi(b,z,y)|^2)\Phi(b,z,y)\Big|^2 \Big)^{\frac12}\d b \d a\\
        &\leq L_{2}|z|^2_{\RR^m}\int_0^1\int_0^a \Big(\sum_{j=1}^m \sum_{i=1}^m  |\Phi(b,z,y)|^2  \Big)^{\frac12}\d b \d a\\
        &\leq mL_{2}|z|^2_{\RR^m}\int_0^1\int_0^a \Big(|y|+|\mathcal{G}(b,z,y)|  \Big)\d b \d a\\
        &=\frac{m}{2}L_{2}|z|^2_{\RR^m}|y|+mL_{2}|z|^2_{\RR^m}\int_0^1\int_0^a bm^{\frac12}L_{1}|z|_{\RR^m}|y|_{\CC}e^{bm^{\frac12}L_{1}|z|_{\RR^{m}}}\d b\d a\\
        &=\frac{m}{2}L_{2}|z|^2_{\RR^m}|y|+Cm^{\frac32}L_{2}L_{1}|z|^2_{\RR^m}|y|\int_0^1\int_0^a be^{m^{\frac12}L_{1}b}\d b\d a\\
        &\leq C_{m}^{3}z|^2_{\RR^m}|y|,
  \end{align*}
  where $C_{m}^{3}=\frac{m}{2}L_{2}+L_{1}L_{2}m^{\frac32}K\int_0^1\int_0^a be^{m^{\frac12}L_{1}b}\d b\d a$.

  A similar argument using assumption \eqref{Lip_of_m_ji} yields
    \begin{align*}
     |H(z,y_1)-H(z,y_2)|
   &=|\Phi(1,z,y_1)-y_1+\i\sum_{j=1}^mz_jg_j(y_1)-\Phi(1,z,y_2)+y_2-\i\sum_{j=1}^mz_jg_j(y_2) |\\
     &=\Big|  \int_0^1 \sum_{j=1}^m z_j \Big[\i g_j (\Phi(a,z,y_1))-\i g_j(y_1)-\i g_{j}(\Phi(a,z,y_2)))+\i g_j(y_2)\Big]  \d a\Big| \\
      &=\Big|\int_0^1 \sum_{j=1}^m z_j \int_0^a\Big[\frac{\d (\i g_j)}{\d\Phi}(\Phi(b,z,y_1))(-\i)\sum_{k=1}^m z_kg_k(\Phi(b,z,y_1))\\
      &\hspace{3cm}-\frac{\d (\i g_j)}{\d\Phi}(\Phi(b,z,y_2))(-\i)\sum_{k=1}^m z_kg_k(\Phi(b,z,y_2))\Big]\d b  \d a\Big|\\
      &\leq \int_0^1 \int_0^a \Big| \sum_{j=1}^m z_j \sum_{k=1}^m z_k\Big[\frac{\d (\i g_j)}{\d\Phi}(\Phi(b,z,y_1))(-\i g_k)(\Phi(b,z,y_1))\\
      &\hspace{4cm} -\frac{\d (\i g_j)}{\d\Phi}(\Phi(b,z,y_2))(-\i g_k)(\Phi(b,z,y_2)) \Big]\Big|\d b \d a\\
         &\leq |z|^2_{\RR^m}\int_0^1\int_0^a \Big(\sum_{j=1}^m \sum_{k=1}^m \Big|\frac{\d (\i g_j)}{\d\Phi}(\Phi(b,z,y_1))(-\i)\sum_{k=1}^m z_kg_k(\Phi(b,z,y_1))\\
         &-\frac{\d (\i g_j)}{\d\Phi}(\Phi(b,z,y_2))(-\i)\sum_{k=1}^m z_kg_k(\Phi(b,z,y_2))\Big)\Big|^2  \Big)^{\frac12}\d b \d a\\
        &\leq L_{2}|z|^2_{\RR^m}\int_0^1\int_0^a \Big(\sum_{j=1}^m \sum_{k=1}^m  \big|\Phi(b,z,y_1)-\Phi(b,z,y_2)\big|^2  \Big)^{\frac12}\d b \d a\\
       &\leq mL_{2}|z|^2_{\RR^m}\int_0^1\int_0^a  |y_1-y_2|e^{bm^{\frac12}|z|_{\RR^m}L_{1}}\d b \d a\\
       &\leq C_{m}^{4}|z|^2_{\RR^m}|y_1-y_2|,
  \end{align*}
  where $C_{m}^{4}=mL_{2}\int_0^1\int_0^a  e^{bm^{\frac12}L_{1}}\d b \d a$ and  the proof of Lemma \ref{lem-stoch-lip} is concluded.
   \end{proof}

  \begin{proposition}\label{prop_sto_stri_2} Under Assumption \ref{assu-1}, for every $T>0$ the function $\Psi_2^{R}$ maps $M^p_{\mathbb{F}}(Y_T)$ into itself and for all $u_{1},u_{2}\in Y_{T}$ we have
  \begin{align*}
   \EE\|\Psi_2^{R}(u_1)-\Psi_2^{R}(u_2)\|_{Y_T}^p
 &\leq C_{p,m}(T^{\frac{p}{2}}+T)\EE\|u_1-u_2\|^p_{Y_T}.
  \end{align*}

  \end{proposition}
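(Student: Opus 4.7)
The proof will decompose the $Y_T$ norm of $\Psi_2^R(u_1) - \Psi_2^R(u_2)$ into its two pieces and apply a different stochastic estimate to each. Concretely, writing the difference as
\begin{align*}
[\Psi_2^R(u_1)-\Psi_2^R(u_2)](t) = \int_0^t\int_B S_{t-s}\bigl[G(z,u_1(s-))-G(z,u_2(s-))\bigr]\tilde N(\d s,\d z),
\end{align*}
we estimate $\mathbb{E}\sup_{t\in[0,T]}\|\cdot\|_{L^2}^p$ by means of Proposition \ref{prop-str-ineq-2}, and $\mathbb{E}\|\cdot\|_{L^p(0,T;L^r)}^p$ by means of the new stochastic Strichartz estimate \eqref{str-ineq-stoch} from Proposition \ref{prop-str-ineq-11}, both applied with $q=p$ to the predictable integrand $\xi(s,z)=G(z,u_1(s-))-G(z,u_2(s-))$.

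Both applications produce two $\nu$-type terms: an $L^2$-in-$(s,z)$ term raised to the $p/2$ power, and an $L^p$-in-$(s,z)$ term. The Lipschitz bound \eqref{G-est-Lip} from Lemma \ref{lem-stoch-lip} gives
\begin{align*}
\|G(z,u_1(s))-G(z,u_2(s))\|_{L^2(\RR^n)}\leq C_m^2\,|z|_{\RR^m}\|u_1(s)-u_2(s)\|_{L^2(\RR^n)},
\end{align*}
and on $B=\{|z|\leq 1\}$ one has $|z|^p\leq |z|^2$ for $p\geq 2$, so both $\int_B |z|^2\nu(\d z)$ and $\int_B |z|^p\nu(\d z)$ are finite by the standing hypothesis on $\nu$. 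Combining these, the Lipschitz integrands are controlled pointwise in $s$ by $\|u_1(s)-u_2(s)\|_{L^2}\leq \|u_1-u_2\|_{Y_T}$, which allows the $s$-integrals to yield factors $T^{p/2}$ (from the $L^2$-in-$s$ piece raised to the $p/2$) and $T$ (from the $L^p$-in-$s$ piece). Adding the $L^\infty L^2$ and $L^pL^r$ contributions, and applying $(a+b)^p\leq 2^{p-1}(a^p+b^p)$ to the $Y_T$-norm definition, produces the asserted bound $C_{p,m}(T^{p/2}+T)\,\mathbb{E}\|u_1-u_2\|_{Y_T}^p$.

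The mapping property $\Psi_2^R:M_{\mathbb{F}}^p(Y_T)\to M_{\mathbb{F}}^p(Y_T)$ follows by the same argument applied with $u_2\equiv 0$, where the Lipschitz bound \eqref{G-est-Lip} is replaced by the linear growth bound \eqref{G-est-growth}, namely $\|G(z,u(s))\|_{L^2}\leq C_m^1|z|\|u(s)\|_{L^2}$; the stochastic integral is $\mathbb{F}$-adapted and càdlàg in $L^2(\RR^n)$ by standard theory of stochastic integrals against compensated Poisson random measures in martingale type $2$ Banach spaces.

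The only delicate point, and the main obstacle I would have flagged without Proposition \ref{prop-str-ineq-11}, is handling the $L^p(0,T;L^r(\RR^n))$ component of the $Y_T$ norm: the time variable enters both through the outer $L^p$ norm and through the group $S_{t-s}$ inside the stochastic integral, so a naive Minkowski argument loses too much. The Strichartz-type inequality \eqref{str-ineq-stoch} is tailor-made to close this loop by transferring the integrand back to its $L^2(\RR^n)$ norm via \eqref{eq-stri-1}, which is exactly what makes the final estimate match the deterministic bound \eqref{est-phi-1} in structure.
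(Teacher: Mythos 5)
Your proposal is correct and follows essentially the same route as the paper: the $L^\infty(0,T;L^2)$ component is handled by the maximal inequality of Proposition \ref{prop-str-ineq-2}, the $L^p(0,T;L^r)$ component by the stochastic Strichartz estimate \eqref{str-ineq-stoch}, both with $q=p$ and the integrand controlled via the Lipschitz and growth bounds of Lemma \ref{lem-stoch-lip} together with $\int_B|z|^p\nu(\d z)\le\int_B|z|^2\nu(\d z)<\infty$. No gaps.
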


  \begin{proof}  Let us choose and fix $T>0$.   Take $u\in Y_{T}$.
By using  Proposition \ref{prop-str-ineq-2} and Lemma \ref{lem-stoch-lip} we have
  \begin{align*}
  \EE\sup_{t\in[0,T]}\|\Psi_2^{R}(u)(t)\|^p_{L^2(\RR^n)}
  &\leq C_p\, \EE\Big(\int_0^T \int_B \|\Phi(z,u(s))-u(s)\|_{L^2(\RR^{n})}^2\nu(\d z)\d s\Big)^{\frac{p}{2}}\\
  &\quad+C_p\, \EE\Big(\int_0^T \int_B\|\Phi(z,u(s))-u(s)\|_{L^2(\RR^{n})}^p\nu(\d z)\d s\Big)\\
  &\leq C_{p,m}\, \EE\Big(\int_0^{\tau}\int_B|z|^2_{\RR^m} \|u(s)\|_{L^2(\RR^{n})}^2\nu(\d z)\d s\Big)^{\frac{p}{2}}\\
  &\quad+C_{p,m}\, \EE\Big(\int_0^{\tau}\int_B|z|^p_{\RR^m} \|u(s)\|^p_{L^2(\RR^{n})}\nu(\d z)\d s\Big)\\
   & \leq C_{p,m}(T^{\frac{p}{2}}+T)\E\|u\|^p_{Y_{T}}\Big(\Big(\int_B|z|^p_{\RR^m}\nu(\d z)\Big)^{\frac{p}{2}}+\int_B|z|^2_{\RR^m}\nu(\d z) \Big)\\
    &\leq  C_{p,m}(T^{\frac{p}{2}}+T) \E\|u\|^p_{Y_{T}}
  \end{align*}
  from which we also deduce that $\Psi_2^{R}(u)$ is $L^{2}(\RR^{n})$-valued adapted with c\`adl\`ag modification, see e.g.\cite[Theorem 3.1]{Br+Liu+Zhu2019}.
Here we also used the fact that $\int_B|z|^p_{\RR^m}\nu(\d z) \leq \int_B|z|^2_{\RR^m}\nu(\d z)<\infty$ for $p\geq 2$.

By applying Proposition \ref{prop-str-ineq-11} and Lemma \ref{lem-stoch-lip} we get
      \begin{align*}
  \EE\|\Psi_2^{R}(u)\|_{L^p(0,T;L^r(\mathbb{R}^n))}^p
  &\leq C_{p}\, \EE\Big(\int_0^T \int_B \|\Phi(z,u(s))-u(s)\|_{L^2(\RR^{n})}^2\nu(\d z)\d s\Big)^{\frac{p}{2}}\\
  &\quad+C_{p}\,\EE\Big(\int_0^T \int_B\|\Phi(z,u(s))-u(s)\|_{L^2(\RR^{n})}^p\nu(\d z)\d s\Big)\\
  &\leq C_{p,m}\EE\Big(\int_0^{T}\int_{B}|z|_{\RR^m}^2|u(s)|^2_{L^2(\RR^n)}\nu(\d z)\d t\Big)^{\frac{p}{2}}\\
  &\quad+C_{p,m}\EE\Big(\int_0^{T}\int_{B}|z|_{\RR^m}^p|u(s)|^p_{L^2(\RR^n)}\nu(\d z)\d t\Big)\\
    &\leq C_{p,m}(T^{\frac{p}{2}}+T)\EE\|u\|^p_{Y_{T}}.
  \end{align*}
Thus we infer that
  \begin{align*}
 \EE\|\Psi_2^{R}(u)\|^p_{Y_T}
 \leq C_{p,m}(T^{\frac{p}{2}}+T)\EE\|u\|^p_{Y_{T}}.
  \end{align*}
   Take now $u_1,u_2\in Y_T$. By Propositions \ref{prop-str-ineq-2} and  \ref{prop-str-ineq-11}, Lemma \ref{lem-stoch-lip}, applying similar arguments as before we obtain
   \begin{align*}
 &  \EE\sup_{t\in[0,T]}\|\Psi_2^{R}(u_1)(t)-\Psi_2^{R}(u_2)(t)\|^p_{L^2(\RR^n)}\\
   \leq &C_{p}\,\EE\Big(\int_0^T\int_{B}\|G(z,u_{1}(s-))-G(z,u_{2}(s-))\|_{L^2(\RR^n)}^{2}\nu(\d z)\d s\Big)^{\frac{p}2}\\
  &+C_{p}\,\EE\int_0^T\int_{B}\|G(z,u_{2}(s-))-G(z,u_{2}(s-))\|_{L^2(\RR^n)}^{p}\nu(\d z)\d s\\
    \leq& C_{p,m}(T^{\frac{p}{2}}+T)  \E\|u_1-u_2\|^p_{L^{\infty}([0,T];L^2(\RR^n))}\Big(\Big(\int_B|z|^2_{\RR^m}\nu(\d l)\Big)^{\frac{p}{2}}+\int_B|z|^p_{\RR^m}\nu(\d z) \Big)\\
    \leq&  C_{p,m}(T^{\frac{p}{2}}+T) \E\|u_1-u_2\|^p_{Y_{T}},
  \end{align*}
and
    \begin{align*}
  &\EE\|\Psi_2^{R}(u_1)-\Psi_2^{R}(u_2)\|_{L^p(0,T;L^r(\mathbb{R}^n))}^p\\
 &\leq C_{p}\, \EE\Big(\int_0^T\int_{Z}\|G(z,u_1(s))-\theta_R(\|u_{2}\|_{Y_s})G(z,u_2(s))\|^2_{L^2(\RR^n)}\nu(\d z)\d t\Big)^{\frac{p}{2}}\\
  &\quad+C_{p}\,\EE\Big(\int_0^T\int_{Z}\|G(z,u_1(s))-\theta_R(\|u_{2}\|_{Y_s})G(z,u_2(s))\|^p_{L^2(\RR^n)}\nu(\d z)\d t\Big)\\
  &\leq  C_{p,m}(T^{\frac{p}{2}}+T) \E\|u_1-u_2\|^p_{Y_{T}}.  \end{align*}
  Combining the above estimates gives that
      \begin{align*}
\EE\|\Psi_2^{R}(u_1)-\Psi_2^{R}(u_2)\|_{Y_T}^p
 \leq C_{p,m}(T^{\frac{p}{2}}+T)\EE\|u_1-u_2\|^p_{Y_T}.
  \end{align*}
  This concludes the proof of Proposition \ref{prop_sto_stri_2}.
  \end{proof}
  \begin{proposition}\label{prop_sto_stri_3} Suppose that Assumption \ref{assu-1} hold. Then for every $T>0$ the function $\Psi_3^{R}(u)$ maps $Y_T$  into itself and for all $u_{1},u_{2}\in Y_{T}$ we have
  \begin{align*}
    \|\Psi_3^{R}(u_{1})-\Psi_3^{R}(u_{2})\|_{Y_T}
&\leq C_{m}T \|u_1-u_2\|_{Y_T}.
  \end{align*}

  \end{proposition}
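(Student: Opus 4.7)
The plan is to treat $\Psi_3^R(u)$ as an ordinary (pathwise) time convolution $\int_0^t S_{t-s}\tilde{v}(s)\,\d s$ with
\[
\tilde{v}(s):=\int_B \Big[\Phi(z,u(s))-u(s)+\i\sum_{j=1}^m z_j g_j(u(s))\Big]\nu(\d z)=\int_B H(z,u(s))\,\nu(\d z),
\]
and then reduce the bound for $\|\Psi_3^R(u)\|_{Y_T}$ to an $L^\infty(0,T;L^2(\RR^n))$-bound for $\tilde{v}$, as prepared in Remark \ref{remark-stri-eq-1}.

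First I would use Lemma \ref{lem-stoch-lip} to control the $L^2(\RR^n)$-norm of the integrand. By Minkowski's inequality applied to the Bochner integral over $B$ followed by \eqref{G-est-growth}--\eqref{Lip_of_m_ji},
\[
\|\tilde{v}(s)\|_{L^2(\RR^n)}\leq \int_B \|H(z,u(s))\|_{L^2(\RR^n)}\,\nu(\d z)\leq C_m^3\Big(\int_B |z|_{\RR^m}^2\,\nu(\d z)\Big)\|u(s)\|_{L^2(\RR^n)},
\]
which is finite since $\int_B |z|_{\RR^m}^2\,\nu(\d z)<\infty$ by assumption on $\nu$. Hence $\tilde{v}\in L^\infty(0,T;L^2(\RR^n))$ with
$\|\tilde{v}\|_{L^\infty(0,T;L^2(\RR^n))}\leq C_m\|u\|_{Y_T}$.

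Next I would handle the two components of the $Y_T$-norm separately. For the $L^\infty(0,T;L^2)$ piece, the unitarity of $(S_t)$ and Minkowski yield
\[
\sup_{t\in[0,T]}\|\Psi_3^R(u)(t)\|_{L^2(\RR^n)}\leq \int_0^T\|\tilde{v}(s)\|_{L^2(\RR^n)}\,\d s\leq C_m T\,\|u\|_{Y_T}.
\]
For the $L^p(0,T;L^r(\RR^n))$ piece, I would invoke the computation from Remark \ref{remark-stri-eq-1}, which, from the homogeneous Strichartz estimate \eqref{eq-stri-1} and Minkowski's integral inequality, gives
\[
\Big\|\int_0^{\cdot} S_{\cdot-s}\tilde{v}(s)\,\d s\Big\|_{L^p(0,T;L^r(\RR^n))}\leq CT\,\|\tilde{v}\|_{L^\infty(0,T;L^2(\RR^n))}\leq C_m T\,\|u\|_{Y_T}.
\]
Summing the two bounds yields $\|\Psi_3^R(u)\|_{Y_T}\leq C_m T\,\|u\|_{Y_T}$, and in particular $\Psi_3^R$ maps $Y_T$ into itself.

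Finally, for the Lipschitz estimate I would repeat the argument verbatim with $u$ replaced by $u_1-u_2$ in the $L^2$-bounds, using \eqref{Lip_of_m_ji} in place of \eqref{G-est-growth}: Lemma \ref{lem-stoch-lip} gives
$\|H(z,u_1(s))-H(z,u_2(s))\|_{L^2(\RR^n)}\leq C_m^4|z|_{\RR^m}^2\|u_1(s)-u_2(s)\|_{L^2(\RR^n)}$,
so the same two-step estimate produces
\[
\|\Psi_3^R(u_1)-\Psi_3^R(u_2)\|_{Y_T}\leq C_m T\,\|u_1-u_2\|_{Y_T}.
\]
The whole argument is essentially routine once the integrand is recognized as $L^2$-valued with a linear-in-$u$ bound; the only mildly delicate step is the inhomogeneous $L^p(L^r)$ bound, but this is already prepared in Remark \ref{remark-stri-eq-1}, so I do not anticipate a substantive obstacle. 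Note that neither the truncation $\theta_R$ nor any nonlinearity of order higher than one in $u$ enters here, which is why the estimate is linear in $\|u\|_{Y_T}$ and scales as $T$ (not $T^{1-n\sigma/2}$ as in Proposition \ref{prop_sto_stri_1}).
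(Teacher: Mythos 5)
Your proposal is correct and follows essentially the same route as the paper: both reduce the problem to the pointwise bound $\|\int_B H(z,u(s))\,\nu(\d z)\|_{L^2(\RR^n)}\leq C_m\big(\int_B|z|^2_{\RR^m}\nu(\d z)\big)\|u(s)\|_{L^2(\RR^n)}$ from Lemma \ref{lem-stoch-lip}, handle the $L^\infty(0,T;L^2)$ component by unitarity of $(S_t)$ and Minkowski, and handle the $L^p(0,T;L^r)$ component via the deterministic convolution bound prepared in Remark \ref{remark-stri-eq-1}. The only quibble is a citation slip (the relevant inequalities are the growth and Lipschitz bounds for $H$ in Lemma \ref{lem-stoch-lip}, not \eqref{G-est-growth} and \eqref{Lip_of_m_ji}), which does not affect the argument.
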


  \begin{proof}  Let us choose and fix $T>0$.   Take $u\in Y_{T}$.   Observe that
  \begin{align*}
  \sup_{0\leq t\leq T}\|\Psi_3^{R}(u)(t)\|_{L^2(\mathbb{R}^n)}&=\sup_{t\in[0,T]}\Big\|\int_0^t\int_BS_{t-s} H(z,u(s))
  \nu(\d l)\d s\Big\|_{L^2(\RR^n)}\\
  &\leq C \int_0^{T}\int_B\|H(z,u(s))\|_{L^2(\RR^n)}\nu(\d z)\d s\\
  &\leq C_{m}\int_0^{T}\Big(\int_B|z|^2_{\RR^m}\nu(\d z)\Big)\|u(s)\|_{L^2(\RR^n)}\d s\\
  &\leq C_{m}T \|u\|_{Y_{T}}\Big(\int_B|z|^2_{\RR^m}\nu(\d z)\Big).
  \end{align*}
  By applying Remark \ref{remark-stri-eq-1} and Lemma \ref{lem-stoch-lip} we obtain
    \begin{align*}
  \|\Psi_3^{R}(u)\|_{L^p(0,T;L^r(\mathbb{R}^n))}&=\Big\|\int_0^t\int_BS_{t-s}H(z,u(s))\nu(\d z)\d s\Big\|_{L^p(0,T;L^r(\RR^n))}\\
  &=\Big\|\int_0^tS_{t-s}\Big(\int_BH(z,u(s))\nu(\d z)\Big)\d s\Big\|_{L^p(0,T;L^r(\RR^n))}\\
&\leq C\int_0^{T}\Big{\|}\int_BH(z,u(s))\nu(\d z)\Big{\|}_{L^2(\RR^n)}\d s\\
&\leq C_{m}\int_0^{T}\Big(\int_B|z|^2_{\RR^m}\nu(\d z)\Big)\|u(s)\|_{L^2(\RR^n)}\d s\\
&\leq C_{m}T \|u\|_{Y_{T}}\Big(\int_B|z|^2_{\RR^m}\nu(\d z)\Big).
  \end{align*}
  It follows that
  \begin{align*}
  \|\Psi_3^{R}(u)\|_{Y_T}\leq C_{m}T\|u\|_{Y_T}.
  \end{align*}
  Take now $u_1,u_2\in Y_T$.    Again by using Remark \ref{remark-stri-eq-1} and Lemma \ref{lem-stoch-lip}, we can show that
    \begin{align*}
&\sup_{0\leq t\leq T} \|\Psi_3^{R}(u_1)(t)-\Psi_3^{R}(u_2)(t)\|_{L^2(\mathbb{R}^n)}\\
  &\leq C \int_0^T\int_B\|H(z,u_1(s))-H(z,u_2(s))\|_{L^2(\RR^n)}\nu(\d z)\d s\\
  &\leq C_{m}\int_0^{T}\Big(\int_B|z|^2_{\RR^m}\nu(\d z)\Big)\|u_1(s)-u_2(s)\|_{L^2(\RR^n)}\d s\\
  &\leq C_{m}T \|u_1-u_2\|_{Y_{T}},
    \end{align*}
and
    \begin{align*}
 & \|\Psi_3^{R}(u_1)-\Psi_3^{R}(u_2)\|_{L^p(0,T;L^r(\mathbb{R}^n))}\\
  &=\Big\|\int_0^tS_{t-s}\Big(\int_B H(z,u_1(s))-H(z,u_2(s))\nu(\d z)\Big)\d s\Big\|_{L^p(0,T;L^r(\RR^n))}\\
&\leq C\int_0^T\Big\|\int_B H(z,u_1(s))-H(l,u_2(s))\nu(\d z)\Big\|_{L^2(\RR^n)}\d s\\
&\leq C_{m}\int_0^T\Big(\int_B|z|^2_{\RR^m}\nu(\d z)\Big)^{\frac12}\|u_1(s)-u_2(s)\|_{L^2(\RR^n)}\d s\\
&\leq C_{m}T\|u_1-u_2\|_{Y_{T}}.
  \end{align*}
  Combining the above estimates gives that
      \begin{align*}
  \|\Psi_3^{R}(u_{1})-\Psi_3^{R}(u_{2})\|_{Y_T}
&\leq C_{m}T \|u_1-u_2\|_{Y_T}.
  \end{align*}
The proof of Proposition \ref{prop_sto_stri_3} is now complete.
  \end{proof}

We are now ready to present the main result in this section.
  \begin{proposition}\label{prop-truncated equation} Let $p\geq 2$, $0<\sigma<\frac{2}{n}$, $u_0\in L^p(\Omega;L^2(\RR^n))$ and $(p,r)$ be an admissible pair with $r=2\sigma+2$.
    Under Assumption \ref{assu-1}, for every $T>0$ there exists a unique global solution $u^R$ in $L^p\big(\Omega;L^{\infty}(0,T;L^2(\RR^n))\cap L^p(0,T;L^r(\RR^n))\big)$ to equation \eqref{SEE-trucated-1}.
  \end{proposition}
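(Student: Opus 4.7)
The plan is to apply the Banach fixed point theorem in the Banach space $M_{\mathbb{F}}^{p}(Y_{T_{1}})$ for a sufficiently small $T_{1}=T_{1}(R)>0$, and then to concatenate $\lceil T/T_{1}\rceil$ such local solutions to obtain a solution on all of $[0,T]$. For $u\in M_{\mathbb{F}}^{p}(Y_{T_{1}})$ I define
\begin{align*}
\mathcal{T}(u)(t):=S_{t}u_{0}+[\Psi_{1}^{R}(u)](t)+[\Psi_{2}^{R}(u)](t)+[\Psi_{3}^{R}(u)](t),\quad t\in[0,T_{1}].
\end{align*}
Since $g_{j}(0)=\tilde{g}_{j}(0)\cdot 0=0$ by Assumption \ref{assu-1}, the Marcus map satisfies $\Phi(z,0)=0$, hence $\Psi_{i}^{R}(0)=0$ for $i=1,2,3$ and $\mathcal{T}(0)=S_{\cdot}u_{0}$. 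Proposition \ref{prop-strichartz-1}(1) together with the unitarity of $(S_{t})$ on $L^{2}(\RR^{n})$ then gives $\E\|\mathcal{T}(0)\|_{Y_{T_{1}}}^{p}\le C\,\E\|u_{0}\|_{L^{2}(\RR^{n})}^{p}<\infty$; $\mathbb{F}$-progressive measurability and the c\`adl\`ag $L^{2}(\RR^{n})$-modification of the stochastic convolution follow as in \cite[Theorem 3.1]{Br+Liu+Zhu2019}. These observations ensure that $\mathcal{T}$ maps $M_{\mathbb{F}}^{p}(Y_{T_{1}})$ into itself.

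The next step is the contraction estimate. Combining Proposition \ref{prop_sto_stri_1} (raised to the $p$-th power path-by-path), Proposition \ref{prop_sto_stri_2}, and Proposition \ref{prop_sto_stri_3} (again raised to the $p$-th power) with the elementary inequality $(a+b+c)^{p}\le 3^{p-1}(a^{p}+b^{p}+c^{p})$, I obtain, for every $u_{1},u_{2}\in M_{\mathbb{F}}^{p}(Y_{T_{1}})$,
\begin{align*}
\E\|\mathcal{T}(u_{1})-\mathcal{T}(u_{2})\|_{Y_{T_{1}}}^{p}\le K(R,T_{1})\,\E\|u_{1}-u_{2}\|_{Y_{T_{1}}}^{p},
\end{align*}
with
\begin{align*}
K(R,T_{1})=C_{p,\sigma,m,\lambda,n}\Bigl(R^{2\sigma p}\,T_{1}^{p(1-n\sigma/2)}+T_{1}^{p/2}+T_{1}+T_{1}^{p}\Bigr).
\end{align*}
Because the subcriticality condition $\sigma<2/n$ makes every exponent of $T_{1}$ in $K(R,T_{1})$ strictly positive, I can choose $T_{1}=T_{1}(R)$ so small that $K(R,T_{1})\le 1/2$. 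The Banach fixed point theorem then produces a unique $u^{R}\in M_{\mathbb{F}}^{p}(Y_{T_{1}})$ solving \eqref{SEE-trucated-1} on $[0,T_{1}]$.

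The decisive observation is that the length $T_{1}$ depends only on $R$ and the fixed data of the problem, not on the (random) initial datum $u_{0}$. Consequently, using $u^{R}(T_{1})\in L^{p}(\Omega,\mathcal{F}_{T_{1}};L^{2}(\RR^{n}))$ as a new initial condition together with the shifted filtration $\mathbb{F}^{T_{1}}$, the shifted Poisson random measure $N_{T_{1}}$, and Corollary \ref{prop-str-ineq-1} in place of Proposition \ref{prop-str-ineq-11}, the very same contraction argument produces a unique solution on $[T_{1},2T_{1}]$. Iterating this procedure $\lceil T/T_{1}\rceil$ times and pasting the pieces together via the identity \eqref{changle-variable-SC} yields the required $u^{R}\in M_{\mathbb{F}}^{p}(Y_{T})$; uniqueness on $[0,T]$ follows interval by interval from the uniqueness of each fixed point. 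The step I expect to require most care is purely bookkeeping: verifying that the piecewise construction glues into a single $\mathbb{F}$-adapted c\`adl\`ag process satisfying \eqref{SEE-trucated-1} on all of $[0,T]$, which is exactly the situation for which the shifted stochastic Strichartz estimate \eqref{str-ineq-stoch-2} was designed.
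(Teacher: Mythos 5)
Your overall strategy --- a Banach fixed point argument in $M^p_{\mathbb{F}}(Y_{T_1})$ on a short interval whose length depends only on $R$, followed by iteration with the shifted filtration, the shifted Poisson random measure $N_{T_1}$ and Corollary \ref{prop-str-ineq-1} --- is exactly the paper's, and your contraction constant matches. However, there is a genuine gap in the iteration step, which you dismiss as ``purely bookkeeping.'' The truncation in \eqref{SEE-trucated-1} is $\theta_R(\|u\|_{Y_s})$ with $Y_s=Y_{[0,s]}$: it is \emph{non-local in time} and carries the memory of the whole trajectory from $t=0$. When you restart at $T_1$ with initial datum $u^R(T_1)$ and run ``the very same contraction argument'' on $[T_1,2T_1]$, the natural operator on the new interval would involve $\theta_R$ of the norm of the \emph{new} piece only; the concatenated process then fails to satisfy \eqref{SEE-trucated-1} on $[0,2T_1]$, because at times $s>T_1$ the equation requires the cutoff evaluated at $\|u\|_{Y_{[0,s]}}$, not at $\|u\|_{Y_{[T_1,s]}}$. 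This is not a measurability or gluing issue resolvable by \eqref{changle-variable-SC}; it changes the coefficient of the drift term $\Psi_1$.

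The paper handles this by introducing, at the $k$-th stage, a modified cutoff $\Theta^R_k(u)(t)=\theta_R(\phi(u)(t))$, where $\phi(u)(t)$ combines the $L^p(0,kT_0;L^r)$-norm and the running supremum of the already-constructed solution $u^R_k$ on $[0,kT_0]$ with the corresponding norms of the unknown $u$ on $[0,t]$, so that $\Theta^R_k(v^R_{k+1})(s)=\theta_R(\|u^R_{k+1}\|_{Y_{kT_0+s}})$ for the concatenated process. One must then re-verify the Lipschitz estimate for $\Psi_1^{R,k}$ with this modified cutoff (it goes through with the same constants, since $\theta_R$ is $1/R$-Lipschitz and $\phi$ dominates the relevant norms) and check explicitly that the pasted process is a fixed point of the \emph{original} operator $\Gamma^R$ on $[0,(k+1)T_0]$. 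Your proof is missing this device entirely; without it the induction does not produce a solution of \eqref{SEE-trucated-1} beyond the first interval. The rest of your argument (the self-map property via $g_j(0)=0$ and hence $\Phi(z,0)=0$, the contraction constants, and the uniformity of $T_1$ in the initial datum) is sound.
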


  \begin{proof}We will carry this out in two stages.

 \emph{Step 1.}
Define an operator by
  \begin{align*}
       \Gamma^R(u)(t):=S_tu_0+\Psi_1^{R}(u)(t)+\Psi_2^{R}(u)(t)+\Psi_3^{R}(u)(t),\;\; t\in[0,T].
  \end{align*}
  We will construct a unique solution by the Banach fixed point theorem. Combining Remark \ref{remark-stri-eq-1},  Proposition \ref{prop_sto_stri_1}, \ref{prop_sto_stri_2} and \ref{prop_sto_stri_3}, we see that for every $T$, the operator $\Gamma^R$ maps from $M^p_{\mathbb{F}}(Y_T)$ into $M^p_{\mathbb{F}}(Y_T)$.
 Now let us show that  if  $T$ is sufficiently small, which will be determined later, then this operator is a strict contraction in the space $M_{\mathbb{F}}^p(Y_T)$. It follows again from  Proposition \ref{prop_sto_stri_1}, \ref{prop_sto_stri_2} and \ref{prop_sto_stri_3} that
 \begin{align*}
 \|\Gamma^R(u_1)-\Gamma^R(u_2)\|_{M^p_{\mathbb{F}}(Y_T)}\leq \Big[  C_{\sigma}|\lambda|R^{2\sigma}T^{1-\frac{n\sigma}{2}}+ C_{p,m}(T^{\frac{p}{2}}+T)   +C_{m}  T \Big] \|u_1-u_2\|_{M^p_{\mathbb{F}}(Y_T)}.
 \end{align*}
     If we choose $T_{0}$ sufficiently small (depending on $R,\sigma,p,m$) such that
     \begin{align*}
        C_{\sigma}|\lambda|R^{2\sigma}T_{0}^{1-\frac{n\sigma}{2}}+ C_{p,m}(T_{0}^{\frac{p}{2}}+T_{0})   +C_{m} T_{0} \leq \frac12,
     \end{align*}
    then $\Gamma^R$ is a $\frac12$-contraction in the space $M^p_{\mathbb{F}}(Y_{T_{0}})$. Hence by the Banach fixed point theorem, there exists a unique solution $u^R\in M^p_{\mathbb{F}}(Y_{T_{0}})$ satisfying $u^R=\Gamma^R(u^R)$. Note that one can always find a c\`adl\`ag modification of $\Psi_2^{R}(u^{R})$ in $L^{2}(\RR^{n})$, see e.g. \cite{Zhu+Brz+Hau2017,Br+Liu+Zhu2019}. We will identify $\Psi_2^{R}(u^{R})$ with this modification. It follows that the solution $u^{R}$ is c\`adl\`ag in $L^{2}(\RR^{n})$. This is the unique solution in $M^p_{\mathbb{F}}(Y_{T_{0}})$ of equation \eqref{SEE-trucated-1} restricted to $[0,T_{0}]$.

     \emph{Step 2.} We will extend the solution to $[0,T]$ by induction, c.f. \cite{Brz+Peng+Zhai_2021}.
     Define $j=[\frac{T}{T_{0}}+1]$. Assume that for some $k\in\{1,2,\cdots,j\}$ there exists $u^R_{k}\in M^p_{\mathbb{F}}(Y_{T_{0}})$ such that
     \begin{align*}
     u^R_{k}=\Gamma^R(u_{k}^R) \text{   on }[0,kT_{0}].
     \end{align*}
     First let us define a new cutoff function by
     \begin{align*}
     \Theta^{R}_{k}(u)(t):=\theta_R(\phi(u)(t)),
     \end{align*}
     where
     \begin{align*}
     \phi(u)(t)=&\Big(\|u_{k}^{R}\|^{p}_{L^{p}(0,kT_{0};L^{r}(\RR^{n}))}+\|u\|^{p}_{L^{p}(0,t;L^{r}(\RR^{n}))}\Big)^{\frac1p}\\
     &+\max\big{\{}\sup\limits_{0\leq t\leq kT_0} \|u_{k}^{R}(t)\|_{L^{2}(\RR^{n})},\sup\limits_{0\leq s\leq t} \|u(s)\|_{L^{2}(\RR^{n})}\big{\}}.
     \end{align*}
    Consider the following operator
     \begin{align}
         \Gamma^R_{k}(u)(t):= S_tu_k^{R}(kT_{0})+\Psi_1^{R,k}(u)(t)+\Psi_2^{R,k}(u)(t)+\Psi_3^{R,k}(u)(t),\;\; u\in M^{p}_{\mathbb{F}^{kT_{0}}}(Y_{T_{0}}).
     \end{align}
     where for $t\in[0,T_{0}]$ and $u\in M^{p}_{\mathbb{F}^{kT_{0}}}(Y_{T_{0}})$,
       \begin{align*}
  [\Psi_1^{R,k}(u)](t)=&-\i\int_0^tS_{t-s}(\Theta^{R}_{k}(u)(s))\big(\lambda |u(s)|^{2\sigma}u(s)\big)\d s,\\
    [\Psi_2^{R,k}(u)](t) = & \int_0^t\int_BS_{t-s}\big[\Phi(z,u_{s-})-u_{s-}\big]\tilde{N}^{kT_{0}}(\d s,\d l),\\
     [\Psi_3^{R,k}(u)](t) =&\int_0^t\int_BS_{t-s}\big[\Phi(z,u_{s})-u_{s}+\i\sum_{j=1}^mz_jg_j(u_s)\big]\nu(\d z)\d s.
  \end{align*}
  All the arguments in Step 1 can be reproduced. Take $v_1,v_2\in M^{p}_{\mathbb{F}^{kT_{0}}}(Y_{T_{0}})$. Let us define $\tau_i=\inf\{t\geq 0:\phi(v_i)(t)> 2R\}\wedge T_{0}$, $i=1,2$. Without loss of generality we may assume that $\tau_1\leq \tau_2$.
By following the same line of argument as used in the proof of Proposition \ref{prop_sto_stri_1}, we infer that
    \begin{align*}
         & \|\Psi_1^{R,k}(v_1)-\Psi_1^{R,k}(v_2)\|_{{Y_{T_{0}}}}\\
          &\leq 2|\lambda|C\Big\|\Theta^{R}_{k}(v_{1})(s)|v_1(s)|^{2\sigma}v_1(s)-\Theta^{R}_{k}(v_{2})(s)|v_2(s)|^{2\sigma}v_2(s)\Big\|_{L^{p'}(0,T;L^{r'}(\RR^n))}\\
          &\leq 2|\lambda|C\Big\Vert \big(\Theta^{R}_{k}(v_{1})(s)-\Theta^{R}_{k}(v_{2})(s))\big)|v_2(s)|^{2\sigma}v_2(s)\Big\Vert_{L^{p'}(0,\tau_2;L^{r'}(\RR^n))}\\
           &\hspace{0.5cm}+2|\lambda|C\Big\Vert\Theta^{R}_{k}(v_{1})(s)\big(|v_1(s)|^{2\sigma}v_1(s)-|v_2(s)|^{2\sigma}v_2(s)\big)\Big\|_{L^{p'}(0,\tau_1;L^{r'}(\RR^n))}\\
           &\leq 2|\lambda|\frac{C}{R}T_{0}^{1-\frac{n\sigma}{2}}\|v_1-v_2\|_{Y_{T_{0}}}\left\|v_2\right\|^{2\sigma+1}_{L^{p}(0,\tau_2;L^{r}(\RR^n))}\\
           &+2|\lambda|C_{\sigma}T_{0}^{1-\frac{n\sigma}{2}}\left(\|v_1\|_{L^{p}(0,\tau_1;L^{r}(\RR^n))}+\|v_2\|_{L^{p}(0,\tau_1;L^{r}(\RR^n))}\right)^{2\sigma}\|v_1-v_2\|_{L^{p}(0,\tau_1;L^{r}(\RR^n))}\\
          & \leq 4|\lambda|C T_{0}^{1-\frac{n\sigma}{2}}(2R)^{2\sigma}\|v_1-v_2\|_{Y_{T_{0}}}+2|\lambda|C_{\sigma}T_{0}^{1-\frac{n\sigma}{2}} (4R)^{2\sigma}\|v_1-v_2\|_{Y_{T_{0}}}\\
          &\leq  C_{\sigma}|\lambda|R^{2\sigma}T_{0}^{1-\frac{n\sigma}{2}}\|v_1-v_2\|_{Y_{T_{0}}}.
  \end{align*}
 Now using the same argument as in the proof of Proposition \ref{prop_sto_stri_2} and \ref{prop_sto_stri_3} and applying Corollary \ref{prop-str-ineq-1}, we obtain
  \begin{align}
   \E \|\Psi_2^{R,k}(v_1)-\Psi_2^{R,k}(v_2)\|_{{Y_{T_{0}}}}^{p}&\leq C_{p,m}(T^{\frac{p}{2}}+T)\EE\|v_1-v_2\|^p_{Y_{T_{0}}}\\
        \|\Psi_3^{R,k}(v_1)-\Psi_3^{R,k}(v_2)\|_{{Y_{T_{0}}}}&\leq C_{m} T\|v_1-v_2\|^p_{Y_{T_{0}}}.
  \end{align}
Hence we conclude that
 \begin{align*}
 &\|\Gamma^R_{k}(v_1)-\Gamma^R_{k}(v_2)\|_{M^p_{\mathbb{F}^{kT_{0}}}(Y_{T_{0}})}\\&\leq \Big[    C_{\sigma}|\lambda|R^{2\sigma}T_{0}^{1-\frac{n\sigma}{2}}+ C_{p,m}(T_{0}^{\frac{p}{2}}+T_{0})   +C_{m} T_{0}  \Big] \|v_1-v_2\|_{M^p_{\mathbb{F}^{kT_{0}}}(Y_{T_{0}})}.
 \end{align*}
Note that the constant is the same as in the first step. It follows that $\Gamma^R_{k}$ is a $\frac12$-contraction in the space $M^p_{\mathbb{F}^{kT_{0}}}(Y_{T_{0}})$.  Let $v_{k+1}^{R}$ be the unique solution satisfying $ v^R_{k+1}=\Gamma^R_{k}(v_{k+1}^R)$.
Then we construct a solution as follows
\begin{align*}
    u_{k+1}^{R}(t)=\left\{
    \begin{array}{rcl} u_{k}^{R}(t), & \mbox{for} & t\in[0,kT_{0}]\\
    v_{k+1}^{R}(t-kT_{0}), & \mbox{for} & t\in [kT_{0},(k+1)T_{0}]
    \end{array}\right.
\end{align*}
and so on, recursively. Notice that $ u_{k+1}^{R}$ is $\mathbb{F}$-adapted, c\`{a}dl\`ag in $L^{2}(\RR^{n})$ and we have $\E\|u_{k+1}^{R}\|^{p}_{Y_{(k+1)T_{0}}}<\infty.$ Therefore, we obtain that $u_{k+1}^{R}\in M^p_{\mathbb{F}}(Y_{(k+1)T_{0}})$.

Now we shall show that $u_{k+1}^{R}$ is a fixed point of $\Gamma^R$ in $M^p_{\mathbb{F}}(Y_{(k+1)T_{0}})$. Let $t\in [kT_{0}, (k+1)T_{0}]$. Define $\hat{t}:=t-kT_{0}$. Then we have
\begin{align*}
   u_{k+1}^{R}(t)&=v_{k+1}^{R}(\hat{t})=\Gamma^R_{k}(v_{k+1}^R)(\hat{t})\\
   &=S_{\hat t}u_k^{R}(kT_{0})+\Psi_1^{R,k}(v_{k+1}^R)(\hat{t})+\Psi_2^{R,k}(v_{k+1}^R)(\hat{t})+\Psi_3^{R,k}(v_{k+1}^R)(\hat{t})\\
   &=S_{\hat t}S_{kT_{0}}u_{0}+S_{\hat t}\Psi_{1}^{R}(u_{k}^{R})(kT_{0})+\Psi_1^{R,k}(v_{k+1}^R)(\hat{t})+S_{\hat t}\Psi_{2}^{R}(u_{k}^{R})(kT_{0})+\Psi_2^{R,k}(v_{k+1}^R)(\hat{t})\\
   &\hspace{1cm}+S_{\hat t}\Psi_{3}^{R}(u_{k}^{R})(kT_{0})+\Psi_3^{R,k}(v_{k+1}^R)(\hat{t}).
\end{align*}
Observe that $\theta_{R}(\|u_{k}^{R}\|_{Y_{s}})=\theta(\|u_{k+1}^{R}\|_{Y_{s}})$ for $s\in [0,kT_{0}]$ and $\Theta_{k}^{R}(v_{k+1}^{R}(s))=\theta_{R}(\|u_{k+1}^{R}\|_{Y_{kT_{0}+s}})$ for $s\in [0,T_{0}]$. It follows that
\begin{align*}
        S_{\hat t}\Psi_{1}^{R}(u_{k}^{R})(kT_{0})+\Psi_1^{R,k}(v_{k+1}^R)(\hat{t})=&-\i S_{\hat t}\int_0^{kT_{0}}S_{kT_{0}-s}\big((\theta_R(\|u_{k}^{R}\|_{Y_s}))\lambda |u_{k}^{R}(s)|^{2\sigma}u_{k}^{R}(s)\big)\d s\\
        &-\i\int_0^{\hat{t}}S_{\hat t-s}\big(\Theta^{R}_{k}(v_{k+1}^{R})(s))\lambda |v_{k+1}^{R}(s)|^{2\sigma}v_{k+1}^{R}(s)\big)\d s\\
        =&-\i\int_{0}^{kT_{0}}S_{t-s}\big(\theta_{R}(\|u_{k+1}^{R}\|_{Y_{s}})\lambda |u_{k+1}^{R}(s)|^{2\sigma}u_{k+1}^{R}(s)\big)\d s\\
        &-\i\int_0^{\hat{t}}S_{\hat t-s}\big(\theta_{R}(\|u_{k+1}^{R}\|_{Y_{kT_{0}+s}})\lambda |u_{k+1}^{R}(kT_{0}+s)|^{2\sigma}u_{k+1}^{R}(kT_{0}+s)\big)\d s\\
        =&- \i\int_{0}^{t}S_{t-s}\big(\theta_{R}(\|u_{k+1}^{R}\|_{Y_{s}})\lambda |u_{k+1}^{R}(s)|^{2\sigma}u_{k+1}^{R}(s)\big)\d s\\
        =&=\Psi_1^{R}(u_{k+1}^R)(t).
\end{align*}
Similarly, by using \eqref{changle-variable-SC}, we can prove
\begin{align*}
    & S_{\hat t}\Psi_{2}^{R}(u_{k}^{R})(kT_{0})+\Psi_2^{R,k}(v_{k+1}^R)(\hat{t})=\Psi_2^{R}(u_{k+1}^R)(t)\\
    & S_{\hat t}\Psi_{3}^{R}(u_{k}^{R})(kT_{0})+\Psi_3^{R,k}(v_{k+1}^R)(\hat{t})=\Psi_2^{R}(u_{k+1}^R)(t).
\end{align*}
Therefore, we infer for $t\in [kT_{0}, (k+1)T_{0}]$
\begin{align*}
u_{k+1}^{R}(t)=S_{t-s}u_{0}+\Psi_1^{R}(u_{k+1}^R)(t)+\Psi_2^{R}(u_{k+1}^R)(t)+\Psi_3^{R}(u_{k+1}^R)(t),
\end{align*}
which shows that $u_{k+1}^{R}$ is a fixed point of $\Gamma^R$ in $M^p_{\mathbb{F}}(Y_{(k+1)T_{0}})$.  Therefore $u^{R}:=u^{R}_{j}$ is the unique solution to \eqref{SEE-trucated-1} on $[0,T]$.
  \end{proof}

By using the above results for the truncated problem \eqref{SEE-trucated-1}, we can derive the existence and the uniqueness of local mild solutions for the original equation \eqref{NSS-integral-form}. The following arguments are standard. One can also see \cite[Proposition 1]{Brz+Zhu2016} for analogous arguments of proving the existence result for maximal local mild solutions to stochastic nonlinear beam equations.
  \begin{proposition}\label{prop-local-mind-main}
     For each $k\in\mathbb{N}$, let $u_k\in M^p_{\mathbb{F}}(Y_T)$ be the solution of \eqref{SEE-trucated-1} provided by Proposition \ref{prop-truncated equation} with $R$ replaced by $k$. Define a stopping time $\tau_k$ by
     \begin{align}\label{eqn-tau_k}
        \tau_k=\inf\{t\in[0,T]:\|u^{k}\|_{Y_{t}}> k\},
     \end{align}
     with the usual convention $\inf \emptyset =T$.
       Then
          \begin{enumerate}
          \item[(1)] For $k\leq n$, we have $0<\tau_k\leq \tau_n$, $\mathbb{P}$-a.s. and $u^{k}(t)=u^{n}(t)$ $\mathbb{P}$-a.s. for $t\in[0,\tau_k]$.
       \item[(2)] Define $u(t)=u^{k}(t)$ for $t\in[0,\tau_k]$ and $\tau_{\infty}=\lim_{n\rightarrow\infty}\tau_k$. Then $(u(t))_{t\in[0,\tau_{\infty})}$ is a  maximal local mild solution of \eqref{NSS-integral-form}.
       \item[(3)] The solution $u$ is unique.
          \end{enumerate}
  \end{proposition}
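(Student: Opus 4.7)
The plan is to establish consistency among the truncated solutions $u^k$, then paste them together to obtain the desired maximal local mild solution, and finally appeal to the local uniqueness to close (3).

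For (1), fix $k \leq n$. On $[0, \tau_k]$ one has $\|u^k\|_{Y_t} \leq k \leq n$, so $\theta_k(\|u^k\|_{Y_t}) = \theta_n(\|u^k\|_{Y_t}) = 1$, meaning $u^k$ in fact satisfies the untruncated equation \eqref{NSS-integral-form} on $[0, \tau_k]$. Introduce the auxiliary stopping time $\rho_{k,n} := \tau_k \wedge \inf\{t \in [0, T] : \|u^n\|_{Y_t} > k\}$; on $[0, \rho_{k,n}]$ the truncation $\theta_n$ applied to $u^n$ also equals $1$, so both $u^k$ and $u^n$ are fixed points of the truncated operator $\Gamma^k$ restricted to this interval. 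Applying the contraction estimates from Propositions \ref{prop_sto_stri_1}--\ref{prop_sto_stri_3} with $R=k$ to the difference $u^k - u^n$ on a short deterministic window $[0, T_0]$, with $T_0$ depending only on $k$ and chosen so that the combined constant is at most $\tfrac12$, yields
\begin{align*}
\|u^k - u^n\|_{M^p_{\mathbb{F}}(Y_{T_0 \wedge \rho_{k,n}})} \leq \tfrac{1}{2}\|u^k - u^n\|_{M^p_{\mathbb{F}}(Y_{T_0 \wedge \rho_{k,n}})},
\end{align*}
hence $u^k = u^n$ on $[0, T_0 \wedge \rho_{k,n}]$. Iterating along the partition of $[0, T]$ by $T_0$-length intervals, using the shifted filtration $\mathbb{F}^{jT_0}$ and shifted Poisson measure $N_{jT_0}$ exactly as in Step 2 of Proposition \ref{prop-truncated equation}, extends the equality to $[0, \rho_{k,n}]$. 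Consequently $\|u^n\|_{Y_t} = \|u^k\|_{Y_t} \leq k$ on $[0, \rho_{k,n}]$, which forces $\rho_{k,n} = \tau_k$; then $u^k = u^n$ on $[0, \tau_k]$ and $\tau_k \leq \tau_n$ a.s.\ follow from the defining properties of $\tau_n$.

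For (2), part (1) makes $u(t) := u^k(t)$, $t \in [0, \tau_k]$, unambiguous, and $\tau_\infty := \lim_k \tau_k$ is a stopping time. On the event $\{\tau_\infty < T\}$ one has $\|u^k\|_{Y_{\tau_k}} \geq k$ so the $\tau_k$'s are strictly increasing to $\tau_\infty$; on $\{\tau_\infty = T\}$ one may instead use $\tau_\infty - 1/k$, producing an approximating sequence in either case. Each restriction $u|_{[0, \tau_k]} = u^k|_{[0, \tau_k]}$ inherits cadlag paths in $L^2(\mathbb{R}^n)$, $\mathbb{F}$-progressive measurability and $M^p_{\mathbb{F}}(Y_{\tau_k})$-integrability from $u^k$, and the integral identity defining a local mild solution holds on $[0, \tau_k]$ because $\theta_k(\|u^k\|_{Y_s}) \equiv 1$ there. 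Maximality follows by contradiction: if some local solution $v$ on $[0, \sigma)$ extends $u$ with $\sigma > \tau_\infty$ on a set of positive probability, then $\|v\|_{Y_{\tau_\infty}}$ is a.s.\ finite there, yet $\|v\|_{Y_{\tau_k}} = \|u^k\|_{Y_{\tau_k}} \geq k \to \infty$, a contradiction. Part (3) reduces to the same local contraction argument: for any other local mild solution $(v(t))_{t\in[0,\sigma)}$, comparing $v$ and $u^k$ on $[0, \tau_k \wedge \sigma \wedge \sigma_k(v)]$, where $\sigma_k(v) := \inf\{t : \|v\|_{Y_t} > k\}$, yields $v = u^k = u$ pathwise there, and letting $k \to \infty$ gives the required indistinguishability up to $\tau_\infty \wedge \sigma$.

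The main obstacle is the iterative extension step in (1): one must ensure that the Lipschitz constants in Propositions \ref{prop_sto_stri_1}--\ref{prop_sto_stri_3} depend only on the deterministic level $k$ and not on the sample path, that the change of variables \eqref{changle-variable-SC} under the shifted filtration $\mathbb{F}^{jT_0}$ remains compatible with the random stopping time $\rho_{k,n}$, and that truncating/stopping at each iteration preserves $\mathbb{F}$-progressive measurability of the resulting processes. Once these measurability and compatibility issues are verified, the remaining parts of the proposition are standard pasting and contradiction arguments.
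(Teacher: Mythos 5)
Your overall strategy coincides with the paper's: identify the truncated solutions on the stochastic interval where both truncations are inactive, paste, obtain maximality from the blow-up of $\|u\|_{Y_t}$ as $t\nearrow\tau_\infty$, and deduce uniqueness from the local identification. Parts (2) and (3) are fine at the level of rigour of the paper (which itself delegates the pasting details to \cite[Proposition 1]{Brz+Zhu2016}). However, there is one step in your part (1) that does not follow as written: the claim that your stopping time $\rho_{k,n}=\tau_k\wedge\inf\{t:\|u^n\|_{Y_t}>k\}$ equals $\tau_k$. Your argument only yields $u^k=u^n$ on $[0,\rho_{k,n}]$, and on the event $\{\rho_{k,n}<\tau_k\}$ all you can extract is $\|u^n\|_{Y_{\rho_{k,n}}}=\|u^k\|_{Y_{\rho_{k,n}}}\le k$ together with $\|u^n\|_{Y_{\rho_{k,n}}}\ge k$ (by right-continuity and monotonicity of $t\mapsto\|u^n\|_{Y_t}$), i.e. the norm merely \emph{touches} the level $k$; that is not a contradiction. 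You cannot push the identification past $\rho_{k,n}$ either, because beyond that time $\theta_k(\|u^n\|_{Y_s})$ may drop below $1$, so $u^n$ ceases to be a fixed point of $\Gamma^k$ — and this is forced on you precisely by your insistence that the contraction window $T_0$ depend only on $k$.

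The paper avoids this by working on $[0,\tau_{k,n})$ with $\tau_{k,n}=\tau_k\wedge\tau_n$, where \emph{both} $\theta_n(\|u^n\|_{Y_s})=1$ and $\theta_n(\|u^k\|_{Y_s})=1$ (since $\|u^k\|_{Y_s}\le k\le n$), so both processes are fixed points of the level-$n$ truncated equation there; the identification then uses the uniqueness for $\Gamma^n$ (contraction constant depending on $n$, which is harmless since $k,n$ are fixed), is extended to the closed interval $[0,\tau_{k,n}]$ by noting that the jump at $\tau_{k,n}$ is determined by the left limits, and finally the ordering $\tau_k\le\tau_n$ follows from the genuine contradiction $n\le\|u^n\|_{Y_{\tau_n}}=\|u^k\|_{Y_{\tau_n}}\le k<n$ on $\{\tau_n<\tau_k\}$. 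Replacing your $(\rho_{k,n},\Gamma^k)$ by $(\tau_k\wedge\tau_n,\Gamma^n)$ repairs your argument with no other changes; the remaining localization-at-a-stopping-time issues you flag at the end are the same ones implicitly present in the paper's proof and are handled in the standard way.
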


  \begin{proof}
  For any $n\in\mathbb{N}$, by Proposition \ref{prop-truncated equation}, there exists a unique global solution $u^n$ in $M^p_{\mathbb{F}}(Y_T)$ to equation \eqref{SEE-trucated-1} which satisfies
    \begin{align}
    \begin{split}
  &   u^{n}(t)=S_tu_0-\i\int_0^tS_{t-s}\big(\theta_n(\|u^{n}\|_{Y_s})\lambda |u^{n}(s)|^{2\sigma}u^{n}(s)\big)\d s\\
     &+\int_0^t\int_BS_{t-s}\Big[\Phi(z,u^{n}(s-))-u^{n}(s-)\Big]\tilde{N}(\d s,\d z)\\
     &+\int_0^t\int_BS_{t-s}\Big[\Phi(z,u^{n}(s))-u^{n}(s)+\i\sum_{j=1}^mz_jg_j(u^{n}(s))\Big]\nu(\d z)\d s, \quad \mathbb{P}\text{-a.s.}
     \end{split}
\end{align}
for $t\in[0,T]$.
 For $k\leq n$, set $\tau_{k,n}=\tau_k\wedge\tau_n$.  Hence by the definition of $\theta_{n}$,  we have $\theta_{n}(\|u^{n}\|_{Y_{t}})=1$ and  $\theta_{k}(\|u^{k}\|_{Y_{t}})=1$, for $t\in[0,\tau_{k,n})$. It follows that on $[0,\tau_{k,n})$ we have
     \begin{align}
     \begin{split}\label{prop-local-eq1}
     &u^{l}(t)=S_tu_0-\i\int_0^tS_{t-s}\big(\lambda|u^{l}(s)|^{2\sigma}u^{l}(s)\big)\d s+\int_0^t\int_BS_{t-s}\Big[\Phi(z,u^{l}(s-))-u^{l}(s-)\Big]\tilde{N}(\d s,\d z)\\
     &+\int_0^t\int_BS_{t-s}\Big[\Phi(z,u^{l}(s))-u^{l}(s)+\i\sum_{j=1}^mz_jg_j(u^{l}(s))\Big]\nu(\d z)\d s, \quad \mathbb{P}\text{-a.s. for }l=k,n.
     \end{split}
\end{align}
The uniqueness of the solution to equation \eqref{SEE-trucated-1} implies that  $u^{k}(t)=u^{n}(t)$ $\mathbb{P}$-a.s. on $\{t<\tau_{k,n}\}$.
 Let $\Phi(u^{l})$ denote the right hand side of \eqref{prop-local-eq1}.
Note that the value of $\Phi(u^{l})$ at $\tau_{k,n}$ depends only on the values of $u^{l}$ on $[0,\tau_{k,n})$.
  Hence we may extend the process $u^l$ from the interval  $[0,\tau_{k,n})$ to the closed interval  $[0,\tau_{k,n}]$ by setting
\begin{align}
\begin{split}\label{eq-250}
 &u^{l}(\tau_{k,n})=\Phi(u^{l})(\tau_{k,n})=S_{\tau_{k,n}}u_0-\i\int_0^{\tau_{k,n}}S_{\tau_{k,n}-s}\big(\lambda |(u^{l} (s)|^{2\sigma}(u^{l} (s)\big)\d s+I_{\tau_{k,n}}(\Phi(z,u^{l})-u^{l})(\tau_{k,n})\\
     &+\int_0^{\tau_{k,n}}\int_BS_{\tau_{k,n}-s}\Big[\Phi(z,u^{l}(s))-u^{l}(s)+\i\sum_{j=1}^mz_jg_j(u^{l}(s))\Big]\nu(\d z)\d s, \quad \mathbb{P}\text{-a.s.}
     \end{split}
\end{align}
where $I_{\tau_{k,n}}(\Phi(z,u)-u)$ is a process defined by
          \begin{align*}
         I_{\tau_{k,n}}(\Phi(z,u^{l})-u^{l})(t)= \int_0^t\int_B1_{[0,\tau_{k,n}]}S_{t-s}\Big[\Phi(z,u^{l}(s\wedge\tau_{k,n}-))-u^{l}(s\wedge\tau_{k,n}-)\Big]\tilde{N}(\d s,\d z),
          \end{align*}
          for $0  \leq t\leq T$.  Therefore,  combining the above two equalities \eqref{prop-local-eq1} and \eqref{eq-250}, we deduce that the stopped process $u^{l}(\cdot\wedge\tau_n)$ satisfies
       \begin{align}
     \begin{split}\label{prop-local-eq1-1}
     u^{l}(t\wedge\tau_{k,n})=&S_{t\wedge\tau_{k,n}}u_0-\i\int_0^{t\wedge\tau_{k,n}}S_{t\wedge\tau_{k,n}-s}\big(\lambda |u^{l}(s)|^{2\sigma}u^{l}(s)\big)\d s+I_{\tau_{k,n}}(\Phi(z,u^{l})-u^{l})(t\wedge\tau_{k,n})\\
     &+\int_0^{t\wedge\tau_{k,n}}\int_BS_{t\wedge\tau_{k,n}-s}\Big[\Phi(z,u^{l}_{s})-u^{l}_{s}+\i\sum_{j=1}^mz_jg_j(u^{l}_s)\Big]\nu(\d z)\d s, \quad \mathbb{P}\text{-a.s.}
     \end{split}
\end{align}
 Since
    $\triangle u^{l}(\tau_{k,n})= \int_B1_{[0,\tau_{k,n}]}S_{\tau_{k,n}-s}\Big[\Phi(z,u^{l}_{s\wedge\tau_{k,n}-})-u^{l}_{s\wedge\tau_{k,n}-}\Big]\tilde{N}(\{\tau_{k,n}\},\d z)$, for $l=k,n$
and $\Phi(z,u^{k}_{s\wedge\tau_{k,n}-})-u^{k}_{s\wedge\tau_{k,n}-}$ coincides with  $\Phi(z,u^{n}_{s\wedge\tau_{k,n}-})-u^{n}_{s\wedge\tau_{k,n}-}$ on $[0,\tau_{k,n}]$, we infer that
\begin{align}\label{eqn-X_n=X_m}
u^{k}=u^{n}\text{ on }[0,\tau_{k,n}].  \end{align}
Hence, by the contradiction argument, we can show that a.s.
\begin{align*}
\tau_k\leq \tau_n\ \text{if}\ k<n.
\end{align*}
 So the limit
$\lim_{n\to\infty}\tau_n=:\tau_\infty$ exists a.s. Let us denote
$\Omega_0=\{\omega:\lim_{n\to\infty}\tau_n=\tau_{\infty}\}$ and
note that $\mathbb{P}(\Omega_0)=1$.

 Now we define a local process
$(u(t))_{0\leq t<\tau_{\infty}}$ as follows. If
$\omega\notin\Omega_0$, set $u(t,\omega)=0$, for $0\leq t<\tau_{\infty}$.
If $\omega\in\Omega_0$, then for every $t<\tau_{\infty}(\omega)$, there exists  a number $n\in\mathbb{N}$
such that $t\leq \tau_n(\omega)$ and we
set $u(t,\omega)=u^n(t,\omega)$.  In view of \eqref{eqn-X_n=X_m} this process is well defined, $(u(t))_{t\in[0,\tau_{n}]}\in M^p_{\mathbb{F}}(Y_{\tau_{n}})$
and it satisfies for $t\in [0,T]$
       \begin{align}
     \begin{split}\label{prop-local-eq2}
     u(t\wedge\tau_{n})=&S_{t\wedge\tau_{n}}u_0-\i\int_0^{t\wedge\tau_{n}}S_{t\wedge\tau_{n}-s}\big(\lambda|u(s)|^{2\sigma}u(s)\big)   \d s+I_{\tau_{n}}(\Phi(z,u)-u)(t\wedge\tau_{n})\\
     &+\int_0^{t\wedge\tau_{n}}\int_BS_{t\wedge\tau_{n}-s}\Big[\Phi(z,u(s))-u(s)+\i\sum_{j=1}^mz_jg_j(u(s))\Big]\nu(\d z)\d s, \quad \mathbb{P}\text{-a.s.}
     \end{split}
\end{align}
where we used the fact that because of \eqref{eqn-X_n=X_m},  for all $0\leq t\leq T$,
\begin{align*}
                      I_{\tau_{n}}(\Phi(z,u)-u)(t)= I_{\tau_{n}}(\Phi(z,u^{n})-u^{n})(t).
\end{align*}
Furthermore, by the definition of the sequence
$\{\tau_n\}_{n=1}^{\infty}$ we infer that a.s. on the set $\{ \tau_{\infty}<\infty\}$,
\begin{align}\label{eq-334}
    \lim_{t \nearrow \tau_{\infty}}\|u\|_{Y_{t}}=\lim_{ n\rightarrow\infty} \|u\|_{Y_{\tau_{n}}}\geq \lim_{n}n=\infty\quad\mathbb{P}\text{-a.s.}
\end{align}
Using arguments similar to that in the proof of \cite[Proposition 1]{Brz+Zhu2016}, we can prove that $(u(t))_{t\in[0,\tau_{\infty})}$ is a maximal local mild solution of \eqref{NSS-integral-form}.

The uniqueness of the solution follows  from the construction of the solution and the uniqueness of the solution to the truncated equation.

 \end{proof}


  \section{Existence and uniqueness of a global solution to stochastic NLS in Marcus form}

  In this section, we shall prove the global existence of the original equation \eqref{SNLSE-Marcus}. To do that, first we show the $L^{2}(\RR^{n})$-norm of the solution is preserved. Then we establish uniform bounds for solutions of \eqref{SEE-trucated-1} in $L^{p}(0,T;L^r(\RR^n))$ by using again the deterministic and the stochastic Strichartz inequalities.
  \begin{proposition}\label{prop-L2-norm-preserved} Assume that Assumption \ref{assu-1} holds.
   Let $p\geq 2$, $0<\sigma<\frac{2}{n}$, $1\leq n<\infty$,
 $(p,r)$ be an admissible pair with $r=2\sigma+2$ and
          $u_0\in L^p(\Omega;L^2(\RR^n))$.  For $k\in \mathbb{N}$, let $u^{k}$ be the global mild solution of equation \eqref{SEE-trucated-1} with $R$ replaced by $k$. Then we have for all $t\in[0,T]$,
          \begin{align*}
          \|u^{k}(t)\|_{L^2(\RR^n)}=\|u_0\|_{L^2(\RR^n)}\quad \mathbb{P}\text{-a.s.}
          \end{align*}
  \end{proposition}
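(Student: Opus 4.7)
The strategy is to exploit the unitarity of the Schr\"odinger group and reduce the mild formulation of \eqref{SEE-trucated-1} to an It\^o semimartingale in which the unbounded operator $\i\Delta$ no longer appears. I would set $v(t) := S_{-t} u^k(t)$; since $(S_t)_{t\in\RR}$ is a group of $L^2(\RR^n)$-isometries by Assumption \ref{assu-1}(1), one has $\|v(t)\|_{L^2(\RR^n)} = \|u^k(t)\|_{L^2(\RR^n)}$, so it suffices to prove that $\|v(t)\|_{L^2(\RR^n)} = \|u_0\|_{L^2(\RR^n)}$ $\mathbb{P}$-a.s. Starting from \eqref{SEE-trucated-1} (with $R = k$), the semigroup identity $S_{-t}S_{t-s} = S_{-s}$ and a stochastic Fubini argument yield that $v$ is a c\`adl\`ag $L^2(\RR^n)$-valued semimartingale whose continuous drift is $-\i S_{-s}(\theta_k(\|u^k\|_{Y_s})\lambda|u^k(s)|^{2\sigma}u^k(s)) + \int_B S_{-s}H(z,u^k(s))\,\nu(\d z)$ and whose jump part is the compensated Poisson integral $\int_0^\cdot\int_B S_{-s}G(z,u^k(s-))\tilde{N}(\d s,\d z)$.

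Next, I would apply It\^o's formula for the $C^2$ functional $F(w) = \|w\|^2_{L^2(\RR^n)}$ on the Hilbert-valued jump semimartingale $v$ and analyse the three resulting contributions separately. The continuous nonlinear drift yields, using unitarity of $S_{-s}$ together with the pointwise identity \eqref{eqn-=0},
\[
2\Re\bigl\langle v(s),\,-\i S_{-s}\bigl(\theta_k\lambda|u^k|^{2\sigma}u^k\bigr)\bigr\rangle_{L^2} = -2\theta_k\lambda\,\Re\Bigl(\i\int_{\RR^n}|u^k(s,x)|^{2\sigma+2}\,\d x\Bigr) = 0.
\]
The compensated Poisson martingale term vanishes identically because its integrand $\|v(s-)+S_{-s}G\|^2_{L^2} - \|v(s-)\|^2_{L^2}$ reduces, by isometry of $S_{-s}$, to $\|\Phi(z,u^k(s-))\|^2_{L^2} - \|u^k(s-)\|^2_{L^2}$, which is zero by integrating over $\RR^n$ the pointwise equality $|\Phi(z,y)| = |y|$ from Lemma \ref{lem-norm conservation}. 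Finally, the It\^o compensator contribution $\int_B \|G(z,u^k)\|^2_{L^2}\,\nu(\d z)$ combines with the $H$-drift contribution $2\int_B\Re\langle u^k,H(z,u^k)\rangle_{L^2}\,\nu(\d z)$; writing $H = G + \i\sum_j z_j g_j(u^k)$ and noting the pointwise identity $\Re\langle u^k(x),\,\i g_j(u^k(x))\rangle = \tilde g_j(|u^k(x)|^2)\,\Re[\i|u^k(x)|^2] = 0$, the combined contribution collapses to $\int_B(\|\Phi\|^2_{L^2} - \|u^k\|^2_{L^2})\,\nu(\d z)$, which vanishes again by Lemma \ref{lem-norm conservation}.

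The principal technical obstacle is the rigorous justification of both the stochastic Fubini step that produces the interaction-picture equation for $v$ and the application of It\^o's formula in the Hilbert-valued jump setting. The required integrability hypotheses are supplied by the growth bounds $|G(z,y)|\le C|z|_{\RR^m}|y|$ and $|H(z,y)|\le C|z|^2_{\RR^m}|y|$ from Lemma \ref{lem-stoch-lip}, the small-jump moment assumption $\int_B|z|^2_{\RR^m}\nu(\d z) < \infty$, and the bound $u^k\in M^p_\mathbb{F}(Y_T)$ from Proposition \ref{prop-truncated equation}. Once these integrability checks are in place, the three cancellations above force $\|v(t)\|^2_{L^2} = \|u_0\|^2_{L^2}$ $\mathbb{P}$-a.s. for every $t\in[0,T]$, which yields the asserted $L^2$-norm conservation.
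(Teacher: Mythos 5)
Your proof is correct, but it takes a genuinely different route from the paper. You remove the unbounded operator by passing to the interaction picture $v(t)=S_{-t}u^k(t)$ (using that $(S_t)$ is a unitary group, so $S_{-t}$ commutes with the stochastic and Lebesgue integrals and $S_{-t}S_{t-s}=S_{-s}$), after which $v$ is an honest $L^2(\RR^n)$-valued jump semimartingale with bounded coefficients and It\^o's formula for $\|\cdot\|^2_{L^2}$ applies directly. The paper instead keeps $u^k$ itself and regularizes via the Yosida approximation: it replaces the coefficients by $\mu(\mu I-A)^{-1}(\cdot)$ so that the approximating equation has a strong solution $u^k_\mu$, applies It\^o's formula using the skew-self-adjointness $\Re\langle u,\i\Delta u\rangle_{L^2}=0$, and then passes to the limit $\mu\to\infty$ with dominated convergence. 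Both arguments reduce to exactly the same three algebraic cancellations — $\Re\langle u,\i\lambda|u|^{2\sigma}u\rangle=0$, $\Re\langle u,\i\sum_j z_jg_j(u)\rangle=0$, and $\|\Phi(z,u)\|_{L^2}=\|u\|_{L^2}$ from Lemma \ref{lem-norm conservation}, the last of which makes both the compensated jump integrand and the combined compensator-plus-$H$-drift term vanish. Your route buys a cleaner argument with no approximation or limit passage, at the modest cost of justifying the commutation of $S_{-t}$ with the Poisson stochastic integral (essentially free for a bounded operator, though strictly speaking it is not a stochastic Fubini step); the paper's route is heavier but does not rely on the generator producing a group of isometries, so it generalizes to dissipative settings. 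Your integrability checks via Lemma \ref{lem-stoch-lip} and $\int_B|z|^2\,\nu(\d z)<\infty$ are the right ones.
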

  \begin{proof}
    According to the Yosida approximating argument, we can always approximate \eqref{SEE-trucated-1} by equations having strong solutions. To do this, let us introduce
    \begin{align*}
         f^k_\mu(t)&=\mu(\mu I-A)^{-1}\theta_{k}(\|u^{k}\|_{Y_{t}})\lambda |u^{k}(t)|^{2\sigma}u^{k}(t),\\
         G_\mu^{k}(z,t)&=\mu(\mu I-A)^{-1}[\Phi(z,u^{k}(t-))-u^{k}(t-)],
         \end{align*}
         and
         \begin{align*}
         H_\mu^{k}(z,t)&=\mu(\mu I-A)^{-1}[\Phi(z,u^{k}(t-))-u^{k}(t-)+\i\sum_{j=1}^{m}z_jg_j(u^{k}(t))].
    \end{align*}
  Then the equation
   \begin{align*}
          &\d u^{k}_{\mu}(t)=i\Delta u_{\mu}^{k}(t)+ f^k_\mu(t)\d t+\int_{B} G_\mu^{k}(z,t)\tilde{N}(\d t,\d z)+\int_{B}  H_\mu^{k}(z,t)\nu(\d z)\d t,\\
          &u^{k}_{\mu}(0)=\mu(\mu I-A)^{-1}u(0),
 \end{align*}
 has a unique strong solution. By the properties of the Yosida approximation,  it's easy to see that
 \begin{align}\label{con-u-lambda}
      \lim_{\mu\rightarrow\infty}\E\sup_{t\in[0,T]}\|u^{k}_{\mu}(t)-u^{k}(t)\|_{L^2(\RR^n)}^{2}=0.
 \end{align}
 Define a function $\psi: L^2(\RR^n)\ni u\mapsto \frac12 \|u\|^2_{L^2(\RR^n)}=\frac12\int_{\RR^n} u(x)\overline{u(x)}dx\in\mathbb{R}$. Then we have
  \begin{align*}
  \psi'(u)(v)=\Re\langle u,v\rangle_{L^2(\RR^n)}=\int_{\RR^n}Re(u(x)\overline{v(x)})\d x.
  \end{align*}
  Applying the It\^{o} formula to the function $\psi$ and the strong solution $u^{k}_{\mu}$,
  we obtain
  \begin{align*}
  &\psi(u^{k}_{\mu}(t))-\psi(u_{\mu}^{k}(0))\\
  =&\int_0^{t}\langle \psi'(u^{k}_{\mu}(s)),\i(\Delta u^{k}_{\mu}(s)-f^{k}_{\mu}(s)\rangle_{L^2} \d s\\
    &+\int_0^{t}\int_B\Big[\psi(u_{\mu}^{k}(s-)+G_{\mu}^{k}(z,t))-\psi(u_{\mu}^{k}(s-))\Big]\tilde{N}(\d s,\d z)\\
    &+\int_0^{t}\int_B\Big[\psi(u_{\mu}^{k}(s-)+G_{\mu}^{k}(z,t))-\psi(u_{\mu}^{k}(s-))-\langle \psi'(u^{k}_{\mu}(s)),G_\mu^{k}(z,t)\rangle _{L^2(\RR^n)}
\Big]\nu(\d z)\d s\\
  &+\int_0^{t}\int_B\Big[\langle \psi'(u_{\mu}^{k}(s)),H_{\mu}^{k}(z,t)\rangle _{L^2(\RR^n)}
\Big]\nu(\d z)\d s\\
  =&\int_0^{t}\Re\langle u_{\mu}^{k}(s),\i\Delta u^{k}_{\mu}(s)\rangle_{L^2(\RR^n)}\d s
        -\int_0^{t} \Re\langle u_{\mu}^{k}(s),\i f^{k}_{\mu}(s) \rangle_{L^2(\RR^n)} \d s\\
            &+\int_0^{t}\int_B \Big[\frac12\Big\|u_{\mu}^{k}(s-)+G_{\mu}^{k}(z,t)\Big\|^2_{L^2(\RR^n)}-\frac12\|u_{\mu}^{k}(s-)\|^2_{L^2(\RR^n)}\Big]\tilde{N}(\d s,\d z)\\
        &+\int_0^{t}\int_B  \Big[\frac12\Big\|u_{\mu}^{k}(s-)+G_{\mu}^{k}(z,t)\Big\|^2_{L^2(\RR^n)}-\frac12\|u_{\mu}^{k}(s-)\|^2_{L^2(\RR^n)}-\Re\big{\langle}u^{k}_{\mu}(s), G_{\mu}^{k}(z,t)\big{\rangle}_{L^2(\RR^n)}\big]\nu(\d z)\d s\\
        &+\int_0^{t}\int_B  \Re\big{\langle} u^{k}_{\mu}(s),H_{\mu}^{k}(z,s)\big{\rangle}_{L^2(\RR^n)}  \nu(\d z)\d s\\
         =&
        -\int_0^{t} \Re\langle u_{\mu}^{k}(s),\i f^{k}_{\mu}(s) \rangle_{L^2(\RR^n)} \d s\\
            &+\int_0^{t}\int_B \Big[\frac12\Big\|u_{\mu}^{k}(s-)+G_{\mu}^{k}(z,t)\Big\|^2_{L^2(\RR^n)}-\frac12\|u_{\mu}^{k}(s-)\|^2_{L^2(\RR^n)}\Big]\tilde{N}(\d s,\d z)\\
        &+\int_0^{t}\int_B  \Big[\frac12\Big\|u_{\mu}^{k}(s-)+G_{\mu}^{k}(z,t)\Big\|^2_{L^2(\RR^n)}-\frac12\|u_{\mu}^{k}(s-)\|^2_{L^2(\RR^n)}\\
       &\hspace{5cm} +\Re\big{\langle}u^{k}_{\mu}(s), \i\sum_{j=1}^{m}z_{j}g_{j}(u^{k}(s))\big{\rangle}_{L^2(\RR^n)}\big]\nu(\d z)\d s,
  \end{align*}
  where we used the fact that $\Re\langle u(s),i\triangle u(s)\rangle_{L^2}=0$, since $i\Delta$ is skew-self-adjoint in $L^2(\RR^n)$.

By using \eqref{con-u-lambda}, the It\^{o} isometry property  of the stochastic integral and the Lebesgue Dominated Convergence Theorem, we get as $\mu\rightarrow\infty$ (passing to a subsequence if necessary)
\begin{align*}
     & \int_0^{t} \Re\langle u_{\mu}^{k}(s),\i \theta_{k}(\|u^{k}\|_{Y_{s}})f^{k}_{\mu}(s) \rangle_{L^2(\RR^n)} \d s\\
     &\longrightarrow \int_0^{t} \Re\langle u^{k}(s),\i \theta_{k}(\|u^{k}\|_{Y_{s}})\lambda |u^{k}(s)|^{2\sigma}u^{k}(s) \rangle_{L^2(\RR^n)} \d s\quad\mathbb{P}\text{-a.s.}\\
    &  \int_0^{t}\int_B \Big[\frac12\Big\|u_{\mu}^{k}(s-)+G_{\mu}^{k}(z,t)\Big\|^2_{L^2(\RR^n)}-\frac12\|u_{\mu}^{k}(s-)\|^2_{L^2(\RR^n)}\Big]\tilde{N}(\d s,\d z)\\
    &\longrightarrow \int_0^{t}\int_B \Big[\frac12\|\Phi(z,u^{k}(s-)) \|^2_{L^2(\RR^n)}-\frac12\|u^{k}(s-)\|^2_{L^2(\RR^n)}\Big]\tilde{N}(\d s,\d z)\quad\mathbb{P}\text{-a.s.}\\
    &\int_0^{t}\int_B  \Big[\frac12\Big\|u_{\mu}^{k}(s-)+G_{\mu}^{k}(z,t)\Big\|^2_{L^2(\RR^n)}-\frac12\|u_{\mu}^{k}(s-)\|^2_{L^2(\RR^n)}\\
       &\hspace{4cm} +\Re\big{\langle}u^{k}_{\mu}(s), \theta_{k}(\|u^{k}\|_{Y_{t}})\i\sum_{j=1}^{m}z_{j}g_{j}(u^{k}(s))\big{\rangle}_{L^2(\RR^n)}\big]\nu(\d z)\d s\\
        &\longrightarrow \int_0^{t}\int_B  \Big[\frac12\|\Phi(z,u^{k}(s-)) \|^2_{L^2(\RR^n)}-\frac12\|u^{k}(s-)\|^2_{L^2(\RR^n)}
       \\
       &\hspace{4cm} +\Re\big{\langle}u^{k}(s),\theta_{k}(\|u^{k}\|_{Y_{t}}) \i\sum_{j=1}^{m}z_{j}g_{j}(u^{k}(s))\big{\rangle}_{L^2(\RR^n)}\big]\nu(\d z)\d s\quad\mathbb{P}\text{-a.s.}
\end{align*}
Therefore, using these limiting results and \eqref{con-u-lambda}, we obtain
 \begin{align*}
  &\psi(u^{k}(t))-\psi(u^{k}(0))
   =- \int_0^{t} \Re\langle u^{k}(s),\i \theta_{k}(\|u^{k}\|_{Y_{s}})\lambda |u^{k}(s)|^{2\sigma}u^{k}(s) \rangle_{L^2(\RR^n)} \d s\\
            &+\int_0^{t}\int_B \Big[\frac12\|\Phi(z,u^{k}(s-)) \|^2_{L^2(\RR^n)}-\frac12\|u^{k}(s-)\|^2_{L^2(\RR^n)}\Big]\tilde{N}(\d s,\d z)\\
        &+\int_0^{t}\int_B  \Big[\frac12\|\Phi(z,u^{k}(s-)) \|^2_{L^2(\RR^n)}-\frac12\|u^{k}(s-)\|^2_{L^2(\RR^n)}\\
       &\hspace{4cm} +\Re\big{\langle}u^{k}(s), \i\sum_{j=1}^{m}z_{j}g_{j}(u^{k}(s))\big{\rangle}_{L^2(\RR^n)}\big]\nu(\d z)\d s.
  \end{align*}
 Let us observe that by \eqref{eqn-norm conservation}
  \begin{align}
            \|\Phi(z,u)\|^2_{L^2(\RR^n)}=\|u\|^2_{L^2(\RR^n)},\;\;\text{for all
} u \in L^2(\RR^n), \; z\in \mathbb{R}^m.
  \end{align}
  Since   $\Re\langle i u,u\rangle=0$ for $u \in \mathbb{C}$, we have
  \begin{align*}
  \Re\big{\langle} u^{k}(s),\i\sum_{j=1}^m z_jg_j(u^{k}(t))\big{\rangle}_{L^2(\RR^n)}=0.
  \end{align*}
  and
    \begin{align*}
  \Re\big{\langle} u^{k}(s),\i \theta_{k}(\|u^{k}\|_{Y_{s}}) \lambda |u^{k}(s)|^{2\sigma}u^{k}(s)\big{\rangle}_{L^2(\RR^n)}=0.
  \end{align*}
  Consequently, we obtain
  \begin{align*}
             \psi(u^{k}(t))-\psi(u_0)
        =0,
  \end{align*}
  which finishes the proof.

  \end{proof}


  We are finally ready to present the proof of Theorem \ref{main-theo}. Similar arguments of Schr\"odinger equations with gaussian noise are given in \cite{Bou+Deb1999} and \cite{Hornung2018}. Our approach is different from these papers as we introduce some stopping times which we think are essential in the proof.

  \begin{proof}[Proof of Theorem \ref{main-theo}]
  Let us choose and fix  $p\geq 2$, $0<\sigma<\frac{2}{n}$, $r=2\sigma+2$ such that    $(p,r)$ be an admissible pair, i.e. the conditions (\ref{eqn-admissible pair-01}-\ref{eqn-admissible pair-02}) are satisfied.    And finally    let us choose and fix  an initial data $u_0$ such that $u_0\in L^p(\Omega,\mathcal{F}_{0};L^2(\RR^n))$. Let us recall that $\lambda$ is the coefficient from equation \eqref{SNLSE-Marcus}.
    Let us  choose and fix $T>0$. It is sufficient to show that the stopping time $\tau_\infty$ from Proposition \ref{prop-local-mind-main} is equal, $\mathbb{P}$-almost surely,  to $T$.

  Let $(u^{k})_{k\in\mathbb{N}}$ be the sequence of solutions of \eqref{SEE-trucated-1} provided by Proposition \ref{prop-truncated equation} with $R$ replaced by $k$ as in Proposition \ref{prop-local-mind-main}.

  \emph{Step 1.} We first prove that $u^{k}$ is uniformly bounded in $L^p(\Omega,L^p(0,T;L^r(\mathbb{R}^n)))$, i.e. we will show that there exists a  constant $C>0$ such that
   \begin{align}\label{uni-bound-of-local-solution}
     \sup_k \E \|u^{k}\|^p_{L^{p}(0,T;L^r(\RR^n))}\leq C.
  \end{align}
Recall that by \eqref{eqn-Psi}  $u^k$ is given by
 \begin{align*}
     u^{k}(t)=S_tu_0+\Psi_1^{k}(u^{k})(t)+\Psi_2^{k}(u^{k})(t)+\Psi_3^{k}(u^{k})(t),\;\; t\in[0,T].
  \end{align*}
By Proposition \ref{prop-str-ineq-11}, Lemma \ref{lem-stoch-lip} and $L^{2}(\RR^{n})$-norm-preserving property in Proposition \ref{prop-L2-norm-preserved}, we have, for $q\geq p$,
  \begin{align}
  \begin{split}\label{th-proof-est-psi-2}
  \E \|\Psi_2^{k}(u_k)(\cdot)\|^q_{L^{p}(0,T;L^r(\RR^n))}
    &\leq C_{q}\, \EE\Big(\int_0^T \int_B \|\Phi(z,u^k(s))-u^k(s)\|_{L^2(\RR^{n})}^2\nu(\d z)\d s\Big)^{\frac{q}{2}}\\
  &\quad+C_{q}\,\EE\Big(\int_0^T \int_B\|\Phi(z,u^k(s))-u^k(s)\|_{L^2(\RR^{n})}^q\nu(\d z)\d s\Big)\\
   & \leq C_{q,m}(T+T^{\frac{q}{2}})\E\|u^{k}\|^q_{L^{\infty}([0,T];L^2(\RR^n))}
   =C_{q,m}(T+T^{\frac{q}{2}})\E\|u_0\|_{L^2(\RR^n)}^q<\infty.
  \end{split}
  \end{align}
   Let us fix $\omega\in \Omega$ and take $T_k(\omega)\in (0,T]$ whose value will be determined later on. Applying similar arguments as in Proposition \ref{prop_sto_stri_1}, \ref{prop_sto_stri_2} and \ref{prop_sto_stri_3}, we have
  \begin{align}
  &\hspace{3truecm} \|u^{k}\|^p_{L^{p}(0,T_k;L^r(\RR^n))}\nonumber\\
  &\leq \|S_tu_0\|^p_{L^{p}(0,T_k;L^r(\RR^n))}+\|\Psi_1^{k}(u^{k})\|^p_{L^{p}(0,T_k;L^r(\RR^n))}+\|\Psi_2^{k}(u^{k})\|^p_{L^{p}(0,T_k;L^r(\RR^n))}+\|\Psi_3^{k}(u^{k})\|^p_{L^{p}(0,T_k;L^r(\RR^n))}\nonumber\\
          & \leq C\|u_0\|^p_{L^2(\RR^n)}+ C|\lambda|T_k^{(1-\frac{n\sigma}{2})p}\|u^{k}\|^{p(2\sigma+1)}_{L^{p}(0,T_k;L^r(\RR^n))}+\|\Psi_2^{k}(u^{k})\|^p_{L^{p}(0,T_k;L^r(\RR^n))}
         +C_{m}T_k^p\|u_k\|^p_{L^{\infty}([0,T_k];L^2(\RR^n))}\nonumber\\
         &=C\|u_0\|^p_{L^2(\RR^n)}+ C|\lambda|T_k^{(1-\frac{n\sigma}{2})p}\|u^{k}\|^{p(2\sigma+1)}_{L^{p}(0,T_k;L^r(\RR^n))}+\|\Psi_2^{k}(u^{k})\|^p_{L^{p}(0,T_k;L^r(\RR^n))}
         +C_{m}T_k^p\|u_0\|^p_{L^2(\RR^n)}\nonumber\\
         &=\Big[\big(C+C_{m}T_k^p\big)\|u_0\|^p_{L^2(\RR^n)}+\|\Psi_2^{k}(u^{k})\|^p_{L^{p}(0,T_k;L^r(\RR^n))}\Big]+C|\lambda|T_k^{(1-\frac{n\sigma}{2})p}\|u^{k}\|^{p(2\sigma+1)}_{L^{p}(0,T_k;L^r(\RR^n))}\label{TH-ineq-equation}\\
         & \leq M_{k}^{T_k}(\omega)+C|\lambda|T_k^{(1-\frac{n\sigma}{2})p}\|u^{k}\|^{p(2\sigma+1)}_{L^{p}(0,T_k;L^r(\RR^n))},\nonumber
  \end{align}
  where
  \begin{align}\label{eqn-M_k^T_k}
  M_{k}^{T_k}(\omega):=\big(C+C_{m}T_k^p\big)\|u_0\|^p_{L^2(\RR^n)}+\|\Psi_2^{k}(u^{k})\|^p_{L^{p}(0,T_k;L^r(\RR^n))}.
  \end{align}
 Also let us denote
  \begin{align}\label{eqn-M_k(omega)}
  M_{k}(\omega):=\big(C+C_{m}T^p\big)\|u_0\|^p_{L^2(\RR^n)}+\|\Psi_2^{k}(u^{k})\|^p_{L^{p}(0,T;L^r(\RR^n))}.
  \end{align}
  Consider  a function $f$ defined by  $$f(x)=M+\frac{1}{4(2M)^{2\sigma}}x^{2\sigma+1}-x,\quad x\geq 0.$$
 Here $M>0$. Obviously  $f^{\prime\prime}(x)\geq 0$, for all $x\in [0,\infty)$, it
is convex on $[0,\infty)$. Notice also that
   \begin{align*}
   &   f(0)=M>0,\\
   &   f(2M)=M+\frac{1}{4(2M)^{2\sigma}}(2M)^{2\sigma+1}-2M=-\frac{M}{2}<0,\\
   &f\Big( 4^{\frac1{2\sigma} }\cdot 2M\Big)=M+\frac{1}{4(2M)^{2\sigma}}\Big(4^{\frac1{2\sigma} }\cdot 2a\Big)^{2\sigma+1}-4^{\frac1{2\sigma} }\cdot 2M=M>0.
   \end{align*}
By the intermediate value theorem, there exists two points $c_1,c_2$ with $0<c_1<2M<c_2<4^{\frac1{2\sigma} }\cdot 2M<\infty$ at which $f(c_1)=f(c_2)=0$. Therefore, we can conclude that
\begin{align}\label{TH-proof-property-f}
f(x)\geq 0 \text{ if and only if } 0\leq x\leq c_1\text{ or }x\geq c_2.
\end{align}
Recalling  that by assumptions $1-\frac{n\sigma}{2}>0$,  we choose  the required value $T_k(\omega)$, $\omega \in \Omega$,  to be
\begin{equation}\label{eqn-T_k(omega)}
  T_k(\omega)= T \wedge (4C|\lambda|(2M_k(\omega))^{2\sigma})^{-\frac1{p(1-\frac{n\sigma}{2})}}.
  \end{equation}
With this choice we infer that  $C|\lambda|T_k(\omega)^{(1-\frac{n\sigma}{2})p}\leq \frac{1}{4(2M_k(\omega))^{2\sigma}}$.
Note that the above definition of $T_k$ is equivalent to $T_k\leq T$ and
\[
C|\lambda|T_k^{(1-\frac{n\sigma}{2})p} (M_k)^{2\sigma} \leq \frac{1}{4 \times 2^{2\sigma}}.
\]
Replacing $M_k$ by $M_k^{T_k}$ and using \eqref{eqn-M_k^T_k}, the above means
\[
C|\lambda|T_k^{(1-\frac{n\sigma}{2})p} \Bigl[ \big(C+C_{m}T_k^p\big)\|u_0\|^p_{L^2(\RR^n)}+\|\Psi_2^{k}(u^{k})\|^p_{L^{p}(0,T_k;L^r(\RR^n))} \Bigr]^{2\sigma} \leq \frac{1}{4 \times 2^{2\sigma}}.
\]
This inequality suggests that we can replace $T_k$ which is not a stopping time by a stopping time $\sigma_k$ defined by
\begin{align}\label{eqn-sigma_k}
\sigma_k:= \inf\Bigl\{t\in [0,T]: C|\lambda|t^{(1-\frac{n\sigma}{2})p} \Bigl[ \big(C+C_{m}t^p\big)\|u_0\|^p_{L^2(\RR^n)}+\|\Psi_2^{k}(u^{k})\|^p_{L^{p}(0,t;L^r(\RR^n))} \Bigr]^{2\sigma} >  \frac{1}{4 \times 2^{2\sigma}} \Bigr\}.
\end{align}
Observe that $T_k\leq \sigma_k$. Clearly, \eqref{TH-ineq-equation} still holds with $T_k$ replaced by $\sigma_k$. That is
\begin{align*}
        \|u^{k}\|^p_{L^{p}(0,\sigma_k;L^r(\RR^n))}&\leq M_{k}^{\sigma_k}(\omega)+C|\lambda|\sigma_k^{(1-\frac{n\sigma}{2})p}\|u^{k}\|^{p(2\sigma+1)}_{L^{p}(0,\sigma_k;L^r(\RR^n))}\\
        &\leq M_{k}^{\sigma_k}(\omega)+ \frac{1}{4(2M_k^{\sigma_k})^{2\sigma}}\|u^{k}\|^{p(2\sigma+1)}_{L^{p}(0,\sigma_k;L^r(\RR^n))}.
\end{align*}
 Since $X_t:=\|u^{k}\|^p_{L^{p}(0,t;L^r(\RR^n))}$ is continuous in $t\in[0,\infty)$ and $X_0=0$,  by the property \eqref{TH-proof-property-f} of the function $f$ discussed above, we infer that
  \begin{align}\label{eqn-2M_k}
      &\|u^{k}\|^p_{L^{p}(0,\sigma_k;L^r(\RR^n))}\leq c_1 \leq 2M_k^{\sigma_k}(\omega).
  \end{align}
Substituting $M_k^{\sigma_k}$ into the above  gives
  \begin{align*}
      \|u_k\|^p_{L^{p}(0,\sigma_k;L^r(\RR^n))}\leq 2\big(C+C_{m}(\sigma_k)^p\big)\|u_0\|^p_{L^2(\RR^n)}+2\|\Psi_2^{k}(u^{k})(\cdot)\|^p_{L^{p}(0,\sigma_k;L^r(\RR^n))}.
  \end{align*}
Then we define a sequence $\sigma_k^j$ for $j=1, \cdots$ as follows. For $j=1$ we put $\sigma_k^1=\sigma_k$. For $j=2$ we define  $\sigma_k^2$ is the infimum of times after $\sigma_k^1$ such that the above condition holds but the initial time $0$ is replaced by the initial time $\sigma_k^1$. To be more precise, for $j=1,2,\cdots$, we define
\begin{align*}
\sigma_k^{j+1}:= \inf\Bigl\{&t\in[\sigma_k^j,T]: \\
&C|\lambda|(t-\sigma_k^j)^{(1-\frac{n\sigma}{2})p} \Bigl[ \big(C+C_{m}(t-\sigma_k^j)^p\big)\|u_k(\sigma_k^j)\|^p_{L^2(\RR^n)}+\|\Psi_2^{k}(u^{k})\|^p_{L^{p}(\sigma_k^j,t;L^r(\RR^n))} \Bigr]^{2\sigma} > \frac{1}{4 \times 2^{2\sigma}} \Bigr\}.
\end{align*}
By the definition of  $\sigma_k^j$, we infer $\sigma_k^{j+1}-\sigma_k^j \geq T_k$, for each $j$. Let $N=[\frac{T}{T_k}]$. Then we can see that $\sigma_k^{j}=T$, for $j=N+1,N+2,\cdots$.
Since $\sigma_k^j$ is a stopping time, the following equality holds   for $t\geq \sigma_k^j$,
\begin{align}
  \begin{split}\label{SEE-trucated-end}
     u^{k}(t)=&S_{t-\sigma_k^j}(u^k(\sigma_k^j))-\i\int_{\sigma_k^j}^tS_{t-s}(\theta_k(\|u^{k}\|_{Y_s})\big(\lambda |u^{k}(s)|^{2\sigma}u^{k}(s) \big)\d s\\
     &+\int_{\sigma_k^j}^t\int_BS_{t-s}\Big[\Phi(z,u^{k}(s-))-u^{k}(s-)\Big]\tilde{N}(\d s,\d z)\\
     &+\int_{\sigma_k^j}^t\int_BS_{t-s}\Big[\Phi(z,u^{k}(s))-u^{k}(s)+\i\sum_{j=1}^mz_jg_j(u(s))\Big]\nu(\d z)\d s.
     \end{split}
\end{align}
Hence,  applying similar arguments as above inductively gives,   for $j=1,\cdots,N$,
  \begin{align*}
    \|u_k\|^p_{L^{p}(\sigma_k^j,\sigma_k^{j+1};L^r(\RR^n))}&\leq 2\big(C+C_{m}(\sigma_k^{j+1}-\sigma_k^j)^p\big)\|u_k(\sigma_k^j)\|^p_{L^2(\RR^n)}+2\|\Psi_2^{k}(u^{k})(\cdot)\|^p_{L^{p}(\sigma_k^j,\sigma_k^{j+1};L^r(\RR^n))}\\
    &\leq 2 M_k(\omega),
  \end{align*}
  where we used the $L^{2}(\RR^{n})$-norm-preserving property of $u_k$ from Proposition \ref{prop-L2-norm-preserved}. Thus,
  \begin{align*}
   \|u_k\|^p_{L^{p}(0,T;L^r(\RR^n))}&\leq   \|u_k\|^p_{L^{p}(0,\sigma_k^{1};L^r(\RR^n))} +\sum_{j=1}^{N}  \|u_k\|^p_{L^{p}(\sigma_k^j,\sigma_k^{j+1};L^r(\RR^n))}
   \leq 2(\frac{T}{T_k}+1)M_k\\
   &= 2M_k+2T (4C|\lambda|(2M_k)^{2\sigma})^{\frac1{(1-\frac{n\sigma}{2})p}} M_k\\
   &=2M_k+C_{n,p,\lambda,\sigma,T} (M_k)^{\frac{4\sigma}{(2-n\sigma)p}+1}.
   \end{align*}
Put $\theta=\frac{4\sigma}{(2-n\sigma)p}+1$. By applying \eqref{th-proof-est-psi-2} with $q=p, p\theta$ respectively, we obtain
    \begin{align}
    \begin{split}\label{eq-unif-est}
      \E\|u^{k}\|^p_{L^{p}(0,T;L^r(\RR^n))}&\leq 2\E M_k+C_{n,p,\lambda,\sigma,T} \E (M_k)^{\theta}\\
      & \leq 2\big(C+C_{m}T^p\big)\E\|u_0\|^p_{L^2(\RR^n)}+2\E \|\Psi_2^{k}(u^{k})(\cdot)\|^p_{L^{p}(0,T;L^r(\RR^n))}\\
      &\quad+C_{n,p,\lambda,\sigma,T} \big(C+C_{m}T^p\big)^{\theta} \E\|u_0\|_{L^2(\R^n)}^{p\theta}+C_{n,p,\lambda,\sigma,T} \E \|\Psi_2^{k}(u^{k})(\cdot)\|^{p\theta}_{L^{p}(0,T;L^r(\RR^n))}\\
      &\leq
      C_{m,p,T}\E\|u_0\|_{L^2(\RR^n)}^p+C_{m,n,p,\lambda,\sigma,T}\E\|u_0\|_{L^2(\RR^n)}^{p\theta}
      \\
      & \leq C_{m,n,p,\lambda,\sigma,T}\Big(\E\|u_0\|_{L^2(\RR^n)}^p+\E\|u_0\|_{L^2(\RR^n)}^{\frac{4\sigma }{2-n\sigma}+p}\Big) :=\rho.
      \end{split}
  \end{align}
  Here the constant $\rho$ depends on $T$ and $\|u_0\|_{L^2
  (\R^n)}$ but is independent of $k$. Hence we proved \eqref{uni-bound-of-local-solution}.

 \emph{ Step 2. } Recall from Proposition \ref{prop-local-mind-main} that  $\tau_{k}$ is defined by   $
        \tau_k=\inf\{t\in[0,T]:\|u^{k}\|_{Y_{t}}> k\}$.
  By using the result we obtained in the first step, we deduce that
  \begin{align*}
  \mathbb{P}(\tau_k=T)&= \mathbb{P}\{ \sup_{0\leq t\leq T}\|u^{k}(t)\|_{L^2(\RR^n)} + \|u^{k}\|_{L^p(0,T;L^r(\RR^n))} \leq k\}\\
  &\geq 1-\frac{2^{p}\E\sup_{0\leq t\leq T} \|u^{k}(t)\|^{p}_{L^2(\RR^n)}+2^{p}\E  \|u^{k}\|^{p}_{L^p(0,T;L^r(\RR^n))}}{k^{p} }\\
  &\geq 1- \frac{2^{p}\E \|u_{0}\|^{p}_{L^{2}(\RR^{n})}+2^{p}\rho}{k^{p}}.
  \end{align*}
Hence we have
\begin{align*}
 \mathbb{P}(\tau_{\infty}=T)\geq \mathbb{P}\Big(\cup_{k}\{\tau_k=T\}\Big)=\lim_{k\rightarrow\infty}\mathbb{P}(\tau_k=T)=1.
\end{align*}
Thus we infer that $\tau_\infty \geq T$,  $\mathbb{P}$-almost surely. Since $T$ was arbitrary,
this shows $u(t)$, $t\in[0,\infty)$ is a global mild solution.
Moreover, by Proposition \ref{prop-truncated equation}, \ref{prop-local-mind-main} and \ref{prop-L2-norm-preserved}, $\mathbb{P}$-a.s.  the following holds:
 $\|u(t)\|_{L^2(\RR^n)}=\|u_0\|_{L^2(\RR^n)}$ for all $t\in [0,\infty)$,   $u\in D([0,\infty);L^2(\RR^n))$
and, for every $T>0$, $u\in L^p\big(\Omega;L^{\infty}(0,T;L^2(\RR^n))\cap L^p(0,T;L^r(\RR^n))\big)$.

  \end{proof}


  \begin{acknowledgements} A very preliminary version of this paper, containing Proposition \ref{prop-str-ineq-11}, about the stochastic Strichartz estimates) was presented by the first named author at the workshop ``Stochastic Analysis, L\'evy processes and (B)SDEs'' held in Innsbruck in October 2011 and organized by C. Geis, S. Geis and E. Hausenblas. He would like to thank the organizers for their kind invitation.
  \end{acknowledgements}

\end{document}